\newcommand{\diff}{\mathrm{d}}
\crefname{hypothesis}{Hypothesis}{Hypotheses}
\title{Non-uniform Observability for Moving Horizon Estimation and stability with respect to additive perturbation}
\author{Emilien  Flayac \thanks{Complex System Engineering Department, ISAE-Supaero, 31000 Toulouse, France.
  (\email{emilien.flayac@isae-supaero.fr}). Emilien Flayac was a postdoctoral fellow at the University of Melbourne when the reseach of this paper was developped}
\and Iman Shames\thanks{CIICADA Lab, School of Engineering,  Australian National University, Acton ACT 0200, Australia.
  (\email{iman.shames@anu.edu.au}).}}
\begin{document}

\maketitle

% REQUIRED
\begin{abstract}

 This paper formalises the concepts of weakly and weakly regularly persistent input trajectory as well as their link to the Observability Grammian and the existence and uniqueness of solutions of Moving Horizon Estimation (MHE) problems. Additionally, thanks to a new time-uniform Implicit Function Theorem, these notions are proved to imply the stability of MHE solutions with respect to small additive perturbation in the measurements and in the dynamics, both uniformly and non-uniformly in time. Finally, examples and counter-examples of weakly persistent and  weakly regularly persistent input trajectories are given in the case of 2D bearing-only navigation.

%  Additionally, sufficient conditions of weak regular observability are provided in a problem of Simultaneous Localisation and Mapping (SLAM) for different measurement models. In particular it is shown that following circular trajectories leads to weak regular observability  in a second order 2D SLAM problem with several possible types of sensors. 
\end{abstract}

% REQUIRED
\begin{keywords}
Nonlinear Observability, Persistent input, Moving Horizon Estimation, Implicit Function Theorem, Stability of solutions.
\end{keywords}

% REQUIRED
\begin{AMS}
	93B07,	93B99, 26B10,  	90C31.  
\end{AMS}

\section{Introduction}
In tackling nonlinear estimation problems using the machinery of mathematical optimisation, two ideas prevail. The most straightforward one is to define a cost on the complete sequence of past inputs and outputs and to estimate the associated state trajectory by minimising that cost over state trajectories. The estimator is then built from the resulting optimal state trajectory. This leads to Full Information Estimation (FIE). To reduce the computational cost and memory usage, another idea is to use a truncated version of the input/output sequence on a time window of fixed length and to keep the optimal state trajectories on this moving horizon. This leads to Moving Horizon Estimation (MHE). See Chapter 4 of \cite{rawlings_model_2020} for a general survey on these techniques.
In the classical literature on FIE and MHE, robust stability of the estimation error is usually proved under observability or detectability assumptions.  For example, in \cite{alessandri_moving-horizon_2008,michalska_moving_1995,rao_constrained_2003}, the stability of MHE schemes has been shown by assuming the so-called \emph{$N$-step observability} property. This assumption means that on a moving time window in a discrete-time framework, small errors between output trajectories must imply small errors in the initial states,  for any pair of initial states and  uniformly  with respect to the control input.  In \cite{allan_robust_2021, hu_robust_2017,  knuefer_nonlinear_2021,knufer_time-discounted_2020,muller_nonlinear_2017}, the FIE and MHE estimators are proved to be Robustly Globally Asymptotically Stable  under several versions of \emph{incremental input/output-to-state stability} (i-IOSS).  It can be interpreted as a robust detectability condition of any initial conditions in the presence of process noise,  measurement noise and/or control input. Note that in the above mentioned works, the comparison functions used to characterise the  i-IOSS property are again independent of any control input which means that uniform detectability is assumed. Global stability of classical FIE and MHE schemes require global solutions of the optimal estimation problem which may not be achievable in a general nonlinear case. This remark has notably been made in \cite{alamir_optimization_1999,alamir_further_2002,alessandri_fast_2017,morari_efficient_2009,kang_moving_2006,wynn_convergence_2014} where one only searches for state trajectories that are locally optimal.
%
% One focuses on trajectories that are `close' to the true one by considering that the initial estimation error is small. However, with online computational constraints, computing local solution to the MHE problem can also be intractable. This has led to the concept of fast or approximate MHE. The main idea is to solve the underlying optimisation problem in MHE schemes partially by only computing a few iterations of a numerical optimisation routine at each time step. 
%
A direct consequence of this restriction is that one does not need to be able to distinguish all the states from each other but only those close to the current state.  This means that the required observability conditions can be weakened accordingly. For instance, in \cite{wynn_convergence_2014}, a version of the $N$-step observability property localised around the actual state of the system is used to show the convergence of an approximate MHE scheme. These weaker assumptions are again made uniformly with respect to the control input. This suggests that the impact of the input trajectory on the  performance of the MHE scheme is overlooked. Nevertheless, it is known that general nonlinear observability properties of nonlinear controlled systems cannot be stated independently of the input, see \cite{besancon_nonlinear_2007}. In particular, some input trajectories might prevent the system from satisfying the $N$-step observability property. In this regard, the notion of regularly persistent input trajectories happens to be very useful, particularly, in the design of global observers for state-affine systems, see \cite{besancon_nonlinear_2007}. It defines a class of input trajectories in a continuous time framework that forces the system to satisfy the equivalent of the $N$-step observability property on the whole statespace. However, this property is so strong that such input trajectories might not exist. It is also unnecessary in many applications of MHE, as mentioned before. That is why, the first two contributions of this paper are to bring to light the links between classical nonlinear observability concepts and the problems of FIE and MHE and to formalise and characterise the new concepts of \emph{weakly} and \emph{weakly regularly persistent} input trajectories using the Observability Grammian.
They are written in the language of classical nonlinear observability theory in continuous time and provide a new framework for the study of existence, local uniqueness and stability of local solutions of MHE problems. More precisely, as the third and main contribution of this paper, we show that weakly and weakly regularly persistent input trajectories ensure that MHE problems still have locally unique local solutions close to the true state in the presence of small arbitrary bounded additive perturbation in the measurements and  in the dynamics. These results involve a new time-uniform Implicit Function Theorem in Banach spaces. Finally, we provide examples and counter-examples of weakly and weakly regularly persistent  input trajectories for a two-dimensional bearing-only system. In particular, we show that there exist weakly persistent input trajectories that do no satisfy our sufficient conditions for  weak regular persistence based on the Observability Grammian.

The rest of the paper is organized as follows. In Section \ref{sec:observability}, the standard nonlinear observability concepts are recalled. In Section \ref{sec:obs_opti}, explicit connections between these observability notions and optimisation concepts are established. In Section \ref{sec:weak_per}, the notions of weakly and weakly regularly persistent input trajectories are introduced and characterised through the Observability Grammian and the stability of the solution of a perturbed MHE problem based on a implicit function theorem for sequences of solutions of a smooth nonlinear equation is also proved. Finally, in Section \ref{sec:example_loc}, examples of weakly and weakly regularly persistent input trajectories are given in the case of bearing-only localisation in order to demonstrate the relevance of the proposed observabilty notions.

% In Section \ref{sec:gen_SLAM}, sufficient conditions for the weak regular observability of a SLAM problem are presented. Finally, a sufficient condition for input trajectories to be jointly weakly regularly persistent for second-order SLAM problems with different sensors is presented in Section \ref{sec:common_input}. 

\section{Observability properties of general nonlinear controlled systems}\label{sec:observability}
This section is dedicated to the presentation of classical nonlinear observability concepts.
\subsection{Setup and classical nonlinear observability notions} \label{sec_class_nonlinear_obs}
To begin with, several well-known observability concepts are recalled from \cite{besancon_nonlinear_2007}. In the following, we denote by $\mathbb{N}$ the set of positive integers and by $\mathbb{R}^+$ the set of non-negative real numbers. We fix $(n_x,n_u,n_y)\in \mathbb{N}^3$.  We consider the following general nonlinear system:
   \begin{align}
       \dot{x}&=f(x,u), \label{eq:general_dyn_continous_time}\\
        y&=h(x,u)\notag,
   \end{align}
   where
   \begin{itemize}
   \item $u:\mathbb{R}^+ \longrightarrow U\subset \mathbb{R}^{n_u}$ is a piece-wise continuous input trajectory, $x$ is the corresponding state trajectory valued in $\mathbb{R}^{n_x}$ and $y$ the corresponding measurement (or output) trajectory valued in $\mathbb{R}^{n_y}$;
    \item  $f: \mathbb{R}^{n_x}\times \mathbb{R}^{n_u} \longrightarrow \mathbb{R}^{n_x}$ is the controlled vector field of the system and $h: \mathbb{R}^{n_x}\times \mathbb{R}^{n_u}\longrightarrow \mathbb{R}^{n_y}$ is the observation function, also called  output function. Mappings $f$ and $h$ are both assumed to be twice continuously differentiable.
   \end{itemize}
For simplicity, the solutions of system  \eqref{eq:general_dyn_continous_time} are supposed to be uniquely defined at all times. For $s_2\geq s_1\geq 0$, and $\xi \in \mathbb{R}^{n_x}$, we denote by $\phi_f(s_2;s_1,\xi,u)$ the solution flow of system \eqref{eq:general_dyn_continous_time} at time $s_2$ with initial condition $\xi$, initial time $s_1$ and input trajectory $u$. Let $x_0 \in \mathbb{R}^{n_x}$ be a fixed initial condition and $t_0=0$ be the reference initial time. In the following, the reference trajectory is defined, for some input trajectory $u$, by:
\begin{align}
    x(t):=\phi_f(t;0,x_0,u).\label{eq:reference_traj}
\end{align}
The property of observability of a system is defined as one's ability to distinguish between two initial conditions using only an input trajectory and the corresponding output trajectories. The definitions of distinguishable and indistinguishable pairs are recalled in Definition \ref{def:distinguishability}.

\begin{definition}[Distinguishability] 
\label{def:distinguishability} Let $u$ be an input trajectory.
A pair $(\xi_1,\xi_2)\in \mathbb{R}^{n_x}\times\mathbb{R}^{n_x}$ is said to be \emph{distinguishable using the input trajectory $u$} if there exists $t\geq0$ such that:
\begin{align*}
    h(\phi(t,0,\xi_1,u),u(t))\neq h(\phi(t,0,\xi_2,u),u(t)).
\end{align*}
A pair $(\xi_1,\xi_2)$ is said to be \emph{distinguishable} if there exists an input trajectory $u$ such that $(\xi_1,\xi_2)$ is distinguishable using the input trajectory $u$. If  $(\xi_1,\xi_2)$ is distinguishable (resp. using input trajectory $u$) then it is also said that $\xi_1$ \emph{is distinguishable from} $\xi_2$  (resp. using input trajectory $u$). If $(\xi_1,\xi_2)$  is not distinguishable, then it is said to be \emph{indistinguishable}.
\end{definition}

Therefore, observable systems are such that every initial state can be distinguished from the other states.

\begin{definition}[Observability]\label{def:observability} System \eqref{eq:general_dyn_continous_time} is said to be \emph{observable at $x_0\in \mathbb{R}^{n_x}$ } if for any $\xi \in \mathbb{R}^{n_x}$, $\xi$ is distinguishable from $x_0$. System \eqref{eq:general_dyn_continous_time} is said to be \emph{observable}  if for any $(\xi_1,\xi_2)\in (\mathbb{R}^{n_x})^2$, the pair $(\xi_1,\xi_2)$ is distinguishable.
\end{definition}
Note that, contrary to linear systems, observability of nonlinear systems depends  on input trajectories. In fact, observability as defined in Definition \ref{def:observability} requires the existence of an input trajectory for any pair of states in the statespace, that enables one to discriminate them. This makes  observability a strong property that might not be satisfied by a large class of systems. This justifies the introduction of the concept of weak observability where one focuses on a neighbourhood of some state.
\begin{definition}[Weak observability]\label{def:weak_obs}The system \eqref{eq:general_dyn_continous_time} is said to be \emph{weakly observable at $x_0$} if there exists an input trajectory $u$ and a neighbourhood, $\mathbb{U}$, of $x_0$ such that for any $\xi \in \mathbb{U} \backslash \{x_0\}$, there exists $t\geq0$ such that:
   \begin{align*}
        h(\phi_f(t;0,x_0,u),u(t))&\neq h(\phi_f(t;0,\xi,u),u(t)).
   \end{align*}
 The system \eqref{eq:general_dyn_continous_time} is said to be \emph{weakly observable} if it is  weakly observable at $x_0$ for any $x_0 \in \mathbb{R}^{n_x}$.
   \end{definition}

A slightly stronger concept of observability is used when one also needs to distinguish a pair of states instantly that is to say by staying close to the initial condition. For this reason, the notion of local weak observability has been introduced in \cite{hermann_nonlinear_1977}. Its definition is recalled in Definition \ref{def:local_weak_obs}.

  \begin{definition}[Local weak observability]\label{def:local_weak_obs}
   The system \eqref{eq:general_dyn_continous_time} is said to be \emph{locally weakly observable at $x_0$} if there exists an input trajectory $u$ and a neighbourhood, $\mathbb{U}$, of $x_0$ such that for any neighbourhood, $\mathbb{V}\subset\mathbb{U}$, of $x_0$ and any $\xi \in \mathbb{V} \backslash \{x_0\}$, there exists $t\geq0$ such that:
   \begin{align*}
        h(\phi_f(t;0,x_0,u),u(t))&\neq h(\phi_f(t;0,\xi,u),u(t)),\\
         \phi_f(t;0,\xi,u)&\in \mathbb{V}.
   \end{align*}
   
     The system \eqref{eq:general_dyn_continous_time} is said to be \emph{locally weakly observable} if it is locally weakly observable at $x_0$ for any $x_0 \in \mathbb{R}^{n_x}$.
   \end{definition}
In Definition \ref{def:local_weak_obs}, the term `weak' specifically refers to the fact that one is trying to distinguish between states that are near $x_0$ while the term `local' means that one is able to use arbitrarily short state trajectories to do so. Thus, local weak observability at some initial condition $x_0$ means that $x_0$ can be distinguished from its neighbours using the input and output trajectories corresponding to state trajectories $x$ that stay close to $x_0$.  Its main interest is that it can be checked using a rank condition on the Lie derivatives of $h$ along the vector fields defined by $f$. See \cite{besancon_nonlinear_2007} for more details.

Note that in Definition \ref{def:observability}, \ref{def:weak_obs} and \ref{def:local_weak_obs}, an element of the statespace is fixed and one focuses on the existence of an input trajectory that allows one to distinguish this element from others.  There exists another take on observability where one fixes a control trajectory and wonders if it can be used to distinguish between every pair of states. Such input trajectories are called  universal input trajectories.

% However, local weak observability allows several troublesome scenarios. For example, it does not capture the necessity in some systems to go far from the initial condition to be able to distinguish two initial conditions. It does not prevent the existence of input trajectories that make pairs of state indistinguishable after some time either.  See again \cite{besancon_nonlinear_2007-1} for examples.  To avoid the latter case, we usually define the notion of universal input trajectories.
\begin{definition}[Universal input]
   \label{def:universal_input}
   For $t\geq0$, an input trajectory  $u$  is a \emph{universal input trajectory on $[0,t]$} if for any $  \xi_1\neq \xi_2$,  there exists $ s \in [0,t]$  such that
   \begin{align*}
       h(\phi_f(s;0,\xi_1,u),u(s))\neq h(\phi_f(s;0,\xi_2,u),u(s)).
   \end{align*}
   An input trajectory is said to be a \emph{universal input trajectory} if there exists $t\geq0$ such that it is  a universal input trajectory on $[0,t]$. System \eqref{eq:general_dyn_continous_time} is said to be \emph{uniformly observable} if all input trajectories are universal.
     \end{definition}
   In the following, we focus on integral formulations of observability as they typically provide more quantitative notions. This leads to the definition of the cumulative output error.
   
   \begin{definition}[Cumulative output error]\label{def:cum_error_obs}
    For $0\leq t_1\leq t_2$, an input trajectory $u$ and a pair of states $(\xi_1,\xi_2)$ we define the \emph{cumulative output  error} of system \eqref{eq:general_dyn_continous_time} on $[t_1,t_2]$ at $(\xi_1,\xi_2)$ with input trajectory $u$, denoted by $l(t_1,t_2,\xi_1,\xi_2,u)$, as follows:
    \begin{align*}
        &l(t_1,t_2,\xi_1,\xi_2,u)=\int_{t_1}^{t_2}{\Vert h(\phi_f(s;t_1,\xi_1,u),u(s))- h(\phi_f(s;t_1,\xi_2,u),u(s))\Vert}^2\mathrm{d}s,
    \end{align*}
    where $\Vert \cdot \Vert$ denotes the Euclidian norm.
   \end{definition}
   
  Thus, from Definition \ref{def:universal_input}, one can derive an equivalent integral characterization of universal input trajectories.
   \begin{proposition} \label{prop_universal_input}
   An input trajectory $u$ is universal
   if and only if for any $ \xi_1\neq \xi_2$, there exists $t\geq0$ such that: 
   \begin{align}
        l(0,t,\xi_1,\xi_2,u)>0.\label{eq:integral_universal_input}
   \end{align}
   \end{proposition}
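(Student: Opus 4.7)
The plan is to fix a pair $\xi_1\neq \xi_2$ and reduce both directions to analysing the non-negative integrand
\begin{equation*}
g(s) := \Vert h(\phi_f(s;0,\xi_1,u),u(s)) - h(\phi_f(s;0,\xi_2,u),u(s))\Vert^2.
\end{equation*}
Since $h$ is $C^2$, each map $s\mapsto \phi_f(s;0,\xi_i,u)$ is continuous as the solution of \eqref{eq:general_dyn_continous_time} driven by a piecewise continuous input, and $u$ itself is piecewise continuous, so $g$ is piecewise continuous and non-negative on $\mathbb{R}^+$ with at most finitely many discontinuity points on any bounded interval.

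For the forward direction $(\Rightarrow)$, I would assume $u$ is universal and fix $\xi_1\neq \xi_2$. By Definition \ref{def:universal_input} there exist $t\geq 0$ and $s_0\in [0,t]$ with $g(s_0)>0$. It then suffices to exhibit a subset of $[0,t]$ of positive Lebesgue measure on which $g$ is strictly positive, which yields $l(0,t,\xi_1,\xi_2,u)=\int_0^t g(s)\,\mathrm{d}s>0$. If $s_0$ lies in the interior of a continuity interval of $u$, this is immediate from continuity of $g$ at $s_0$; if $s_0$ is one of the finitely many jump points of $u$, I would combine the one-sided limit of $u$ at $s_0$ with continuity of $h$ and of the flow $\phi_f(\cdot;0,\xi_i,u)$ to deduce right- or left-continuity of $g$ at $s_0$, and hence $g>0$ on a one-sided neighbourhood of $s_0$.

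For the reverse direction $(\Leftarrow)$, I would assume that for every $\xi_1\neq \xi_2$ there is $t\geq 0$ with $l(0,t,\xi_1,\xi_2,u)>0$. Because $g\geq 0$ is piecewise continuous on $[0,t]$, positivity of the integral is incompatible with $g$ vanishing off its finite discontinuity set; hence there exists $s\in[0,t]$ with $g(s)>0$, which is exactly the distinguishability inequality in Definition \ref{def:universal_input}.

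The main (and arguably only) delicate point is the treatment of jump discontinuities of $u$ in the forward direction: one has to rule out the pathological scenario in which $g$ is positive only at an isolated point $s_0$ where $u(s_0)$ differs from both its one-sided limits. This is handled by adopting the standard convention that at every jump point $u(s_0)$ coincides with either its right or left limit, so that $g$ inherits one-sided continuity at $s_0$ and strict positivity propagates to a one-sided open neighbourhood of positive measure.
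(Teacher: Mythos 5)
Your proposal is correct and follows essentially the same route as the paper, which argues in one line that for a non-negative, piecewise continuous integrand the integral $\int_0^t g(s)\,\mathrm{d}s$ vanishes if and only if $g$ vanishes identically on $[0,t]$, and then reads off the equivalence with Definition \ref{def:universal_input}. Your extra care at the jump points of $u$ (fixing a convention so that $g$ is one-sided continuous there) is a refinement the paper leaves implicit rather than a different argument.
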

   \begin{proof}
   Since $u$ is assumed to be piece-wise continuous and $h$ is continuous, for any $ \xi_1\neq \xi_2$ and $t\geq0$, $\int_0^t{\Vert h(\phi_f(s;0,\xi_1,u),u(s))- h(\phi_f(s;0,\xi_2,u),u(s))\Vert}^2\mathrm{d}s=0$ if and only if for any $s\in [0,t]$, $h(\phi_f(s;0,\xi_1,u),u(s))=h(\phi_f(s;0,\xi_2,u),u(s))$. The result follows from this.
   \end{proof}

% The main addition in Definition \ref{def:universal_input} compared to Definition \ref{def:distinguishability}  is that a universal input trajectory allows one to distinguish \emph{every} pair of states.

In theory, when a universal input trajectory is available, it should be possible to reconstruct the state of the system at anytime if one waits for a sufficiently long time. However, in practice, one would like to know an upper bound on the time required to distinguish states using some input trajectory. We first recall the classical definition of $\mathcal{K}$-functions.
   \begin{definition}[ $\mathcal{K}$-function]
   A function $\kappa: \mathbb{R}^+\rightarrow \mathbb{R}$ is said to be a \emph{$\mathcal{K}$-function} if and only if it is continuous, strictly increasing and satisfies $\kappa(0)=0$.
   \end{definition}

This leads to the definition of persistent input trajectories.

\begin{definition}[Persistent input] \label{def:persistent_input} An input trajectory $u$ is said to be \emph{persistent} if and only if there exists $T>0$  such that,  for any $t\geq T$ there exists a $\mathcal{K}$-function, $\kappa_t$, such that for any $(\xi_1,\xi_2)\in (\mathbb{R}^{n_x})^2 $:
\begin{align}
   l(t-T,t,\xi_1,\xi_2,u)\geq\kappa_t(\Vert \xi_1-\xi_2\Vert).\label{eq:integral_persistent_input}
%   \intertext{Or equivalently, for $t\geq T$:}
%     l(t-T,t,\xi_1,\xi_2,u)= \int_{t-T}^{t}{\Vert  h(\phi_f(s;t-T,\xi_1,u),u(s))- h(\phi_f(s;t-T,\xi_2,u),u(s))\Vert}^2\mathrm{d}s\geq\kappa(\Vert \xi_1-\xi_2 \Vert),
\end{align}
\end{definition}
Note that the definition of persistent input trajectories using a  $\mathcal{K}$-function differs from the one in \cite{besancon_nonlinear_2007} but they can be shown to be equivalent thanks to Lemma \ref{lem:kappa_cont_pos_def}.
 \begin{lemma}{[Lemma  4.3, \cite{khalil_nonlinear_2002}]}\label{lem:kappa_cont_pos_def}
 Let $n\in\mathbb{N}$ and $F:\mathbb{R}^n\rightarrow\mathbb{R}$ be a continuous function such that $F(0)=0$ and  for any $\xi \in \mathbb{R}^n\backslash\{0\}$, $F(\xi)>0$ then there exists a $\mathcal{K}$-function $\kappa$ such that for any $\xi \in \mathbb{R}^n$:
 \begin{align*}
     F(\xi)\geq \kappa(\Vert \xi \Vert).
 \end{align*}
 \end{lemma}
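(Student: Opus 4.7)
The plan is to reduce $F$ to a one-variable radial lower bound and then polish it into a $\mathcal{K}$-function.

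First, I would define $g:\mathbb{R}^+\to \mathbb{R}^+$ by $g(r):=\min_{\|\xi\|=r} F(\xi)$. Continuity of $F$ and compactness of each sphere guarantee that the minimum is attained, so $g$ is well defined with $g(0)=0$ and $g(r)>0$ for $r>0$. A standard compactness argument makes $g$ continuous: for $r_n\to r$ a bounded sequence of minimizers $\xi_n$ has a limit point $\xi$ with $\|\xi\|=r$, giving $g(r)\le \liminf g(r_n)$; the reverse inequality follows by radially rescaling an optimizer for $g(r)$ via $\xi\mapsto (r_n/r)\xi$. By construction $F(\xi)\ge g(\|\xi\|)$ for every $\xi$.

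Second, I would extract a non-decreasing continuous minorant $\alpha$ of $g$, for instance by taking $\alpha(r):=\inf_{s\in[r,\bar r]} g(s)$ on the radius range of interest, so that $\alpha(0)=0$, $\alpha(r)>0$ for $r>0$, and $\alpha$ is non-decreasing (and continuous by continuity of $g$). Finally, I would set
\begin{equation*}
\kappa(r) := \alpha(r)\cdot \frac{r}{1+r}.
\end{equation*}
Then $\kappa$ inherits continuity from $\alpha$, vanishes at $0$, and is strictly increasing because the factor $r/(1+r)$ is strictly increasing and $\alpha$ is non-decreasing and positive on $(0,\infty)$. Since $\kappa(r)\le \alpha(r)\le g(r)\le F(\xi)$ whenever $\|\xi\|=r$, $\kappa$ is the desired $\mathcal{K}$-function.

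The main subtleties are the continuity of the radial infimum $g$, which relies on the compactness-plus-rescaling argument, and the upgrade from a merely non-decreasing minorant to a genuinely strictly increasing, continuous, positive-on-$(0,\infty)$ function that still lies below $g$. The multiplicative damping by $r/(1+r)$ is the cleanest way to enforce strict monotonicity without destroying the pointwise lower bound, and it avoids having to differentiate or smooth $\alpha$ directly.
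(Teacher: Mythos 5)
The paper offers no proof of this lemma---it is quoted from Khalil---so your argument is the only one on the table, and its core construction (the radial minimum $g$, a running infimum to force monotonicity, multiplication by $r/(1+r)$ to force \emph{strict} monotonicity) is the standard and correct one on any compact range of radii. The gap is hiding in the phrase ``on the radius range of interest''. The statement to be proved is global: $F(\xi)\geq\kappa(\Vert\xi\Vert)$ must hold for all $\xi\in\mathbb{R}^n$ with $\kappa$ strictly increasing on all of $[0,+\infty)$. With a finite $\bar r$, your $\alpha(r)=\inf_{s\in[r,\bar r]}g(s)$ only minorizes $g$ on $[0,\bar r]$, so the final inequality is only obtained for $\Vert\xi\Vert\leq\bar r$; with $\bar r=+\infty$, the infimum can be identically zero. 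Neither choice yields the global claim, and no repair can: the global statement is false as written. Take $F(\xi)=\Vert\xi\Vert^2/(1+\Vert\xi\Vert^4)$, which is continuous, vanishes only at $0$ and is positive elsewhere, yet $g(r)=\min_{\Vert\xi\Vert=r}F(\xi)=r^2/(1+r^4)\to0$ as $r\to\infty$, whereas any $\mathcal{K}$-function satisfies $\kappa(r)\geq\kappa(1)>0$ for $r\geq1$. (Khalil's Lemma 4.3 is stated on a ball $B_r$; the global version with a class-$\mathcal{K}_\infty$ lower bound requires radial unboundedness.)

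The compact-range version you actually establish is, however, exactly what the paper needs: every invocation of the lemma is on a closed ball $\widebar{B}(x(t-T),R_t)$ (e.g.\ in the proof of Proposition \ref{prop:local_solution_weak_per_input}), where the argument $\Vert\xi_1-\xi_2\Vert$ ranges over a compact interval $[0,\bar r]$ with $\bar r=2R_t$, and your $\kappa$ can be extended beyond $\bar r$ arbitrarily (say by $\kappa(\bar r)+(r-\bar r)$) since only $r\leq\bar r$ ever occurs. To make your proof match the stated lemma you would need either to restrict the statement to a compact set, or to add a hypothesis such as $\liminf_{\Vert\xi\Vert\to\infty}F(\xi)>0$, under which $\alpha(r):=\inf_{s\geq r}g(s)$ is a genuinely positive, non-decreasing, continuous minorant and the remainder of your argument---including the $r/(1+r)$ damping, whose strict-monotonicity verification is correct---goes through verbatim.
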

%  \begin{proof}
%  See Lemma 4.3 in \cite{khalil_nonlinear_2002}
%  \end{proof}

Persistent input trajectories allows one to distinguish every state during a time window of bounded length. In other words, one is then able to distinguish every pair of states without having to wait for more than a time span of length $T$. However, this property is not time-invariant. In some cases, $\kappa_t(\Vert \xi_1-\xi_2\Vert)$ might vanish as $t \rightarrow +\infty$ for fixed $\xi_1$ and $\xi_2$ making the system potentially less and less observable along the state trajectory. This can be avoided by considering a time invariant equivalent namely the regular persistence input trajectories.
%  The states would then be harder and harder to distinguish as time goes.
\begin{definition}[Regularly persistent input] \label{def:regularly_persistent_input} An input trajectory $u$ is said to be \emph{regularly persistent} if and only if there exist $ T>0$ and  a  $\mathcal{K}$-function $\kappa$ such that for any $t\geq T$ and any  $ \xi_1\neq \xi_2 $:
\begin{align}
   l(t-T,t,\xi_1,\xi_2,u)\geq\kappa(\Vert \xi_1-\xi_2 \Vert).\label{eq:integral_regularly_persistent_input}
%   \intertext{Or equivalently, for $t\geq T$:}
%     l(t-T,t,\xi_1,\xi_2,u)= \int_{t-T}^{t}{\Vert  h(\phi_f(s;t-T,\xi_1,u),u(s))- h(\phi_f(s;t-T,\xi_2,u),u(s))\Vert}^2\mathrm{d}s\geq\kappa(\Vert \xi_1-\xi_2 \Vert),
\end{align}

\end{definition}

\begin{remark}

 In Definitions \ref{def:cum_error_obs}, \ref{def:persistent_input} and \ref{def:regularly_persistent_input}, weighted Euclidian norms could be considered. This would lead to equivalent definitions because of the inequality relationships between weighted Euclidian norms and the standard one. Thus, it is without loss of generality that we limit our discussion to the standard Euclidian norm.
 \end{remark}

 It is very common to look for estimators that minimise the cumulative output error, see Chapter 4 of \cite{rawlings_model_2020} for a general review and analysis on the topic. As a consequence, the first contribution of this paper is to provide an interpretation of the previously stated integral observability  definitions in terms of optimization notions. This is the topic of next section.

% 
% \section{Observability, identifiability and optimization-based estimation}

\section{Observability and optimization-based estimation}\label{sec:obs_opti}

Optimization-based estimation aims to build estimators by minimizing a cost that depends on the input and output trajectories on some time interval. In this paper, we focus on this cost being the cumulative output error. One of the main theoretical issue in the deterministic setting is to ensure that the potential multiple solutions of the resulting optimization problems coincide locally or globally with the reference trajectory. In this section, we link the classical nonlinear observability concepts to Full Information and Moving Horizon Estimation.

To avoid confusion with the several definitions of observability stated above, we recall the definition of several concepts of solution of an optimization problem.

\begin{definition}
Let $F:\mathbb{R}^{n_x}\longrightarrow \mathbb{R}$. Consider the  optimization problem:
\begin{align}
\displaystyle \inf_{\xi\in \mathbb{R}^{n_x}} F(\xi).
\label{pb:gen_opt}
\end{align}
It is said that $\xi^* \in \mathbb{R}^{n_x}$ is a \emph{global solution} of Problem \eqref{pb:gen_opt} if for any $\xi \in \mathbb{R}^{n_x}$, $F(\xi^*)\leq F(\xi)$. It is said that $\xi^* \in \mathbb{R}^{n_x}$ is a \emph{local solution} of Problem \eqref{pb:gen_opt} if there exists a neighbourhood, $\mathbb{U}$, of $\xi^*$ such that for any $\xi \in \mathbb{U} $, $F(\xi^*)\leq F(\xi)$. It is said that $\xi^* \in \mathbb{R}^{n_x}$ is a \emph{strict local solution} of Problem \eqref{pb:gen_opt} if there exists a neighbourhood of $\xi^*$, $\mathbb{U}$, such that for any $\xi \in \mathbb{U}\backslash\{\xi^*\} $, $F(\xi^*) < F(\xi)$.
\end{definition}
% If Problem \eqref{pb:gen_opt} admits a global solution then 'inf' is replaced by 'min'.
% In the sequel, an input trajectory $u$ is fixed. 

\subsection{Nonlinear observability and optimisation }

We include straightforward properties of $l(t_1,t_2,\cdot,\cdot,u)$ for $0\leq t_1 \leq t_2$ and its derivatives in Lemma \ref{lem:cost_MHE_state} below. In the following, $ \diff_{\xi_2} l$ denotes the first order differential of $l(t_1,t_2,\xi_1,\cdot,u)$.
  \begin{lemma}
  \label{lem:cost_MHE_state} For any $\xi_1 \in \mathbb{R}^{n_x}$,  $0\leq t_1 \leq t_2$ and any input trajectory $u$,
  $l(t_1,t_2,\xi_1,\cdot,u)$ is continuously differentiable, $l(t_1,t_2,\xi_1,\xi_1,u)=0$, $\xi_1$ is a global solution of the following optimisation problem:
  \begin{equation}
\begin{array}{rrclcc}
\displaystyle\inf_{\xi_2\in \mathbb{R}^{n_x}}l(t_1,t_2,\xi_1,\xi_2,u),
\end{array}
\label{pb:horizon_state}
\end{equation}
 and ${\diff}_{\xi_2}l(t_1,t_2,\xi_1,\xi_1,u)=0$.
  \end{lemma}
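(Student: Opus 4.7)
The plan is to verify the four assertions essentially in the order they appear, since the vanishing of the differential at $\xi_1$ will follow as an immediate consequence of the earlier parts.

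First I would dispose of the easy algebraic facts. Setting $\xi_2 = \xi_1$ makes the two flows $\phi_f(s;t_1,\xi_1,u)$ and $\phi_f(s;t_1,\xi_2,u)$ identical (by the assumed uniqueness of solutions), so the integrand in the definition of $l$ vanishes pointwise on $[t_1,t_2]$, which gives $l(t_1,t_2,\xi_1,\xi_1,u)=0$. Since the integrand is a squared Euclidean norm, $l(t_1,t_2,\xi_1,\xi_2,u)\geq 0$ for every $\xi_2$, and combining these two observations shows that $\xi_1$ is a global solution of Problem \eqref{pb:horizon_state}.

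The substantive step is continuous differentiability of $\xi_2 \mapsto l(t_1,t_2,\xi_1,\xi_2,u)$. Because $f$ is $C^2$, classical results on smooth dependence on initial conditions for ODEs (applied on each subinterval where $u$ is continuous, and pieced together by continuity at the finitely many breakpoints of $u$ on $[t_1,t_2]$) guarantee that $\xi_2 \mapsto \phi_f(s;t_1,\xi_2,u)$ is $C^1$, with a Jacobian $\partial_{\xi_2}\phi_f(s;t_1,\xi_2,u)$ that depends jointly continuously on $(s,\xi_2)$. Composing with the $C^1$ map $h(\cdot,u(s))$ and the smooth squared-norm function, the integrand becomes $C^1$ in $\xi_2$, and its partial derivative in $\xi_2$ is jointly continuous in $(s,\xi_2)$. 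Because $[t_1,t_2]$ is compact, the Leibniz rule (dominated convergence version) applies on any compact $\xi_2$-neighbourhood, yielding the $C^1$ property of $l(t_1,t_2,\xi_1,\cdot,u)$ together with the explicit formula
\begin{equation*}
\diff_{\xi_2} l(t_1,t_2,\xi_1,\xi_2,u) \cdot \delta = 2\int_{t_1}^{t_2} \bigl(h(\phi_f(s;t_1,\xi_2,u),u(s)) - h(\phi_f(s;t_1,\xi_1,u),u(s))\bigr)^\top \diff_{\xi_2}\bigl[h(\phi_f(s;t_1,\xi_2,u),u(s))\bigr]\cdot \delta \, \diff s.
\end{equation*}

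Finally, the vanishing $\diff_{\xi_2} l(t_1,t_2,\xi_1,\xi_1,u)=0$ has two equally quick justifications: either one evaluates the formula above at $\xi_2=\xi_1$, where the first factor of the integrand is identically zero, or one invokes the fact that $\xi_1$ is an interior global minimiser of a $C^1$ function on $\mathbb{R}^{n_x}$, so Fermat's rule forces its first-order differential to vanish. No step presents a genuine obstacle; the only point requiring care is justifying differentiation under the integral sign with a merely piece-wise continuous input $u$, and this is handled by partitioning $[t_1,t_2]$ according to the continuity pieces of $u$ and applying the standard ODE smoothness theorem on each piece.
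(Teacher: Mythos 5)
Your proposal is correct and follows essentially the same route as the paper: differentiability of the flow in the initial condition (the paper cites Theorem 2.3.2 of Bressan--Piccoli for this), composition with $h$ and differentiation under the integral to get $C^1$ regularity of $l(t_1,t_2,\xi_1,\cdot,u)$, nonnegativity plus $l(t_1,t_2,\xi_1,\xi_1,u)=0$ for global optimality, and first-order necessary conditions for the vanishing differential. The extra care you take with the piecewise continuity of $u$ and the explicit formula for $\diff_{\xi_2}l$ (which matches the paper's Lemma~A.1) is consistent with, though more detailed than, the paper's own argument.
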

  \begin{proof}
 Note that because $f$ and $h$ are continuously differentiable  then, according to Theorem 2.3.2 in \cite{bressan_introduction_2007}, for any $(\xi_1,\xi_2) \in (\mathbb{R}^{n_x})^2$, $0\leq t_1 \leq t_2$ and an input trajectory $u$, $l(t_1,t_2,\xi_1,\cdot,u)$ is  continuously differentiable too. Besides, $l(t_1,t_2,\xi_1,\xi_1,u)=0$ and $l(t_1,t_2,\xi_1,\xi_2,u) \geq 0$ from Definition \ref{def:cum_error_obs} which means that  $\xi_1$ is a global solution of Problem \eqref{pb:horizon_state}. As a consequence, from first order necessary optimality conditions of unconstrained problems, ${\diff}_{\xi_2}l(t_1,t_2,\xi_1,\xi_1,u)=0$.
  \end{proof}
  
\subsubsection{Full Information Estimation (FIE)}

\emph{Full Information} estimation is a straightforward optimization-based estimation technique. In FIE, the estimator is computed by minimising the cumulative measurement error  between the reference trajectory $x(\cdot)$ and an estimated trajectory $\phi_f(\cdot;0,\xi,u)$ on a interval $[0,t]$ for some $t\geq0$. % $l^{pa}(0,t,\xi,x_0,u)$ and reads for some input trajectory $u$, for any $t\geq0$ and any $(x_0,\xi) \in \mathbb{R}^{n_x}\times \mathbb{R}^{n_x} $:
% \begin{align*}
%      l^{pa}(0,t,\xi,x_0,u)=&  \int_0^t{\Vert h(\phi_f(s;0,\xi,u),u(s))- h(\phi_f(s;0,x_0,u),u(s))\Vert}^2\mathrm{d}s,\\
%                   =&\int_0^t{\Vert h(\phi_f(s;0,\xi,u),u(s))- h(x(s),u(s))\Vert}^2\mathrm{d}s,\\
%                   =&\int_0^t{\Vert h(\phi_f(s;0,\xi,u),u(s))- y(s)\Vert}^2\mathrm{d}s.
% \end{align*}
It leads to the following optimization problem for any $t\geq0$ and $x_0 \in \mathbb{R}^{n_x}$:  
\begin{align}
\tag{$\text{FIE}_{t,u}$}
\displaystyle \inf_{\xi\in \mathbb{R}^{n_x}} l(0,t,x_0,\xi,u).
\label{pb:batch_gen_state}
\end{align}
Full Information estimation requires finding a global solution to Problem \eqref{pb:batch_gen_state}. Proposition \ref{prop:global_solution_universal_input} ensures that one recovers any initial condition $x_0$, if and only if $u$ is a universal input trajectory.
\begin{proposition}\label{prop:global_solution_universal_input}
For $t\geq0$, $u$ is a universal input trajectory on $[0,t]$ if and only if, for any $x_0\in \mathbb{R}^{n_x}$, $x_0$ is the unique global solution of Problem \eqref{pb:batch_gen_state}.
\end{proposition}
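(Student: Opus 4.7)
The plan is to reduce this equivalence to the integral characterisation of universality already established in Proposition~\ref{prop_universal_input}, combined with the fact (Lemma~\ref{lem:cost_MHE_state}) that $x_0$ is automatically a global minimiser of $l(0,t,x_0,\cdot,u)$ attaining value zero. The crux will be to recognise that, since $l(0,t,x_0,\cdot,u)\geq 0$ and $l(0,t,x_0,x_0,u)=0$, \emph{uniqueness} of $x_0$ as a global solution of Problem~\eqref{pb:batch_gen_state} is equivalent to the strict positivity condition $l(0,t,x_0,\xi,u) > 0$ for every $\xi \neq x_0$.

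For the forward direction, I would fix a universal input $u$ on $[0,t]$, an arbitrary $x_0 \in \mathbb{R}^{n_x}$, and a competitor $\xi \neq x_0$. Universality on $[0,t]$ yields some $s \in [0,t]$ at which $h(\phi_f(s;0,x_0,u),u(s))$ and $h(\phi_f(s;0,\xi,u),u(s))$ differ. Reusing the argument from the proof of Proposition~\ref{prop_universal_input} — piecewise continuity of $u$ together with continuity of $h$ and of the flow $\phi_f$ in time on intervals where $u$ is continuous — the integrand in the definition of $l$ is strictly positive on an open sub-interval, so $l(0,t,x_0,\xi,u) > 0$. Combined with Lemma~\ref{lem:cost_MHE_state}, this identifies $x_0$ as the unique global solution.

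For the converse, I would assume that $x_0$ is the unique global solution of Problem~\eqref{pb:batch_gen_state} for every choice of $x_0$, pick any $\xi_1 \neq \xi_2$, and apply the hypothesis with $x_0 := \xi_1$: uniqueness then forces $l(0,t,\xi_1,\xi_2,u) > l(0,t,\xi_1,\xi_1,u) = 0$, so there must exist $s \in [0,t]$ where the output trajectories issued from $\xi_1$ and $\xi_2$ disagree, which is exactly universality on $[0,t]$. No serious obstacle is anticipated here; the entire argument is essentially a translation of the zero-set of $l(0,t,\xi_1,\cdot,u)$ into optimisation vocabulary. The only technical point worth handling with care is the equivalence between strict positivity of the integral over $[0,t]$ and the pointwise distinguishability condition, but this is already the content of the finite-horizon version of the proof of Proposition~\ref{prop_universal_input}.
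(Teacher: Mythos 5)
Your proposal is correct and follows essentially the same route as the paper: both reduce the statement to the integral characterisation of universality (Proposition~\ref{prop_universal_input}), use Lemma~\ref{lem:cost_MHE_state} to note that $x_0$ is always a global minimiser with value zero, and identify uniqueness of the global solution with strict positivity of $l(0,t,x_0,\xi,u)$ for $\xi\neq x_0$. Your explicit attention to the fixed-horizon $[0,t]$ version of the continuity argument is a small point the paper glosses over, but it is the same underlying argument.
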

\begin{proof}
First, Lemma \ref{lem:cost_MHE_state} leads to  $l(0,t,\xi,x_0,u)\geq 0=l(0,t,x_0,x_0,u)$, for any $t\geq0$ and $(\xi,x_0) \in (\mathbb{R}^{n_x})^2$, so $x_0$ is a global solution of Problem \eqref{pb:batch_gen_state} independently of $u$. Then, by Proposition \ref{prop_universal_input}, $u$ is a universal input trajectory if and only if for any $\xi\neq x_0$, $l(0,t,\xi,x_0,u)>0$. This means that $x_0$ is the unique global solution of Problem \eqref{pb:batch_gen_state} for any $x_0 \in \mathbb{R}^{n_x}$  if and only if $u$ is a universal input trajectory.  
\end{proof}
% However, the formulation of Problem \eqref{pb:batch_gen} has several drawbacks. Note that, if $t$ is chosen as a final time $t_f$ then the problem is solved offline and if $t$ is the current time then it is solved online. In the offline case, if $t_f$ is large then the numerical resolution of Problem \eqref{pb:batch_gen} can be very costly as classical methods require a discretization process.    Besides, the offline resolution prevents one from doing feedback control. Likewise, in the online case, the window of integration in Problem \eqref{pb:batch_gen} grows with time which makes it actually harder and harder to solve at time goes on.
Since the size of the integration window in \eqref{pb:batch_gen_state} grows with $t$, the numerical computation of  $l(0,t,x_0,\xi,u)$ and thus the practical resolution of  \eqref{pb:batch_gen_state} become progressively more difficult as time goes. A common alternative is to consider the input/output trajectories only on a time window of fixed length which leads to \emph{Moving Horizon} Estimation.

\subsubsection{Moving Horizon Estimation (MHE)}
As an alternative to Problem \eqref{pb:batch_gen_state}, one can consider a similar estimation problem where one keeps only the knowledge of $y(\cdot)$ on $[t-T,t]$ for some memory time $T>0$ and look for a \emph{Moving Horizon} estimator by minimising   $l(t-T,t,x(t-T),\xi,u)$ instead. This typically leads to the following optimization problem, for $t\geq T$:
\begin{equation}
\tag{$\text{MHE}_{t,T,u}$}
\begin{array}{rrclcc}
\displaystyle\inf_{\xi\in \mathbb{R}^{n_x}}l(t-T,t,x(t-T),\xi,u). 
\end{array}
\label{pb:receding_horizon_gen_state}
\end{equation}

Problem \eqref{pb:receding_horizon_gen_state} is written in the so-called `sequential form' where the goal is to recover $x(t-T)$ by solving Problem \eqref{pb:receding_horizon_gen_state} at time $t$ and reconstruct the rest of the trajectory by applying the flow $\phi_f$ with the input trajectory $u$. Similar to Problem \eqref{pb:batch_gen_state}, persistence of the input trajectory implies in particular uniqueness of a global solution of Problem \eqref{pb:receding_horizon_gen_state}.
% In the following, for a input trajectory $u$ and $\tau \geq 0$, we denote by $u(\cdot+\tau)$ the input trajectory obtained from $u$ by shifting it forward of the time duration $\tau$.

\begin{proposition}
\label{prop:global_solution_pers_input}
An input trajectory $u$ is  persistent if and only if, there exists $T>0$ such that for any $t \geq T$ and  any initial condition $x_0 \in \mathbb{R}^{n_x}$, $x(t-T)=\phi_f(t-T;0,x_0,u)$ is the \emph{unique} global solution of Problem  \ref{pb:receding_horizon_gen_state}. 
\end{proposition}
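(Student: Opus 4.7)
The plan is to show the two implications separately, each rooted in Lemma \ref{lem:cost_MHE_state} (which guarantees that $\bar x := x(t-T)$ realizes the value $0$ in Problem \eqref{pb:receding_horizon_gen_state}, hence is always a global solution) and in the fact that $x_0 \mapsto \phi_f(t-T;0,x_0,u)$ is a bijection of $\mathbb{R}^{n_x}$, so that varying $x_0$ sweeps out every possible $\bar x$. For the ($\Rightarrow$) direction I would let $T>0$ and the family $(\kappa_t)_{t\geq T}$ be as in Definition \ref{def:persistent_input}, fix $t\geq T$ and $x_0 \in \mathbb{R}^{n_x}$, and set $\bar x := \phi_f(t-T;0,x_0,u)$. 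Lemma \ref{lem:cost_MHE_state} gives that $\bar x$ is a global solution of Problem \eqref{pb:receding_horizon_gen_state} with value $0$. If $\xi^* \in \mathbb{R}^{n_x}$ is any other global solution, then $l(t-T,t,\bar x,\xi^*,u)=0$, and \eqref{eq:integral_persistent_input} forces $\kappa_t(\Vert \bar x - \xi^* \Vert) \leq 0$. Since $\kappa_t$ is strictly increasing with $\kappa_t(0)=0$, this yields $\xi^* = \bar x$, establishing uniqueness.

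For the ($\Leftarrow$) direction I would use the horizon $T>0$ supplied by the hypothesis, fix $t \geq T$, and observe that bijectivity of the flow at time $t-T$ lets me rewrite the uniqueness assumption as: for every $\xi_1 \in \mathbb{R}^{n_x}$ and every $\xi_2 \neq \xi_1$, $l(t-T,t,\xi_1,\xi_2,u) > 0$. To produce the $\mathcal{K}$-function required by Definition \ref{def:persistent_input}, I would introduce
\[
F_t(\delta) := \inf_{\xi_1 \in \mathbb{R}^{n_x}} l(t-T,t,\xi_1,\xi_1+\delta,u),
\]
argue that $F_t$ is continuous with $F_t(0)=0$ and $F_t(\delta)>0$ for $\delta \neq 0$, and then invoke Lemma \ref{lem:kappa_cont_pos_def} to obtain a $\mathcal{K}$-function $\kappa_t$ satisfying $F_t(\delta) \geq \kappa_t(\Vert \delta \Vert)$. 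Substituting $\delta = \xi_2 - \xi_1$ then recovers \eqref{eq:integral_persistent_input}.

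The main obstacle sits in that middle step of the converse: establishing continuity and strict positivity of $F_t$ off the origin. Pointwise positivity of $l$ on $\{\xi_1 \neq \xi_2\}$ does not by itself prevent a minimizing sequence in $\xi_1$ from escaping to infinity while $l$ tends to $0$, so one must exploit the regularity of $\phi_f$ and $h$ (for instance via uniform continuity on sublevel sets, or via a coercivity argument specific to the class of systems at hand) to close this gap. The forward implication is essentially a restatement of the definition; all of the technical content of the proposition lies in this reduction to Lemma \ref{lem:kappa_cont_pos_def}.
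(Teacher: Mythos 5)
Your forward implication is complete and is exactly the argument the paper intends: the paper's own proof is a one-line reference to Proposition \ref{prop:global_solution_universal_input}, and your use of Lemma \ref{lem:cost_MHE_state} for global optimality of $x(t-T)$ together with $\kappa_t(\Vert \bar x-\xi^*\Vert)\le l(t-T,t,\bar x,\xi^*,u)=0$ for uniqueness is the right transcription of that template. Your observation that bijectivity of $x_0\mapsto\phi_f(t-T;0,x_0,u)$ lets the uniqueness hypothesis be restated as ``$l(t-T,t,\xi_1,\xi_2,u)>0$ for all $\xi_1\ne\xi_2$'' is also the step the paper implicitly performs.

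The obstacle you flag in the converse is a genuine gap, and it is not closed by the paper either. Passing from pointwise positivity of $l$ off the diagonal to a bound $l(t-T,t,\xi_1,\xi_2,u)\ge\kappa_t(\Vert\xi_1-\xi_2\Vert)$ valid for \emph{all} pairs requires precisely that $F_t(\delta)=\inf_{\xi_1}l(t-T,t,\xi_1,\xi_1+\delta,u)$ be positive for $\delta\ne0$ and admit a class-$\mathcal{K}$ minorant, and neither holds in general on a non-compact domain: the infimum over $\xi_1\in\mathbb{R}^{n_x}$ can vanish even when every term is positive (e.g.\ if the output error decays along the reference trajectory), and even for a single-variable continuous positive definite $F$ on all of $\mathbb{R}^n$ a global lower bound by a strictly increasing $\kappa$ fails whenever $\inf_{\Vert\xi\Vert=r}F(\xi)\to0$ as $r\to\infty$, so Lemma \ref{lem:kappa_cont_pos_def} is really a statement about compact balls. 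This is why the analogous equivalence for \emph{weak} persistence (Proposition \ref{prop:local_solution_weak_per_input}) does go through: there all quantifiers range over the compact ball $\widebar{B}(x(t-T),R_t)$. For the present proposition, what the paper's cited route actually delivers is the equivalence between uniqueness of the global solution and positivity of $l$ off the diagonal; upgrading that to the $\mathcal{K}$-function bound of Definition \ref{def:persistent_input} needs an additional compactness or coercivity ingredient (or the weaker, positivity-only notion of persistence from \cite{besancon_nonlinear_2007}). So your proposal reproduces the paper's route, and correctly isolates the one step that the paper's ``very similar'' proof leaves unjustified.
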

\begin{proof}
The proof is very similar to that of Proposition \ref{prop:global_solution_universal_input}.

% According to Definition \ref{def:persistent_input}, $u$ is a persistent input trajectory if and only if
% there exists $T>0$ such that $\forall x_0 \neq \xi $, $l(0,T,x_0,\xi,u)>0$. FRom the group property of the flow $\phi_f$:

\end{proof}

% In theory, classical Moving Horizon Estimation consists solving Problem \eqref{pb:receding_horizon_gen_state} \emph{globally}. 
% \begin{remark}
% Moving Horizon Estimation Problems usually contain an additional cost called an \emph{arrival cost}, that we denote by $l_{ar}(\xi, \hat{x}_{t-T})$ where $\hat{x}(t-T)$ is some estimate of $x(t-T)$ based $y(s)$ for $s\in [0,t-T]$. It is meant to aggregate the information available on the system before time $t-T$ and improve the approximation made by switching from a FIE problem to an MHE problem. We omit this term in the context of this paper as it is not necessary to ensure convergence of MHE schemes in the presence of small disturbances or no disturbance at all, as proved in \cite{alamir_further_2002,kang_moving_2006,wynn_convergence_2014}.    

% \end{remark}

\begin{remark}

Proposition \ref{prop:global_solution_pers_input} states that Moving Horizon Estimation is enabled by  persistent input trajectories. In the case of regularly persistent input trajectories, the presence of function $\kappa$ in Definition \ref{def:regularly_persistent_input} typically allows one to build global nonlinear observers. See Chapter 5 of \cite{besancon_nonlinear_2007} for an example. Related works have used similar conditions but they are considered uniformly with respect to control inputs. For example, in \cite{alessandri_moving-horizon_2008, rao_constrained_2003}, a condition called $N$-\emph{step observability} or uniform observability is assumed. It ensures that a small cumulative output error  on a rolling time window of size $N$ implies a small error in the initial conditions. This condition is formally very similar to the concept of uniform observability stated in Definition  \ref{def:universal_input} and ignores the influence that an input trajectory might have on observability.  Besides, in \cite{allan_robust_2021, hu_robust_2017,  knuefer_nonlinear_2021,knufer_time-discounted_2020,muller_nonlinear_2017}, global robust stabilty of FIE/MHE schemes are proved under a detectabilty assumption called \emph{incremental input/output-to-state stability} (i-IOSS) or its discounted version. Note that i-IOSS implies that the error between the current state of two trajectories can be bounded by the error in trajectories of process noise, measurement noise, control input and output. However, contrary to the observability conditions introduced in Section \ref{sec:weak_per}, the comparison functions used in i-IOSS  are independent of the control input which makes it a uniform detectability assumption. 
\end{remark}

\begin{remark}
Even if there exist regularly persistent input trajectories, they can be very hard to find because of the strong nature of the property. Moreover, one cannot hope to solve \eqref{pb:receding_horizon_gen_state} globally but only locally as it is generally nonconvex. Indeed, if one is only able to find local solutions of \eqref{pb:receding_horizon_gen_state}, then regular persistence seems unnecessary and  one needs a less demanding concept of observability. This notion of observability is discussed in the next section.
\end{remark}

 \section{Weak persistence and  Moving Horizon Estimation\label{sec:weak_per}  }
In this section, we introduce the notions of \emph{weakly and weakly regularly persistent} input trajectories that ensure  quantitative distinguishibility between  states that are near the reference one while having only access to the past observations on a moving time-window. These notions are  extensions of classical ones presented in Section \ref{sec:observability} and are designed to ensure that MHE problems can be solved. In particular, throughout this section, we show that weakly and weakly regularly persistent input trajectories ensure that the MHE has a locally unique local solution that is stable in the presence of small additive measurement noise.

    %  \subsection{State Estimation} 
\subsection{Definitions and first properties}
Note that a regularly persistent input trajectory $u$ is such that every possible state can be distinguished with the output of the system if one waits for no more than a fixed time. Thus, if one keeps the terminology from Definition \ref{def:local_weak_obs}, regular persistence is a strong and non-local property of the input trajectories. % Besides,  if $u$ is a regularly persistent input trajectory then  $x(t-T)$ is assured to be the unique global solution of Problem \eqref{pb:receding_horizon_gen_state} from Proposition \ref{prop:global_solution_pers_input}. However, it is extremely hard to find a global solution of  Problem \eqref{pb:receding_horizon_gen_state} for a general nonlinear system as $c(t,T,\xi,u)$ is generally  nonconvex in $\xi$. 
As mentioned previously, persistence of the input might be too demanding. Besides, it is generally very complicated to verify  that an input trajectory is persistent for a general nonlinear system because it requires checking that \eqref{eq:integral_persistent_input} holds for every pair of states. As a result, the concepts of persistent and regularly persistent inputs are too strong and unusable in many practical applications of MHE. One would prefer to ensure that only pairs of states in a neighborhood of  $x(t-T)$ are distinguishable on a rolling horizon for an appropriate choice of input trajectory.  
%
% By continuity of $c(t,T,\cdot,u)$, this would  impose that $x(t-T)$ is the unique global solution of Problem \eqref{eq:integral_persistent_input} in a neighbourhood of $x(t-T)$. 
%
% Following on the comments from last section, in the context of MHE with limited computational capacity, one does require such a strong property and is satisfied with local solutions.
%
% Therefore, the  efficient or online MHE algorithms have been introduced, see \cite{morari_efficient_2009}. The main idea is that after discretization on a time sequence $(t_k)_{k\geq0}$, one solves Problem \eqref{pb:receding_horizon_gen_state} at time $t_k$ approximately to get an estimation $\hat{x}_{t_k-T}$ and uses it as a starting point for the resolution of   Problem \eqref{pb:receding_horizon_gen_state} at time $t_{k+1}$. This type of scheme has notably been used in \cite{alessandri_fast_2017,morari_efficient_2009,kang_moving_2006,wynn_convergence_2014,zavala_stability_2010}. These results use very similar ad-hoc uniform observability conditions thattend to  hide the role of the input.
%in Definition \ref{def:weakly_regularly_persistent_input}
%
As a consequence, the second contribution of this paper is to state the definitions of weakly persistent and weakly regularly persistent input trajectories based on Definitions \ref{def:persistent_input} and \ref{def:regularly_persistent_input}.  They enable the practical resolution of  Moving Horizon Estimation problems and emphasize the role of the input trajectories in the proposed observability notions. Leveraging the notion of the Observability Grammian, we also give a necessary and a sufficient condition for weak and weak regular persistence of input trajectories based on second order derivatives. 

\begin{definition}[Weakly persistent input] \label{def:weakly_persistent_input} Fix an initial condition $x_0 \in \mathbb{R}^{n_x}$.  An input trajectory $u$ is said to be \emph{weakly persistent} at $x_0$, if there exists $T>0$ such that for any $ t\geq T$ there exist $R_t>0$ and a $\mathcal{K}$-function $\kappa_t$ such that for any $(\xi_1,\xi_2)\in (\widebar{B}(x(t-T),R_t))^2$:
\begin{align}
      l(t-T,t,\xi_1,\xi_2,u)\geq\kappa_t(\Vert \xi_1 -\xi_2 \Vert),\label{eq:weakly_per_input}
\end{align}
where $\widebar{B}(x(t-T),R_t)$ denotes the closed ball for the Euclidian norm centered at $x(t-T)$ of radius $R_t$.
% For $\mathcal{X}\subset \mathbb{R}^{n_x}$,  an input trajectory $u$ is said to be weakly  persistent on $\mathcal{X}$ if $u$ is  weakly persistent at $x_0$ for any  $x_0\in\mathcal{X} $  and if $T$ depends only on $\mathcal{X}$ and $u$ and in particular  do not depend on $x_0$. 
System \eqref{eq:general_dyn_continous_time} is said to be weakly persistently observable if for any $x_0\in\mathbb{R}^{n_x}$ there exists a weakly persistent input trajectory at $x_0$.
\end{definition}

\begin{definition}[Weakly regularly persistent input] \label{def:weakly_regularly_persistent_input} Fix an initial condition $x_0 \in \mathbb{R}^{n_x}$.  An input trajectory $u$ is said to be \emph{weakly regularly persistent} at $x_0$, 
if there exists $T>0$, $R>0$ and a $\mathcal{K}$-function $\kappa$ such that, for any $ t\geq T$ and any $ (\xi_1,\xi_2)\in  (\widebar{B}(x(t-T),R))^2 $:
\begin{align}
      l(t-T,t,\xi_1,\xi_2,u)\geq\kappa(\Vert \xi_1-\xi_2 \Vert),\label{eq:weakly_reg_per_input}
\end{align}
where  $x(t-T)= \phi_f(t-T;0,x_0,u)$.
For $\mathcal{X}\subset \mathbb{R}^{n_x}$,  an input trajectory $u$ is said to be weakly regularly persistent on $\mathcal{X}$ if $u$ is  weakly regularly persistent at $x_0$ for any  $x_0\in\mathcal{X} $  and if $(T,R,\kappa)$ from  \eqref{eq:weakly_reg_per_input} depend only on $\mathcal{X}$ and $u$. System \eqref{eq:general_dyn_continous_time} is said to be weakly regularly observable if for any $x_0\in\mathbb{R}^{n_x}$ there exists a weakly regularly persistent input trajectory at $x_0$.
\end{definition}

It is clear from Definitions \ref{def:persistent_input} and \ref{def:regularly_persistent_input} that persistent input trajectories (resp. regularly persistent) are weakly persistent (resp. weakly regularly persistent).  Besides, from the properties of $\kappa$ in \eqref{eq:weakly_reg_per_input}, it is clear that weakly regularly persistent input trajectories are weakly persistent.
Roughly speaking, weakly persistently observable systems are such that, for some initial condition and some associated input trajectory,  the rolling cumulative measurement error between state trajectories starting close enough to the reference one does not vanish. Weakly regularly observable systems have the additional property that small rolling cumulative error in the output implies small `estimation' error uniformly in time. In the sequel, we give several characterizations of weakly and weakly regularly persistent input trajectories.

\subsection{Characterization of weakly persistent inputs} \label{sec:charac_weak_per_input}
Contrary to the observability concepts discussed in Section \ref{sec_class_nonlinear_obs},  weakly persistent input trajectories only ensure that Problem \eqref{pb:receding_horizon_gen_state} has a strict local and a global solution at $x(t-T)$ and potentially allows several global solutions. This is the topic of Proposition \ref{prop:local_solution_weak_per_input}.
\begin{proposition}\label{prop:local_solution_weak_per_input}
Let $x_0\in \mathbb{R}^{n_x}$ be an initial condition and $u$ be an input trajectory. Then, $u$ is a weakly persistent input trajectory at $x_0$ if and only if there exists $T>0$ such that for any $t\geq T$, there exists $R_t>0$ such that for any $\xi_1\in \widebar {B}(x(t-T),R_t)$,  the following optimisation problem:
\begin{equation}
\begin{array}{rrclcc}
\displaystyle\inf_{\xi_2\in \mathbb{R}^{n_x}}l(t-T,t,\xi_1,\xi_2,u)
\end{array}
\label{pb:receding_horizon_gen_state_neigh}
\end{equation}
admits a global solution at $\xi_1$ that is unique on $\widebar{B}(x(t-T),R_t)$.
In particular, in this case, $x(t-T)$ is a global solution and a strict local solution of Problem \eqref{pb:receding_horizon_gen_state}.
\end{proposition}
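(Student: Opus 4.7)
The argument establishes the two implications separately. The forward direction unpacks Definition \ref{def:weakly_persistent_input} using Lemma \ref{lem:cost_MHE_state}; the reverse direction contains the only real technical content, which is to manufacture a $\mathcal{K}$-function from the qualitative uniqueness-of-minimizer hypothesis.

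For the forward direction, I would assume $u$ is weakly persistent at $x_0$ with parameters $T$, $R_t$, and $\kappa_t$. Fix $t\geq T$ and $\xi_1 \in \widebar{B}(x(t-T), R_t)$. Lemma \ref{lem:cost_MHE_state} immediately gives $\xi_1$ as a global minimizer of $l(t-T, t, \xi_1, \cdot, u)$, since $l\geq 0$ and $l(t-T, t, \xi_1, \xi_1, u)=0$. For uniqueness on the ball, any $\xi_2 \in \widebar{B}(x(t-T), R_t) \setminus \{\xi_1\}$ satisfies $\|\xi_1 - \xi_2\| > 0$, hence $l(t-T, t, \xi_1, \xi_2, u) \geq \kappa_t(\|\xi_1 - \xi_2\|) > 0$ by strict monotonicity of $\kappa_t$.

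For the reverse direction, fix $t \geq T$, set $\mathcal{C}_t = \widebar{B}(x(t-T), R_t)$, and define for $r \in [0, 2R_t]$
\[ m_t(r) = \inf\{ l(t-T, t, \xi_1, \xi_2, u) : (\xi_1, \xi_2) \in \mathcal{C}_t^2,\ \|\xi_1 - \xi_2\| \geq r \}. \]
Continuity of $l$ in $(\xi_1, \xi_2)$ (from continuous dependence of $\phi_f$ on initial conditions and smoothness of $h$) together with compactness of the constraint set shows the infimum is attained for every $r \in (0, 2R_t]$; at any attaining pair the two components are distinct, so the uniqueness-on-ball hypothesis forces $m_t(r) > 0$. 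Thus $m_t$ is non-decreasing with $m_t(0)=0$ and $m_t(r)>0$ on $(0, 2R_t]$. The main technical hurdle is then to upgrade $m_t$ --- which is a priori neither continuous nor strictly increasing --- to an honest $\mathcal{K}$-function lying below it; I would take the mollification
\[ \kappa_t(r) = \int_0^r \frac{\min(1,\, m_t(\min(s, 2R_t)))}{(1+s)^2}\, \mathrm{d}s, \]
which is continuous, strictly increasing on $[0,\infty)$ (the integrand is strictly positive for $s>0$), vanishes at $0$, and satisfies $\kappa_t(r) \leq m_t(r)$ on $[0, 2R_t]$ by monotonicity of $m_t$ together with $\int_0^{\infty} (1+s)^{-2}\,\mathrm{d}s = 1$.

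The final in-particular assertion follows by specialising the forward direction to $\xi_1 = x(t-T) \in \mathcal{C}_t$: this identifies Problem \eqref{pb:receding_horizon_gen_state_neigh} with Problem \eqref{pb:receding_horizon_gen_state}, yielding $x(t-T)$ as a global minimizer (via Lemma \ref{lem:cost_MHE_state}) and, as the unique minimizer over $\mathcal{C}_t$, a strict local minimizer of \eqref{pb:receding_horizon_gen_state}.
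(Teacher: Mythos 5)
Your proof is correct, and its overall skeleton matches the paper's: the forward direction and the ``in particular'' clause are handled exactly as in the paper via Lemma \ref{lem:cost_MHE_state} (nonnegativity of $l$, $l(t-T,t,\xi_1,\xi_1,u)=0$, and strict positivity of $\kappa_t$ off zero). Where you genuinely diverge is the reverse direction. The paper dispatches it in one line by invoking Lemma \ref{lem:kappa_cont_pos_def} (Khalil's Lemma 4.3), which produces a $\mathcal{K}$-function lower bound for a continuous function of a \emph{single} variable that vanishes only at the origin; applying it here requires first collapsing the two-variable cost $l(t-T,t,\xi_1,\xi_2,u)$ on the compact set $(\widebar{B}(x(t-T),R_t))^2$ into such a single-variable object, a reduction the paper leaves implicit. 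Your construction of $m_t(r)$ as the infimum over pairs at distance at least $r$, the compactness argument showing $m_t(r)>0$ for $r>0$, and the integral mollification $\kappa_t(r)=\int_0^r \min(1,m_t(\min(s,2R_t)))(1+s)^{-2}\,\mathrm{d}s$ carry out exactly that reduction and the subsequent upgrade of a nondecreasing, possibly discontinuous minorant to an honest $\mathcal{K}$-function. In effect you reprove the needed variant of Lemma \ref{lem:kappa_cont_pos_def} from scratch; this costs a few extra lines but makes the argument self-contained and closes the small gap in how the cited lemma is applied. All the individual checks (attainment of the infimum, monotonicity of $m_t$, strict monotonicity and the bound $\kappa_t\leq m_t$ on $[0,2R_t]$, and the deduction that unique global minimality on the ball implies strict local minimality) are sound.
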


\begin{proof}
By definition, $u$ is a weakly persistent input trajectory at $x_0$ if and only if there exists $T>0$ such that for any $ t\geq T$ there exists $R_t>0$ and a a $\mathcal{K}$-function $\kappa_t$ such that for any $(\xi_1,\xi_2)\in (\widebar{B}(x(t-T),R_t))^2$ with $\xi_1\neq \xi_2$:
\begin{align}
      l(t-T,t,\xi_1,\xi_2,u)=\kappa_t(\Vert\xi_1-\xi_2\Vert)>0.\label{eq:weakly_per_input_proof}
\end{align}
From Lemma \eqref{lem:cost_MHE_state}, for any $x_0\in \mathbb{R}^{n_x}$, $T>0$. any $t\geq T$, any input trajectory $u$, and any  $(\xi_1,\xi_2)\in (\widebar{B}(x(t-T),R_t))^2$,  $l(t-T,t,\xi_1,\xi_1,u)=0$, $l(t-T,t,\xi_1,\xi_2,u)\geq 0$ and $\xi_1$ is a global solution of Problem \eqref{pb:receding_horizon_gen_state_neigh}. By also invoking Lemma \ref{lem:kappa_cont_pos_def}, this precisely means that $u$ is a weakly persistent input trajectory at $x_0$ if and only if there exists $T>0$ such that for any $ t\geq T$ there exists $R_t>0$ such that for any $\xi_1\in \widebar{B}(x(t-T),R_t)$, $\xi_1$ is the unique global solution of Problem \eqref{eq:weakly_per_input_proof} on $\widebar{B}(x(t-T),R_t)$. In the case that one of the two statements in Proposition \ref{prop:local_solution_weak_per_input} holds, one can choose $\xi_1=x(t-T)$ and \eqref{eq:weakly_per_input_proof} shows directly that $x(t-T)$ is a strict local solution of \eqref{pb:receding_horizon_gen_state}.  
\end{proof}

In the sequel, for $(n_1,n_2,m)\in \mathbb{N}^3$,  and any twice differentiable function $F:\mathbb{R}^{n_1}\rightarrow \mathbb{R}^m$  we  denote by $\diff F$ and $\diff^{2}F$ respectively the first and second order differential of $F$. In addition, for any twice differentiable function $G:\mathbb{R}^{n_1}\times \mathbb{R}^{n_2} \rightarrow\mathbb{R}^m$ and $(\xi_1,\xi_2)\in \mathbb{R}^{n_1}\times \mathbb{R}^{n_2}$, we denote by $\diff_{\xi_2} G(\xi_1,\xi_2)$ the differential of $G(\xi_1,\cdot)$ at $\xi_2$ and by $\diff^2_{\xi_2} G(\xi_1,\xi_2)$ the second order differential of $G(\xi_1,\cdot)$ at $\xi_2$.
In particular, for any $T>0$, any $t\geq T$, any input trajectory $u$ and any $(\xi_1,\xi_2)\in(\mathbb{R}^{n_x})^2$, we respectively  denote by $\diff_{\xi_2}l(t-T,t,\xi_1,\xi_2,u)$ and  $\diff^2_{\xi_2}l(t-T,t,\xi_1,\xi_2,u)$ the differential and the hessian  of  $l(t-T,t,\xi_1,\cdot,u)$ at $\xi_2$.  Their explicit expression are included in Lemma \ref{lem:1_2_diff_l} in Appendix \ref{app:1_2_diff_l} as well as a proof of their existence.

% Proposition \ref{prop:hessian_pos_definite_weak_per} gives a sufficient and a necessary condition for weak persistence in terms of positive definiteness of the hessian of $l$ around $x(t-T)$. 
We first give the definition of a $\mathcal{K}$-function with finite sensitivity taken from \cite{alessandri_advances_2010}.

\begin{definition}[Finite sensitivity]\label{def:finite_sens}
A $\mathcal{K}$-function $\kappa$ is said to have \emph{finite sensitivity} if and only if there exists $r>0$ such that:
\begin{align}
    \inf_{\Vert \xi \Vert\neq0 ,\Vert \xi \Vert \leq r} \frac{\kappa(\Vert \xi \Vert)}{\Vert \xi \Vert^2}>0.\label{eq:finite_sensitivity}
\end{align}
\end{definition}
Intuitively, a  $\mathcal{K}$-function with finite sensitivity is  lower bounded by a positive definite quadratic form locally around $0$. As it is discussed in Proposition \ref{prop:hessian_pos_definite_weak_per}, this property allows one to link weak regular persistence of an input trajectory $u$ to the positive definiteness of the Hessian of $l(t-T,t,\xi_1,\cdot,u)$  at $\xi_2$, for any $(\xi_1,\xi_2)$ close to $x(t-T)$.
\begin{proposition}\label{prop:hessian_pos_definite_weak_per}
Let $x_0\in \mathbb{R}^{n_x}$ and $u$ be an input trajectory. Assume there exists $T>0$ such that for any $t\geq T$, there exists $R_t>0$ such that for any $(\xi_1,\xi_2)\in (\widebar{B}(x(t-T),R_t))^2$:
\begin{align}
    \mathrm{d}^2_{\xi_2}l(t-T,t,\xi_1,\xi_2,u)\succ 0,\label{eq:hess_pos_prop}
\end{align}
where $\succeq$ and $\succ$ denote Loewner partial order on positive semi-definite matrices. Then, $u$ is a weakly persistent input trajectory at $x_0$.

Conversely, if $u$ is a weakly persistent input trajectory at $x_0$ and all the associated $\mathcal{K}$-functions $\kappa_t$ have finite sensitivity, then there exists $T>0$ such that for any $t\geq T$, there exists $R_t>0$ such that for any $(\xi_1,\xi_2)\in (\widebar{B}(x(t-T),R_t))^2$, \eqref{eq:hess_pos_prop} holds.
\end{proposition}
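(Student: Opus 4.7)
The plan is to prove the two directions separately, in both cases using the second-order Taylor expansion of $l(t-T,t,\xi_1,\cdot,u)$ around its minimum at $\xi_2 = \xi_1$, which by Lemma \ref{lem:cost_MHE_state} satisfies $l(t-T,t,\xi_1,\xi_1,u) = 0$ and $\diff_{\xi_2} l(t-T,t,\xi_1,\xi_1,u) = 0$.

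For the forward direction, I would write Taylor's theorem in Lagrange form to get
\[
l(t-T,t,\xi_1,\xi_2,u) = \tfrac{1}{2} (\xi_2-\xi_1)^{\top} \diff^2_{\xi_2} l(t-T,t,\xi_1,\tilde{\xi},u) (\xi_2-\xi_1)
\]
for some $\tilde{\xi}$ on the segment joining $\xi_1$ and $\xi_2$, hence in $\widebar{B}(x(t-T),R_t)$. Since by Lemma \ref{lem:1_2_diff_l} the map $(\xi_1,\xi_2) \mapsto \diff^2_{\xi_2} l(t-T,t,\xi_1,\xi_2,u)$ is continuous on the compact product ball $(\widebar{B}(x(t-T),R_t))^2$ and strictly positive definite there by hypothesis, the smallest-eigenvalue function attains a positive minimum $\lambda_t > 0$. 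This yields $l(t-T,t,\xi_1,\xi_2,u) \geq \tfrac{\lambda_t}{2}\|\xi_1-\xi_2\|^2$, and $\kappa_t(s) := \tfrac{\lambda_t}{2} s^2$ is a $\mathcal{K}$-function verifying \eqref{eq:weakly_per_input}.

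For the converse, using the finite sensitivity of each $\kappa_t$, I would extract constants $r_t, c_t > 0$ with $\kappa_t(s) \geq c_t s^2$ for $0 \leq s \leq r_t$, so that combined with weak persistence on a ball of radius $R_t^0 > 0$,
\[
l(t-T,t,\xi_1,\xi_2,u) \geq c_t \|\xi_1-\xi_2\|^2
\]
whenever $(\xi_1,\xi_2) \in (\widebar{B}(x(t-T),R_t^0))^2$ and $\|\xi_1-\xi_2\|\leq r_t$. For fixed $\xi_1$ in the interior of the ball and any $v\in\mathbb{R}^{n_x}$, applying Taylor expansion at $\xi_2 = \xi_1$ to $l(t-T,t,\xi_1,\xi_1+sv,u)$, dividing by $s^2$ and letting $s \to 0^+$, the vanishing of the value and gradient at $\xi_1$ give
\[
\tfrac{1}{2}\, v^{\top} \diff^2_{\xi_2} l(t-T,t,\xi_1,\xi_1,u) v \geq c_t \|v\|^2,
\]
so $\diff^2_{\xi_2} l(t-T,t,\xi_1,\xi_1,u) \succeq 2 c_t I \succ 0$ for every $\xi_1$ on the diagonal of a slightly shrunken ball.

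The remaining and most delicate step is to upgrade this diagonal positive definiteness to positive definiteness on an entire product-of-balls neighborhood. Here I would invoke continuity of $(\xi_1,\xi_2) \mapsto \diff^2_{\xi_2} l(t-T,t,\xi_1,\xi_2,u)$ from Lemma \ref{lem:1_2_diff_l} together with continuity of the smallest eigenvalue in its argument: the open set where the Hessian is positive definite contains the compact diagonal $\{(\xi,\xi) : \xi \in \widebar{B}(x(t-T),R_t^0/2)\}$, and a Lebesgue-number/uniform-continuity argument produces $R_t \in (0,R_t^0/2]$ such that $(\widebar{B}(x(t-T),R_t))^2$ is contained in this open set, yielding \eqref{eq:hess_pos_prop}. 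The main obstacle is precisely this last step, since one must carefully trade the ball radius against the modulus of continuity of the Hessian; note that no uniformity in $t$ is needed, consistent with the pointwise-in-$t$ nature of Definition \ref{def:weakly_persistent_input}.
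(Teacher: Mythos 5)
Your proof is correct and follows essentially the same route as the paper's: the mean-value (Lagrange) form of the Taylor expansion plus a uniform smallest-eigenvalue bound for the forward direction, and finite sensitivity combined with a Taylor expansion at the diagonal and continuity of the Hessian for the converse. The only differences are cosmetic — you establish positive definiteness along the whole diagonal before extending via a Lebesgue-number argument, whereas the paper only needs it at the single point $(x(t-T),x(t-T))$ before invoking continuity, and your explicit minimisation of the smallest eigenvalue over the compact product ball in the forward direction is in fact slightly more careful than the paper's phrasing, which leaves the uniformity of $\mu_t$ in $(\xi_1,\xi_2)$ implicit.
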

\begin{proof}
See Appendix \ref{app:hessian_pos_definite_weak_per}.
\end{proof}

One of the main advantage of the concept of weak persistence is that it can be checked by computing the Observability Grammian of system \eqref{eq:general_dyn_continous_time} on a time interval of constant length. Its definition is stated in Definition \ref{def:grammian}.

% \begin{definition}[Observability Grammian]\label{def:grammian}
% Let $T>0$  and $x_0 \in \mathbb{R}^{n_x}$ be a time horizon and an initial condition. For $t\geq T$ and $\xi \in \mathbb{R}^{n_x}$, we rewrite the cumulative measurement error between $\phi_f(t;t-T,\xi,u)$ and the reference trajectory both from system \eqref{eq:general_dyn_continous_time} on $[t-T,t]$ as $c(t,T,\xi,u)$ such that:
% \begin{align}
%     c(t,T,\xi,u)=&l(t-T,t,\xi,x(t-T),u).\label{eq:cumulative_error_state}
%     %  c(t,T,\xi,u)=&\int_{t-T}^{t}{\Vert h(\phi_f(s;t,\xi,u),u(s)) -y(s)\Vert}^2 \mathrm{d}s.
% \end{align}
% The Observability Grammian   of system \eqref{eq:general_dyn_continous_time} at time $t$, denoted by $\mathcal{C}(t,T,x(t-T),u)$ is defined as the Hessian of $c(t,T,\cdot,u)$ taken at $x(t-T)$ and reads:
% \begin{align}
%   \mathcal{C}&(t,T,x(t-T),u)= {\diff}^2_{\xi}c(t,T,\cdot,u)\vert_{\xi=x(t-T)}\notag \\
%                   =& \int_{t-T}^{t}\Phi_f^T H^T(x(s),u(s)) H(x(s),u(s))\Phi_f ds, \label{eq:grammian_obs_gen}
% \end{align}
% where $H(x(s),u(s))=\diff h(\cdot,u(s))\vert_{\xi=x(s)}$ and $\Phi_f(s;t-T,x(t-T),u)={\diff}\phi_f(s;t-T,\cdot,u)\vert_{\xi=x(t-T)}$.
% \end{definition}

\begin{definition}[Observability Grammian]\label{def:grammian}
Let $T>0$ be a time horizon, $x_0 \in \mathbb{R}^{n_x}$ be an initial condition and $u$ be an input trajectory. For $t\geq T$, the \emph{Observability Grammian}   of system \eqref{eq:general_dyn_continous_time} on $[t-T,t]$, denoted by $\mathcal{C}(t,T,x(t-T),u)$ is defined as half the Hessian of $l(t-T,t,x(t-T),\cdot,u)$ taken at $x(t-T)$ and reads:
\begin{align}
   \mathcal{C}(t,T,x(t-T),u)=&\frac{1}{2}\diff^2_{\xi_2}l(t-T,t,x(t-T),x(t-T),u),\notag\\
                                 =&\int_{t-T}^{t}\Phi_f^T H^T(x(s),u(s)) H(x(s),u(s))\Phi_f \mathrm{d}s, \label{eq:grammian_obs_gen}
\end{align}
where $H(x(s),u(s))=\diff_x h(x(s),u(s))$ and $\Phi_f(s;t-T,x(t-T),u)={\diff}_x\phi_f(s;t-T,\cdot,u)$.
\end{definition}

Lemma \ref{lem:obs_gram_hess} states the link between the Observability Grammian and the hessian of $l$ around $x(t-T)$.

\begin{lemma}\label{lem:obs_gram_hess}

There exists $T>0$ such that for any $t\geq T$,
\begin{align}
     \mathcal{C}(t,T,x(t-T),u)\succ 0,\label{eq:gram_pos_prop}
\end{align}
if and only if there exists $T>0$ such that for any $t\geq T$, there exists $R_t>0$ such that for any $(\xi_1,\xi_2)\in (\widebar{B}(x(t-T),R_t))^2$:
\begin{align}
    \mathrm{d}^2_{\xi_2}l(t-T,t,\xi_1,\xi_2,u)\succ 0.\label{eq:hess_pos_lem}
\end{align}

\end{lemma}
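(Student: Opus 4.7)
The statement is a local-to-pointwise equivalence: positive definiteness of the full Hessian of $l$ on a small product neighborhood of $(x(t-T),x(t-T))$ is equivalent to positive definiteness of the Observability Grammian, which is (up to a factor of two) the Hessian evaluated exactly at the diagonal point $(x(t-T),x(t-T))$. My plan is to prove each direction separately; one is an open-set/continuity argument and the other is immediate substitution.

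For the reverse implication ($\Leftarrow$): assume there exists $T>0$ such that for every $t \geq T$ we have $R_t>0$ with $\mathrm{d}^2_{\xi_2}l(t-T,t,\xi_1,\xi_2,u)\succ 0$ for all $(\xi_1,\xi_2)\in (\widebar{B}(x(t-T),R_t))^2$. Since $x(t-T)$ belongs to this ball, I take the particular choice $\xi_1=\xi_2=x(t-T)$ and read off the positive definiteness of $\mathrm{d}^2_{\xi_2}l(t-T,t,x(t-T),x(t-T),u)$, which by Definition \ref{def:grammian} is exactly $2\mathcal{C}(t,T,x(t-T),u)$. This yields \eqref{eq:gram_pos_prop} with the same $T$.

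For the forward implication ($\Rightarrow$): assume $\mathcal{C}(t,T,x(t-T),u)\succ 0$ for all $t\geq T$. By Lemma \ref{lem:1_2_diff_l} in the appendix (which establishes existence and continuity of the first and second differentials of $l$), the map $(\xi_1,\xi_2)\mapsto \mathrm{d}^2_{\xi_2}l(t-T,t,\xi_1,\xi_2,u)$ is continuous on $(\mathbb{R}^{n_x})^2$; this rests on the $C^2$ regularity of $f$ and $h$ together with standard smooth dependence of the flow $\phi_f$ on initial conditions. At the base point $(x(t-T),x(t-T))$ the Hessian equals $2\mathcal{C}(t,T,x(t-T),u)\succ 0$. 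Since the cone of symmetric positive definite matrices is open in the space of symmetric matrices, there is an open neighborhood of $(x(t-T),x(t-T))$ in $(\mathbb{R}^{n_x})^2$ on which the Hessian remains positive definite. I then pick $R_t>0$ small enough that $\widebar{B}(x(t-T),R_t)\times \widebar{B}(x(t-T),R_t)$ is contained in this neighborhood, giving \eqref{eq:hess_pos_lem} for this $R_t$. Note that no uniformity in $t$ is required, so $R_t$ is allowed to shrink arbitrarily as $t$ grows.

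Neither direction involves a serious technical obstacle; the only subtlety is citing the correct continuity property of $\mathrm{d}^2_{\xi_2}l$ in $(\xi_1,\xi_2)$, which is why the plan relies on Lemma \ref{lem:1_2_diff_l} rather than re-deriving the formulas for the Hessian from scratch. The argument does not use the functional shape of $\mathcal{C}$ given in \eqref{eq:grammian_obs_gen}—only the identification $\mathcal{C}=\tfrac12 \mathrm{d}^2_{\xi_2}l$ at the diagonal point together with continuity and openness of the positive-definite cone.
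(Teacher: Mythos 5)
Your proof is correct and follows essentially the same route as the paper: the converse is the substitution $(\xi_1,\xi_2)=(x(t-T),x(t-T))$, and the forward direction uses joint continuity of $(\xi_1,\xi_2)\mapsto \mathrm{d}^2_{\xi_2}l(t-T,t,\xi_1,\xi_2,u)$ together with openness of the positive-definite cone (the paper phrases this as continuity of the smallest eigenvalue) to extract a $t$-dependent radius $R_t$. Your explicit remark that no uniformity in $t$ is needed matches the paper's intent, since the uniform version is handled separately in Lemma \ref{lem:obs_gram_hess_regular}.
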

\begin{proof}
We first recall  that $\mathcal{C}(t,T,x(t-T),u)=\frac{1}{2}\mathrm{d}^2_{\xi_2}l(t-T,t,x(t-T),x(t-T),u)$. Thus by invoking the same continuity argument as in the proof of Proposition \ref{prop:hessian_pos_definite_weak_per} in Appendix \ref{app:hessian_pos_definite_weak_per}, one can show that if there exists $T>0$ such that for any $t\geq T$ \eqref{eq:gram_pos_prop} holds, then  there exists $T>0$ such that for any $t\geq T$, there exists $R_t>0$ such that for any $(\xi_1,\xi_2)\in (\widebar{B}(x(t-T),R_t))^2$:
\begin{align*}
    \mathrm{d}^2_{\xi_2}l(t-T,t,\xi_1,\xi_2,u)\succ 0.
\end{align*}
The converse follows by setting $(\xi_1,\xi_2)=(x(t-T),x(t-T))$ in \eqref{eq:hess_pos_lem}.
\end{proof}

Finally, Corollary \ref{cor:grammian_pos_definite_weak_per} gives another characterization of weakly persistent input trajectories in terms of positive definiteness of the Observability Grammian that is inspired by \cite{powel_empirical_2015}.

\begin{corollary}\label{cor:grammian_pos_definite_weak_per}
Let $x_0\in \mathbb{R}^{n_x}$ and $u$ be an input trajectory. If there exists $T>0$ such that for any $t\geq T$:
\begin{align}
     \mathcal{C}(t,T,x(t-T),u)\succ 0,\label{eq:gram_pos_prop_2}
\end{align}
then $u$ is a weakly persistent input trajectory at $x_0$.

Conversely, if $u$ is a weakly persistent input trajectory and all the associated $\mathcal{K}$-functions $\kappa_t$ have finite sensitivity then there exists $T>0$ such that for any $t\geq T$, \eqref{eq:gram_pos_prop_2} holds.
\end{corollary}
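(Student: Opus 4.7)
The plan is to derive the corollary by composing the two results that immediately precede it: Proposition \ref{prop:hessian_pos_definite_weak_per}, which relates weak persistence to positive definiteness of $\mathrm{d}^2_{\xi_2} l(t-T,t,\xi_1,\xi_2,u)$ on a neighbourhood of $(x(t-T),x(t-T))$, and Lemma \ref{lem:obs_gram_hess}, which in turn relates such a neighbourhood Hessian condition to positive definiteness of the Observability Grammian $\mathcal{C}(t,T,x(t-T),u)$ along the reference trajectory. The corollary is exactly the composition of these two equivalences, so no new analytic argument is needed.

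For the direct implication, I would assume \eqref{eq:gram_pos_prop_2}, apply the forward direction of Lemma \ref{lem:obs_gram_hess} to extract radii $R_t > 0$ and the pointwise positive definiteness $\mathrm{d}^2_{\xi_2} l(t-T,t,\xi_1,\xi_2,u) \succ 0$ on $(\widebar{B}(x(t-T),R_t))^2$, and then invoke the forward direction of Proposition \ref{prop:hessian_pos_definite_weak_per} to conclude that $u$ is weakly persistent at $x_0$. For the converse, I would start from weak persistence with all $\kappa_t$ having finite sensitivity and apply the converse direction of Proposition \ref{prop:hessian_pos_definite_weak_per} to obtain the neighbourhood positive definiteness of $\mathrm{d}^2_{\xi_2} l$. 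Then the converse direction of Lemma \ref{lem:obs_gram_hess}, which is obtained by simply evaluating the Hessian at $(\xi_1,\xi_2)=(x(t-T),x(t-T))$ and using the identity $\mathcal{C}(t,T,x(t-T),u) = \tfrac{1}{2}\mathrm{d}^2_{\xi_2} l(t-T,t,x(t-T),x(t-T),u)$ from Definition \ref{def:grammian}, yields \eqref{eq:gram_pos_prop_2}.

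There is no substantive obstacle here, since the whole content of the corollary has been pre-packaged into Proposition \ref{prop:hessian_pos_definite_weak_per} and Lemma \ref{lem:obs_gram_hess}. The only point worth verifying carefully is that the quantifier structures line up: in particular, the $T$ and $R_t$ produced by the forward direction of Lemma \ref{lem:obs_gram_hess} are precisely of the form required as input by Proposition \ref{prop:hessian_pos_definite_weak_per}, and conversely. Since both intermediate statements are phrased in exactly matching ``there exists $T > 0$, for any $t \geq T$, there exists $R_t > 0$, for any $(\xi_1,\xi_2)$ in the corresponding closed ball'' form, this matching is automatic. The proof can therefore be written in two short paragraphs, each consisting of one application of Lemma \ref{lem:obs_gram_hess} followed by one application of Proposition \ref{prop:hessian_pos_definite_weak_per}, or the reverse order for the converse.
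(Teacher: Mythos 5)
Your proposal is correct and follows exactly the paper's own proof, which likewise obtains the corollary by composing Lemma \ref{lem:obs_gram_hess} with Proposition \ref{prop:hessian_pos_definite_weak_per} in the two directions you describe. The additional remark that the quantifier structures of the two intermediate statements match is a fair point of care, and it indeed holds, so nothing further is needed.
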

\begin{proof}
The result follows from Lemma \ref{lem:obs_gram_hess} and Proposition \ref{prop:hessian_pos_definite_weak_per}.
\end{proof}
\subsection{Characterization of weakly regularly persistent inputs}
In this section, we derive results in Proposition \ref{prop:hessian_lower_bound_definite_weak_reg_per} and \ref{prop:converse_hessian_lower_bound_weak_reg_per} that are the counterpart of those of Section \ref{sec:charac_weak_per_input} in the case of a weakly \emph{regularly} persistent input trajectory. The main conceptual difference between Proposition \ref{prop:hessian_pos_definite_weak_per} and Proposition \ref{prop:hessian_lower_bound_definite_weak_reg_per} is that one now requires the Hessian of $l(t-T,t,\xi_1,\cdot,u)$  at $\xi_2$ to be lower bounded independently of $t$ for $(\xi_1,\xi_2)$ in a neighbourhood of $x(t-T)$ whose radius is also independent of $t$.
\begin{proposition}\label{prop:hessian_lower_bound_definite_weak_reg_per}
Let $x_0\in \mathbb{R}^{n_x}$ be an initial condition and $u$ be an input trajectory.  If there exist $T>0$, $\mu>0$ and $R>0$ such that for any $t\geq T$ and for any $(\xi_1,\xi_2)\in (\widebar{B}(x(t-T),R))^2$:
\begin{align}
    \mathrm{d}^2_{\xi_2}l(t-T,t,\xi_1,\xi_2,u)\succeq \mu I_{n_x},\label{eq:hess_low_bound_prop}
\end{align}
where $I_{n_x}$ denotes the identity matrix of $\mathbb{R}^{n_x\times n_x}$,  then $u$ is a weakly regularly persistent input trajectory at $x_0$.

% Conversely, if $u$ is a weakly persistent input trajectory at $x_0$ and all the associated $\mathcal{K}$-functions $\kappa_t$ have finite sensitivity, then there exists $T>0$ such that for any $t\geq T$, there exists $R_t>0$ such that for any $(\xi_1,\xi_2)\in (\widebar{B}(x(t-T),R_t))^2$ \eqref{eq:hess_pos_prop} holds.
\end{proposition}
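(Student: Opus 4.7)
The plan is to obtain the weakly regularly persistent bound via a second-order Taylor expansion of $l(t-T,t,\xi_1,\cdot,u)$ around $\xi_1$, exploiting the fact that the zeroth and first order terms vanish at $\xi_2=\xi_1$.

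First, I would invoke Lemma \ref{lem:cost_MHE_state} to record that for every $\xi_1\in\mathbb{R}^{n_x}$ and $t\geq T$,
\begin{align*}
l(t-T,t,\xi_1,\xi_1,u)=0,\qquad \mathrm{d}_{\xi_2}l(t-T,t,\xi_1,\xi_1,u)=0.
\end{align*}
The function $\xi_2\mapsto l(t-T,t,\xi_1,\xi_2,u)$ is $C^2$ (its second differential being the one introduced just before the proposition and detailed in Lemma \ref{lem:1_2_diff_l}). Hence Taylor's theorem with integral remainder yields, for any $(\xi_1,\xi_2)\in(\mathbb{R}^{n_x})^2$,
\begin{align*}
l(t-T,t,\xi_1,\xi_2,u)=\int_0^1(1-s)\,(\xi_2-\xi_1)^\top \mathrm{d}^2_{\xi_2}l\bigl(t-T,t,\xi_1,\xi_1+s(\xi_2-\xi_1),u\bigr)(\xi_2-\xi_1)\,\mathrm{d}s.
\end{align*}

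Next, I would use convexity of the closed Euclidean ball $\widebar{B}(x(t-T),R)$. If $(\xi_1,\xi_2)\in(\widebar{B}(x(t-T),R))^2$, then for every $s\in[0,1]$ the convex combination $\xi_1+s(\xi_2-\xi_1)$ also lies in $\widebar{B}(x(t-T),R)$. Therefore the hypothesis \eqref{eq:hess_low_bound_prop} can be applied to the Hessian inside the integral, yielding
\begin{align*}
l(t-T,t,\xi_1,\xi_2,u)\geq \mu\,\Vert\xi_2-\xi_1\Vert^2\int_0^1(1-s)\,\mathrm{d}s=\frac{\mu}{2}\,\Vert\xi_2-\xi_1\Vert^2.
\end{align*}
Setting $\kappa(r)=\frac{\mu}{2}r^2$ produces a $\mathcal{K}$-function that is manifestly independent of $t$, and the triple $(T,R,\kappa)$ then fulfills Definition \ref{def:weakly_regularly_persistent_input}.

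There is no real obstacle here: the uniform (in $t$) lower bound $\mu I_{n_x}$ on the Hessian, combined with the $t$-independent radius $R$, is exactly what allows the resulting $\kappa$ to be $t$-independent — this is precisely the conceptual distinction with the weakly persistent case, where only positive definiteness (not a uniform lower bound) is available and thus only a $t$-dependent $\kappa_t$ can be produced. The only minor point to verify carefully is that the Taylor remainder form applies; this follows directly from the $C^2$ regularity of $l(t-T,t,\xi_1,\cdot,u)$ guaranteed by the smoothness assumptions on $f$ and $h$ together with Lemma \ref{lem:1_2_diff_l}.
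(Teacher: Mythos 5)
Your proof is correct and follows essentially the same route as the paper: the paper also reduces the claim to a second-order Taylor expansion of $l(t-T,t,\xi_1,\cdot,u)$ about $\xi_1$ (using the mean-value form of the remainder rather than your integral form, both of which place the evaluation point in $\widebar{B}(x(t-T),R)$ by convexity) and concludes with the same $t$-independent choice $\kappa(r)=\frac{\mu}{2}r^2$. Your closing remark about why the uniform bound $\mu I_{n_x}$ and the $t$-independent radius $R$ are exactly what upgrade weak persistence to weak \emph{regular} persistence matches the paper's own discussion.
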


\begin{proof}
% See Appendix \ref{app:hessian_lower_bound_definite_weak_reg_per}.

The result follows from the mean value form of the Taylor expansion of $l(t-T,t,\xi_1,\cdot,u)$ and is similar to that of Proposition \ref{prop:hessian_pos_definite_weak_per} in Appendix \ref{app:hessian_pos_definite_weak_per} with $\kappa(r)=\frac{\mu}{2}r^2$.
\end{proof}
% \begin{proof}
%  Assume that there exist $T>0$, $\mu>0$ and $R>0$ such that for any $t\geq T$ and for any $(\xi_1,\xi_2)\in (\widebar{B}(x(t-T),R))^2$, \eqref{eq:hess_low_bound_prop} holds. Similarly to the proof of Proposition \ref{prop:hessian_pos_definite_weak_per}, one gets  from Lemma \ref{lem:cost_MHE_state} and from the mean value form of the Taylor expansion of $l(t-T,t,\xi_1,\cdot,u)$ at $\xi_1$ that for any $(\xi_1,\xi_2)\in (\widebar{B}(x(t-T),R))^2$: 
% \begin{align}
%     l&(t,T,\xi_1,\xi_2,u)=\notag\\
%                     &\frac{1}{2}(\xi_2-\xi_1)^T\mathrm{d}^2_{\xi_2}l(t-T,t,\xi_1,\chi,u)(\xi_2-\xi_1),\label{eq:mean_value_l_state}
% \end{align}
% with $\chi=(1-\lambda)\xi_1+\lambda \xi_2$ and $0<\lambda<1$. Since $\chi \in\widebar {B}(x(t-T),R) $, one has, from  \eqref{eq:hess_low_bound_prop}, that $\mathrm{d}^2_{\xi_2}l(t-T,t,\xi_1,\chi,u)\succeq \mu$. Thus, for any $(\xi_1,\xi_2)\in (\widebar{B}(x(t-T),R_t))^2$, one has: 
% \begin{align*}
%     l&(t,T,\xi_1,\xi_2,u)\geq\frac{\mu}{2}\Vert\xi_2-\xi_1\Vert^2,
% \end{align*}  
% and the results is proven by choosing $\kappa(r)=\frac{\mu}{2}r^2$.
% \end{proof}

A converse of Proposition \ref{prop:hessian_lower_bound_definite_weak_reg_per} in the spirit of  the second statement of Proposition \ref{prop:hessian_pos_definite_weak_per} is not straightforward. Indeed, the proof of the latter uses a continuity argument of  $\mathrm{d}^2_{\xi_2}l(t-T,t,\cdot,\cdot,u)$ at $x(t-T)$ to prove the existence of an adequate radius $R_t$. Because of the explicit dependence of $\mathrm{d}^2_{\xi_2} l$ on $t$, this argument does not allow one  to obtain a radius $R$ that is independent of $t$. Thus, new assumptions are needed to bridge the gap.

\begin{hypothesis}\label{as:three_times_diff}
The functions $f$ and $h$ are three times continuously differentiable.
\end{hypothesis}
\begin{hypothesis}\label{as:U_compact}
The set $U$ of feasible inputs is compact.
\end{hypothesis}

\begin{definition}\label{def:regular_bound}
Let $x_0 \in \mathbb{R}^{n_x}$ be an initial condition, $T>0$ a time horizon and  $u$ be an input trajectory. System \eqref{eq:general_dyn_continous_time} is said to be \emph{regularly bounded} at $x_0$ with horizon $T$ if there exist $R>0$ and $L>0$ such that for any $t\geq T$, any $s\in [t-T,t]$ and any $\xi\in \widebar{B}(x(t-T),R)$, 
\begin{align}
    \Vert  \phi_f(s;t-T,\xi,u) \Vert \leq L,\label{eq:regular_bound}
\end{align}
where  $x(t-T)= \phi_f(t-T;0,x_0,u)$.
\end{definition}

\begin{lemma}\label{lem:regular_bound_diff}
Let $x_0 \in \mathbb{R}^{n_x}$ be an initial condition, $T>0$ a time horizon and  $u$ be an input trajectory. Under Hypothesis \ref{as:U_compact}, if System \eqref{eq:general_dyn_continous_time} is regularly bounded at $x_0$ with horizon $T$ then there exist $L_1>0$, $L_2>0$ and $R>0$ such that for any $t\geq T$, any $s\in [t-T,t]$ and any $\xi\in \widebar{B}(x(t-T),R)$, 
\begin{align}
    \Vert  \Phi_f(s;t-T,\xi,u) \Vert &\leq L_1,\label{eq:regular_bound_1_diff}\\
      \Vert  \diff_{\xi} \Phi_f(s;t-T,\xi,u)\Vert &\leq L_2.\label{eq:regular_bound_2_diff}
      \intertext{Moreover, under Hypothesis \ref{as:three_times_diff}, there exist $L_3>0$ and $R>0$ such that for any $t\geq T$, any $s\in [t-T,t]$ and any $\xi\in \widebar{B}(x(t-T),R)$}
       \Vert  \diff^2_{\xi_2} \Phi_f(s;t-T,\xi,u)\Vert &\leq L_3,\label{eq:regular_bound_3_diff}
\end{align}
where $ \Vert\cdot\Vert$ denotes here the appropriate operator norm derived from the Euclidian norm.
\end{lemma}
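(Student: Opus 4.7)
The plan is to derive each of \eqref{eq:regular_bound_1_diff}--\eqref{eq:regular_bound_3_diff} from the variational equation satisfied by the corresponding derivative of the flow with respect to the initial condition, and then apply Gr\"onwall's inequality on the window $[t-T,t]$ of fixed length $T$. The key enabling observation is that, by regular boundedness, every trajectory $s\mapsto\phi_f(s;t-T,\xi,u)$ with $s\in[t-T,t]$ and $\xi\in\widebar{B}(x(t-T),R)$ stays inside the fixed closed ball $\widebar{B}(0,L)$; combined with Hypothesis \ref{as:U_compact}, this forces $f$ and its partial derivatives in $x$ to be evaluated on the $t$-independent compact set $K:=\widebar{B}(0,L)\times U$, where by continuity they are bounded. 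This is what ultimately delivers constants $L_1,L_2,L_3$ independent of $t\geq T$.

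For \eqref{eq:regular_bound_1_diff}, I would invoke the classical matrix variational equation
\begin{align*}
\dot{\Phi}_f(s;t-T,\xi,u)&=\partial_x f(\phi_f(s;t-T,\xi,u),u(s))\,\Phi_f(s;t-T,\xi,u),\\
\Phi_f(t-T;t-T,\xi,u)&=I_{n_x},
\end{align*}
together with the bound $\sup_K\Vert\partial_x f\Vert\leq M_1$ coming from the standing $C^2$ regularity of $f$ and compactness of $K$. Gr\"onwall's inequality on an interval of length $T$ then yields the uniform estimate $\Vert\Phi_f(s;t-T,\xi,u)\Vert\leq e^{M_1 T}=:L_1$.

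For \eqref{eq:regular_bound_2_diff}, I would differentiate the above ODE once more with respect to $\xi$, which is legitimate by the classical parameter-dependence theorem and the $C^2$ regularity of $f$. This produces a linear ODE for $\diff_\xi\Phi_f$ of the form
\begin{align*}
\tfrac{d}{ds}\diff_\xi\Phi_f(s)&=\partial_x f(\phi_f,u(s))\,\diff_\xi\Phi_f(s)+\partial^2_x f(\phi_f,u(s))\bigl[\Phi_f(s),\Phi_f(s)\bigr],
\end{align*}
with zero initial datum at $s=t-T$. The source term is pointwise bounded by $M_2 L_1^2$ using the first step and the boundedness of $\partial^2_x f$ on $K$, so a second application of Gr\"onwall on $[t-T,t]$ delivers a bound $L_2$ independent of $t$ and $\xi$. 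For \eqref{eq:regular_bound_3_diff}, Hypothesis \ref{as:three_times_diff} provides existence, continuity, and hence boundedness of $\partial^3_x f$ on $K$; differentiating the variational equation one more time and applying Gr\"onwall to the resulting linear ODE, whose source terms are polynomial in $\Phi_f$ and $\diff_\xi\Phi_f$ with coefficients controlled by $M_2$ and $M_3$, yields the last constant $L_3$.

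The only point that really needs to be watched is time-uniformity: one must check that every constant produced along the way depends solely on $T$, $L$, $R$ and on bounds of $\partial^k f$ over the fixed compact set $K$, and never on $t$ itself. This is automatic from the construction, since both the interval length $T$ and the compact set $K$ are the same for every $t\geq T$. I therefore do not expect any conceptual obstacle; the routine part will simply be the bookkeeping of source terms produced by iterated differentiation of the variational equation, a standard exercise in ODE sensitivity analysis.
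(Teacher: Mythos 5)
Your proposal is correct and follows essentially the same route as the paper: the paper likewise writes the variational equation for $\Phi_f$ (via Theorem 2.3.2 of \cite{bressan_introduction_2007}), bounds $\diff_x f$ uniformly by a constant $\sigma_1$ using regular boundedness together with Hypothesis \ref{as:U_compact} and continuity, and applies Gr\"onwall on the length-$T$ window to get $L_1=\exp(\sigma_1 T)$, then repeats the argument on the augmented system for the higher derivatives. The only cosmetic difference is that you make the compact set $K=\widebar{B}(0,L)\times U$ and the iterated source-term bookkeeping explicit, whereas the paper leaves the second and third steps as ``the same argument.''
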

\begin{proof}
See Appendix \ref{app:regular_bound_diff}.
\end{proof}

\begin{lemma}\label{lem:obs_gram_hess_regular}
Let $x_0 \in \mathbb{R}^{n_x}$ be an initial condition, $T>0$ a time horizon and  $u$ be an input trajectory. Under Hypothesis  \ref{as:three_times_diff} and \ref{as:U_compact}, if System \eqref{eq:general_dyn_continous_time} is regularly bounded at $x_0$ with horizon $T$ then the following statements are equivalent:

\begin{enumerate}[label=(\alph*)]
\item \label{item:lower_bound_grammian_obs_lem} There exists $\mu>0$ such that for any $t\geq T$:
\begin{align*}
     \mathcal{C}(t,T,x(t-T),u)\succeq \mu I_{n_x},
\end{align*}
where $\mathcal{C}$ is defined in Definition \ref{def:grammian};
\item \label{item:lower_bound_hess_lem}There exist $R>0$, $\mu>0$ such that for any $t\geq T$ and any $(\xi_1,\xi_2)\in (\widebar{B}(x(t-T),R))^2$:
\begin{align}
    \mathrm{d}^2_{\xi_2}l(t-T,t,\xi_1,\xi_2,u)\succeq \mu I_{n_x}.\label{eq:hessian_lower_bound_lem}
\end{align}
\end{enumerate}
\end{lemma}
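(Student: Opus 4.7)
The plan is to establish \ref{item:lower_bound_grammian_obs_lem} $\Rightarrow$ \ref{item:lower_bound_hess_lem} by a perturbation argument around the base point $(x(t-T),x(t-T))$, and to deduce the converse implication by simply evaluating \eqref{eq:hessian_lower_bound_lem} at this base point. The heart of the argument is a uniform (in $t$) Lipschitz estimate on $(\xi_1,\xi_2)\mapsto \mathrm{d}^2_{\xi_2}l(t-T,t,\xi_1,\xi_2,u)$ on $(\widebar{B}(x(t-T),R))^2$, which is where Hypotheses \ref{as:three_times_diff}, \ref{as:U_compact} and regular boundedness all play a role.

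First I would write out the explicit expression of $\mathrm{d}^2_{\xi_2}l$ provided by Lemma \ref{lem:1_2_diff_l}: it is an integral over $[t-T,t]$ of a polynomial expression in $H(\phi_f(s;t-T,\xi_2,u),u(s))$, $\Phi_f(s;t-T,\xi_2,u)$, $\diff_x H$, $\diff_{\xi_2}\Phi_f$, and the output mismatch $h(\phi_f(s;t-T,\xi_1,u),u(s))-h(\phi_f(s;t-T,\xi_2,u),u(s))$. By Definition \ref{def:regular_bound}, all involved trajectories $\phi_f(\cdot;t-T,\xi_i,u)$ with $\xi_i\in\widebar{B}(x(t-T),R)$ stay in a common compact set of $\mathbb{R}^{n_x}$; combined with the compactness of $U$ from Hypothesis \ref{as:U_compact} and the fact that $h$ is three times continuously differentiable (Hypothesis \ref{as:three_times_diff}), the maps $h$, $\diff_x h$, $\diff^2_x h$, $\diff^3_x h$ are uniformly bounded on the values taken along these trajectories, with bounds independent of $t\geq T$. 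Lemma \ref{lem:regular_bound_diff} then supplies uniform bounds $L_1,L_2,L_3$ on $\Phi_f$, $\diff_\xi\Phi_f$, $\diff^2_\xi\Phi_f$. Differentiating once more with respect to $(\xi_1,\xi_2)$ under the integral sign (which is legitimate on the compact interval $[t-T,t]$ of constant length $T$) produces an integrand that is a polynomial in quantities of this same list, hence uniformly bounded. Integrating over an interval of length $T$ gives a Lipschitz constant $L_0$ independent of $t$ such that
\begin{align*}
\bigl\Vert\mathrm{d}^2_{\xi_2}l(t-T,t,\xi_1,\xi_2,u)-\mathrm{d}^2_{\xi_2}l(t-T,t,x(t-T),x(t-T),u)\bigr\Vert \leq L_0\bigl(\Vert\xi_1-x(t-T)\Vert+\Vert\xi_2-x(t-T)\Vert\bigr),
\end{align*}
valid for all $t\geq T$ and $(\xi_1,\xi_2)\in(\widebar{B}(x(t-T),R))^2$, after possibly shrinking $R$ to the common value supplied by Lemma \ref{lem:regular_bound_diff}.

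Assuming now \ref{item:lower_bound_grammian_obs_lem}, the identity $\mathcal{C}(t,T,x(t-T),u)=\tfrac12 \mathrm{d}^2_{\xi_2}l(t-T,t,x(t-T),x(t-T),u)$ from Definition \ref{def:grammian} gives $\mathrm{d}^2_{\xi_2}l(t-T,t,x(t-T),x(t-T),u)\succeq 2\mu I_{n_x}$. Combining this with the Lipschitz estimate yields
\begin{align*}
\mathrm{d}^2_{\xi_2}l(t-T,t,\xi_1,\xi_2,u)\succeq (2\mu-2L_0 R)\, I_{n_x}
\end{align*}
for $(\xi_1,\xi_2)\in(\widebar{B}(x(t-T),R))^2$, and it suffices to replace $R$ by $\min(R,\mu/(2L_0))$ to obtain \eqref{eq:hessian_lower_bound_lem} with constant $\mu$. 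The converse implication \ref{item:lower_bound_hess_lem} $\Rightarrow$ \ref{item:lower_bound_grammian_obs_lem} is then immediate: setting $\xi_1=\xi_2=x(t-T)$ in \eqref{eq:hessian_lower_bound_lem} and using Definition \ref{def:grammian} one more time gives $\mathcal{C}(t,T,x(t-T),u)\succeq \tfrac{\mu}{2} I_{n_x}$ for every $t\geq T$.

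The main obstacle is the uniform-in-$t$ Lipschitz step. The difficulty is purely one of book-keeping: one must check that every derivative that appears when differentiating $\mathrm{d}^2_{\xi_2}l$ with respect to $(\xi_1,\xi_2)$ is either a derivative of $h$ of order at most three (controlled by Hypothesis \ref{as:three_times_diff} together with compactness of $U$ and the trajectory bound from regular boundedness) or a derivative of the flow $\Phi_f$ of order at most two (controlled by Lemma \ref{lem:regular_bound_diff}). No term of order four in $h$ or of order three in $\Phi_f$ should appear, which is precisely why Hypothesis \ref{as:three_times_diff} together with \eqref{eq:regular_bound_3_diff} is enough. Once this is verified, the proof reduces to the elementary perturbation argument sketched above.
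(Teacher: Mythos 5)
Your proposal is correct and follows essentially the same route as the paper's proof: both establish a $t$-uniform Lipschitz bound on $(\xi_1,\xi_2)\mapsto \mathrm{d}^2_{\xi_2}l(t-T,t,\xi_1,\xi_2,u)$ by bounding its differential via Lemma \ref{lem:regular_bound_diff}, regular boundedness, and Hypotheses \ref{as:three_times_diff} and \ref{as:U_compact}, then shrink $R$ so that the perturbation $2L_0R$ stays below $2\mu$, and obtain the converse by evaluating at $(x(t-T),x(t-T))$. Your book-keeping observation that only derivatives of $h$ up to order three and of $\Phi_f$ up to order two appear is exactly the point the paper's appendix relies on.
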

\begin{proof}
See Appendix \ref{app:obs_gram_hess_regular}.
\end{proof}
% A regularly bounded system at $x_0$ with horizon $T$ is such that its solutions starting from  $\xi\in \widebar{B}(x(t-T),R)$ are contained in a `tube' that goes from $t-T$ to $t$,   

\begin{proposition}\label{prop:converse_hessian_lower_bound_weak_reg_per}
Let $x_0 \in \mathbb{R}^{n_x}$ be an initial condition and $u$ be an input trajectory.
Under Hypotheses \ref{as:three_times_diff} and \ref{as:U_compact},  suppose that $u$ is a weakly regularly persistent input trajectory at $x_0$ with an associated $\mathcal{K}$-function $\kappa$ that has finite sensitivity and an associated time horizon $T$ such that System \eqref{eq:general_dyn_continous_time} is regularly bounded at $x_0$ with horizon $T$.  Then, there exist $T>0$, $R>0$, $\mu>0$ such that for any $t\geq T$ and any $(\xi_1,\xi_2)\in (\widebar{B}(x(t-T),R))^2$:
\begin{align}
    \mathrm{d}^2_{\xi_2}l(t-T,t,\xi_1,\xi_2,u)\succeq \mu I_{n_x}.\label{eq:hess_low_bound_prop_2}
\end{align}
\end{proposition}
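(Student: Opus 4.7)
The plan is to reduce the claim to a uniform-in-$t$ lower bound on the Observability Grammian $\mathcal{C}(t, T, x(t-T), u)$, and then invoke Lemma~\ref{lem:obs_gram_hess_regular}. Under the hypotheses at hand (regular boundedness together with Hypotheses~\ref{as:three_times_diff} and \ref{as:U_compact}), that lemma converts the condition $\mathcal{C}(t, T, x(t-T), u) \succeq \mu' I_{n_x}$ for all $t \geq T$ into the neighbourhood-uniform hessian lower bound \eqref{eq:hess_low_bound_prop_2}.

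To produce this Grammian bound, I would first specialise the weak regular persistence inequality to $\xi_1 = x(t-T)$, giving $l(t-T, t, x(t-T), \xi_2, u) \geq \kappa(\|\xi_2 - x(t-T)\|)$ for $\xi_2 \in \widebar{B}(x(t-T), R)$. Finite sensitivity of $\kappa$ supplies $r_0, \eta > 0$ with $\kappa(\|v\|) \geq \eta \|v\|^2$ whenever $\|v\| \leq r_0$. On the other hand, Lemma~\ref{lem:cost_MHE_state} annihilates the zeroth- and first-order terms in the Taylor expansion of $l(t-T, t, x(t-T), \cdot, u)$ around $x(t-T)$, so Taylor's theorem with Lagrange remainder gives, with $v = \xi_2 - x(t-T)$,
\begin{align*}
l(t-T, t, x(t-T), \xi_2, u) = v^\top \mathcal{C}(t, T, x(t-T), u)\, v + R_t(v),
\end{align*}
where $|R_t(v)| \leq \tfrac{M_t}{6}\|v\|^3$ and $M_t$ is an upper bound on $\|\mathrm{d}^3_{\xi_2} l(t-T, t, x(t-T), \cdot, u)\|$ over $\widebar{B}(x(t-T), R)$.

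The core step is to show that $M_t$ can be chosen independently of $t$. Differentiating the integrand in Definition~\ref{def:cum_error_obs} three times in $\xi_2$ yields a polynomial expression in derivatives of $h$ of orders up to three and in the flow derivatives $\Phi_f$, $\mathrm{d}_\xi \Phi_f$, $\mathrm{d}^2_\xi \Phi_f$. Hypothesis~\ref{as:three_times_diff} makes $h$ of class $C^3$; combined with regular boundedness and Hypothesis~\ref{as:U_compact}, the arguments of $h$ remain in a fixed compact set for every $s \in [t-T, t]$ and every $t \geq T$, so these derivatives of $h$ are uniformly bounded. Lemma~\ref{lem:regular_bound_diff} supplies the matching uniform bounds on the flow derivatives, and since the integration window has fixed length $T$, a single constant $M$ then bounds every $M_t$.

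With $M$ uniform in $t$, combining the two inequalities and choosing $\delta \leq \min\{R, r_0, 3\eta/M\}$ gives, for every unit vector $e \in \mathbb{R}^{n_x}$ and $v = \delta e$,
\begin{align*}
\delta^2\, e^\top \mathcal{C}(t, T, x(t-T), u)\, e \;\geq\; \eta \delta^2 - \tfrac{M}{6}\delta^3 \;\geq\; \tfrac{\eta}{2}\delta^2,
\end{align*}
whence $\mathcal{C}(t, T, x(t-T), u) \succeq \tfrac{\eta}{2} I_{n_x}$ for every $t \geq T$. Lemma~\ref{lem:obs_gram_hess_regular} then produces the required $R > 0$ and $\mu > 0$ verifying \eqref{eq:hess_low_bound_prop_2}. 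I expect the uniform-in-$t$ control of the cubic remainder to be the main obstacle: all three ingredients, namely $C^3$ smoothness of $f$ and $h$, compactness of $U$, and regular boundedness channelled through Lemma~\ref{lem:regular_bound_diff}, are indispensable in pinning down $M_t$ by a common $M$, and dropping any one of them would let $M_t$ grow with $t$ and destroy the absorption of the cubic term into the quadratic lower bound on $l$.
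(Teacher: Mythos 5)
Your proof is correct and follows essentially the same route as the paper's: specialise the weak regular persistence bound to $\xi_1=x(t-T)$, combine it with finite sensitivity and a Taylor expansion (the paper uses the Peano form plus a limiting argument, you use the Lagrange form with a uniformly bounded cubic remainder) to obtain a $t$-uniform bound $\mathcal{C}(t,T,x(t-T),u)\succeq \mu' I_{n_x}$, and then invoke Lemma~\ref{lem:obs_gram_hess_regular}. The only quibble is that your closing remark slightly misplaces where the uniformity hypotheses are indispensable: the absorption of the cubic term works for each $t$ separately and would still deliver the $t$-independent constant $\eta/2$ even if $M_t$ grew with $t$, so the real load-bearing use of regular boundedness and Hypotheses~\ref{as:three_times_diff} and~\ref{as:U_compact} is inside Lemma~\ref{lem:obs_gram_hess_regular}, in passing from the Grammian bound at $x(t-T)$ to a Hessian bound on a neighbourhood of $t$-independent radius.
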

\begin{proof}
See Appendix \ref{app:converse_hessian_lower_bound_weak_reg_per}.
\end{proof}

In the spirit of Corollary \ref{cor:grammian_pos_definite_weak_per}, Corollary \ref{cor:grammian_bound_definite_weak_reg_per} gives a  sufficient and a necessary condition for weak regular persistence in terms of lower boundedness  of the Observability Grammian uniformly with time.

\begin{corollary}\label{cor:grammian_bound_definite_weak_reg_per}
Let $x_0\in \mathbb{R}^{n_x}$ be an initial condition and $u$ be an input trajectory. Under Hypotheses \ref{as:three_times_diff} and \ref{as:U_compact}, if there exist $T>0$ and $\mu>0$ such that that System \eqref{eq:general_dyn_continous_time} is regularly bounded at $x_0$ with horizon $T$ and such that for any $t\geq T$:
\begin{align}
     \mathcal{C}(t,T,x(t-T),u)\succeq \mu I_{n_x},\label{eq:gram_bound_prop}
\end{align}
then $u$ is a weakly regularly persistent input trajectory at $x_0$.

Conversely, under Hypotheses \ref{as:three_times_diff} and \ref{as:U_compact},  suppose that $u$ is a weakly regularly persistent input trajectory at $x_0$ with an associated $\mathcal{K}$-function $\kappa$ that has finite sensitivity and an associated time horizon $T$ such that System \eqref{eq:general_dyn_continous_time} is regularly bounded at $x_0$ with horizon $T$, then  there exist $T>0$ and $\mu>0$ such that  for any $t\geq T$, \eqref{eq:gram_bound_prop} holds.
\end{corollary}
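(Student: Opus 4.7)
The plan is to chain together the two equivalences already established, in the same spirit as the proof of Corollary \ref{cor:grammian_pos_definite_weak_per}. Specifically, Lemma \ref{lem:obs_gram_hess_regular} provides the equivalence between a time-uniform lower bound on the Observability Grammian $\mathcal{C}(t,T,x(t-T),u)$ and a time-uniform lower bound on the Hessian $\mathrm{d}^2_{\xi_2} l(t-T,t,\xi_1,\xi_2,u)$ on a ball around $x(t-T)$ of radius independent of $t$, under the regular boundedness hypothesis and Hypotheses \ref{as:three_times_diff} and \ref{as:U_compact}. Propositions \ref{prop:hessian_lower_bound_definite_weak_reg_per} and \ref{prop:converse_hessian_lower_bound_weak_reg_per} then bridge between a time-uniform Hessian lower bound and weak regular persistence (with the finite sensitivity hypothesis needed only for the converse direction).

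For the forward direction, I would start from the assumption that there exist $T>0$ and $\mu>0$ such that the system is regularly bounded at $x_0$ with horizon $T$ and $\mathcal{C}(t,T,x(t-T),u) \succeq \mu I_{n_x}$ for all $t \geq T$. Invoking Lemma \ref{lem:obs_gram_hess_regular}, implication \ref{item:lower_bound_grammian_obs_lem} $\Rightarrow$ \ref{item:lower_bound_hess_lem}, yields the existence of $R>0$ and $\mu'>0$ (possibly different from $\mu$) such that $\mathrm{d}^2_{\xi_2} l(t-T,t,\xi_1,\xi_2,u) \succeq \mu' I_{n_x}$ on $(\widebar{B}(x(t-T),R))^2$ uniformly in $t \geq T$. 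Then Proposition \ref{prop:hessian_lower_bound_definite_weak_reg_per} directly gives that $u$ is weakly regularly persistent at $x_0$.

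For the converse, I would start from $u$ being weakly regularly persistent at $x_0$ with an associated $\mathcal{K}$-function of finite sensitivity and an associated horizon $T$ for which the system is regularly bounded at $x_0$. Applying Proposition \ref{prop:converse_hessian_lower_bound_weak_reg_per} under Hypotheses \ref{as:three_times_diff} and \ref{as:U_compact}, I obtain $T>0$, $R>0$, $\mu>0$ such that $\mathrm{d}^2_{\xi_2} l(t-T,t,\xi_1,\xi_2,u) \succeq \mu I_{n_x}$ on $(\widebar{B}(x(t-T),R))^2$ uniformly in $t\geq T$. Then the implication \ref{item:lower_bound_hess_lem} $\Rightarrow$ \ref{item:lower_bound_grammian_obs_lem} in Lemma \ref{lem:obs_gram_hess_regular}, which is immediate by setting $(\xi_1,\xi_2)=(x(t-T),x(t-T))$ and using $\mathcal{C}(t,T,x(t-T),u)=\tfrac{1}{2}\mathrm{d}^2_{\xi_2} l(t-T,t,x(t-T),x(t-T),u)$, delivers the desired Grammian lower bound with constant $\mu/2$.

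Since all the heavy lifting has already been done in the preceding lemmas and propositions, I do not anticipate any real obstacle here; the only care to take is to track how the constants $T$, $R$ and $\mu$ transfer between the Hessian lower bound and the Grammian lower bound (notably the factor of $1/2$ coming from the definition of $\mathcal{C}$ as \emph{half} the Hessian), and to verify that the regular boundedness hypothesis is used in the same way in both directions so that Lemma \ref{lem:obs_gram_hess_regular} applies as stated.
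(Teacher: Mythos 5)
Your proposal is correct and follows exactly the paper's own route: the paper proves this corollary in one line by citing Proposition \ref{prop:hessian_lower_bound_definite_weak_reg_per}, Lemma \ref{lem:obs_gram_hess_regular}, and Proposition \ref{prop:converse_hessian_lower_bound_weak_reg_per}, which is precisely the chaining you describe. Your additional care about how the constants $T$, $R$, $\mu$ (and the factor $1/2$ in the definition of $\mathcal{C}$) transfer is accurate and consistent with the statements of those results.
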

\begin{proof}

% Suppose that Assumptions \ref{as:three_times_diff} and \ref{as:U_compact} hold and that there exist $T>0$ and $\mu>0$ such that that System \eqref{eq:general_dyn_continous_time} is regularly bounded at $x_0$ with horizon $T$ and such that for any $t\geq T$ such that \eqref{eq:gram_bound_prop} holds, then by repeating the argument of the proof of Proposition \ref{prop:converse_hessian_lower_bound_weak_reg_per} one gets that there exist $T>0$, $R>0$, $\mu>0$ such that for any $t\geq T$ and any $(\xi_1,\xi_2)\in (\widebar{B}(x(t-T),R))^2$:
% \begin{align}
%     \mathrm{d}^2_{\xi_2}l(t-T,t,\xi_1,\xi_2,u)\succeq \mu I_{n_x}.
% \end{align}

% The converse follows by applying Proposition \ref{prop:converse_hessian_lower_bound_weak_reg_per} and choosing $\xi_1=x(t-T)$ and $\xi_2=x(t-T)$ in \eqref{eq:hess_low_bound_prop_2}.

The result follows from Proposition \ref{prop:hessian_lower_bound_definite_weak_reg_per}, Lemma \ref{lem:obs_gram_hess_regular},  and Proposition \ref{prop:converse_hessian_lower_bound_weak_reg_per}.
\end{proof}

\begin{remark}\label{rem:MHE_estimation_error}
Although several notions of weakly persistent observability  have already been defined for MHE notably in \cite{alessandri_moving-horizon_2008,morari_efficient_2009,kang_moving_2006,wynn_convergence_2014},  weak persistence and weak regular persistence of the input do not seem to have been stated in this form and put into perspective with other nonlinear observability and optimization concepts. Furthermore, in the above cited work, it is typically assumed that a solution of a perturbed MHE problem is available. Then, it is  shown that, under an observability assumption very close to the one introduced in this paper, the estimation error of an approximate MHE scheme is ultimately bounded by the noise provided that it is small. However,  to the best of our knowledge, the stability of local solutions of MHE problems in the presence of output noise has not been treated so far in the literature.
\end{remark}

Following Remark \ref{rem:MHE_estimation_error}, it seems critical to wonder whether  small disturbances in the measurements imply a small drift between the true state of the system and the solution of a perturbed MHE problem under weak persistent or weak regular persistent observability. This the topic of Section \ref{sec:stability}.

\subsection{Stability of solutions of MHE problems under additive perturbation}\label{sec:stability}

%\subsubsection{State estimation}
In this section, we prove that weak and weak regular persistence of an input trajectory imply the existence and the local uniqueness of a local solution of the associated MHE problem in the presence of small additive process and output noise. A bound on the magnitude of the difference between the true state of the system and the perturbed local solution is also derived. In the case of a weakly regularly persistent input, this bound is independent of the time $t$.

In the following, for any $n\in \mathbb{N}$,  any $0\leq t_1<t_2\leq +\infty$, and any measurable $b:[t_1,t_2]\rightarrow \mathbb{R}^n$, we denote by $\Vert b \Vert_{\infty,[t_1,t_2]}$ the $L_\infty$ norm of $b$ on $[t_1,t_2]$. If $t_1=0$ and $t_2=+\infty$, we denote by $\Vert b \Vert_{\infty}$ the $L_\infty$ norm of $b$ on $\mathbb{R}^+$. We denote by $L_\infty([t_1,t_2],\mathbb{R}^n)$ the Banach space of measurable functions $b:[t_1,t_2]\rightarrow \mathbb{R}^n$ such that $\Vert b \Vert_{\infty,[t_1,t_2]}<+\infty$.  For any $T>0$  and $t\geq T$, we also set $\Theta_{t,T}= L_\infty([t-T,t],\mathbb{R}^{n_y})\times L_\infty([0,t],\mathbb{R}^{n_x})$ and $\Theta=L_\infty([0,+\infty[,\mathbb{R}^{n_y})\times L_\infty([0,+\infty[,\mathbb{R}^{n_x})$. For any $\eta=(v,w)\in \Theta_{t,T}$ (resp. $\Theta$), we set $\Vert \eta \Vert_{t,T}=\max(\Vert v \Vert_{\infty,[t-T,t]},\Vert w \Vert_{\infty,[0,t]})$ (resp. $\Vert \eta \Vert=\max(\Vert v \Vert_{\infty},\Vert w \Vert_{\infty})$). 
For any $T>0$ and any $t\geq T$, we respectively denote by $B_{t,\infty}$ and $\widebar{B}_{t,\infty}$ the open and closed ball in $\Theta_{t,T}$. We also use  ${B}_{\infty}$ and $\widebar{B}_{\infty}$  to denote the open and closed ball in  $\Theta$ respectively. We can now state the stability result of the section in both the cases of a weakly persistent and a weakly regularly persistent input trajectory.

First, for any $0\leq s_1\leq s \leq s_2 < +\infty $, any $\xi\in \mathbb{R}^{n_x}$, any control input trajectory $u$ and any process noise signal $w\in  L_\infty([s_1,s_2],\mathbb{R}^{n_x})$, we define the following perturbed Cauchy problem:
     \begin{align}
       \dot{x}(s)&=f(x(s),u(s))+w(s), \label{eq:general_perturbed_dyn_continous_time}\\
        x(s_1)&=\xi.\notag
   \end{align}
We assume for any  $0\leq s_1 \leq s\leq s_2< +\infty$, any $\xi\in \mathbb{R}^{n_x}$, any control input trajectory $u$ and any $w\in  L_\infty([s_1,s_2],\mathbb{R}^{n_x})$ that the solution of \eqref{eq:general_perturbed_dyn_continous_time} at time $s$, is uniquely defined and we denote it by $\tilde{\phi}_f(s;s_1,\xi,u,w)$. Since $w$ is only measurable, \eqref{eq:general_perturbed_dyn_continous_time} is only satisfied almost everywhere, see Theorem 2.1.1 and 2.1.3 in \cite{bressan_introduction_2007}.  Moreover, let  $x_0\in\mathbb{R}^{n_x}$ be an initial condition, $u$ be an input trajectory and  $w\in  L_\infty([0,t[,\mathbb{R}^{n_x})$ be a process noise signal. The perturbed reference trajectory  is defined  for any $t\geq0 $ by:
\begin{align}
    \tilde{x}(t,w)=\tilde{\phi}_f(t;0,x_0,u,w) \label{eq:reference_traj_perturbed}.
\end{align}
Note that \eqref{eq:reference_traj_perturbed} is a fortiori also defined for any $w\in  L_\infty([0,+\infty[,\mathbb{R}^{n_x})$. For conciseness, the dependence of $\tilde{x}$ on $u$ and $x_0$ is removed.  Clearly, $\tilde{\phi}_f$ coincides with ${\phi}_f$ in the unperturbed case leading to:
\begin{align*}
    \tilde{\phi}_f(s;s_1,\xi,u,0)&={\phi}_f(s;s_1,\xi,u),\\
    \tilde{x}(t,0)&={x}(t),
\end{align*}
where ${\phi}_f(s;s_1,\xi,u)$ and $x(t)$ are defined in \eqref{eq:general_dyn_continous_time} and \eqref{eq:reference_traj}.

Thus, for any $T>0$, any $t\geq T$, any $\xi\in \mathbb{R}^{n_x}$ and any perturbation signals $\eta=(v,w)\in \Theta_{t,T}$,  we define the perturbed version of \eqref{pb:receding_horizon_gen_state} as follows:
\begin{equation}
\tag{$\text{PMHE}_{t,T,u,v,w}$}
\begin{array}{rrclcc}
\displaystyle\inf_{\xi\in \mathbb{R}^{n_x}}\tilde{l}(t-T,t,\xi,u,\eta), 
\end{array}
\label{pb:receding_perturbed_horizon_gen_state}
\end{equation}
where for $\xi \in \mathbb{R}^{n_x}$:
\begin{align}
    \tilde{l}(t-T,t,\xi,u,\eta)=
        \int_{t-T}^{t}\Vert h(\tilde{x}(s,w),u(s))+v(s)- h(\phi_f(s;t-T,\xi,u),u(s))\Vert^2\mathrm{d}s.\label{eq:perturbed_error}
\end{align}

   %   \begin{align}
   %     \dot{x}&=f(x,u)+w, \label{eq:general_perturbed_dyn_continous_time}\\
   %      x(0)&=x_0.\notag
   % \end{align}

\begin{remark}
    Note that in \eqref{pb:receding_perturbed_horizon_gen_state}, $w$ and $v$ do not play the same role. Indeed, $v$ represents a \emph{measurement} noise that is pointwise additive in time, thus, only its values on the interval $[t-T,t]$ matter in the computation of $\tilde{l}$. On the contrary, $w$ is a \emph{process} noise that is integrated through \eqref{eq:general_perturbed_dyn_continous_time} Hence, the perturbed reference trajectory $\tilde{x}(\cdot,w)$ on $[t-T,t]$, depends on the values of $w$ on the whole interval $[0,t]$ and not only on those on $[t-T,t]$.   
\end{remark}

Consequently, in the following, we study the properties of $\tilde{x}$  and its differential when $w\in  L_\infty([0,t],\mathbb{R}^{n_x})$.    Lemma \ref{lem:bounded_x_bar} states the boundedness of $\tilde{x}(\cdot,w)$ on $[t-T,t]$  for any $t$  while Lemma \ref{lem:diff_process_noise} gives differentiability properties of $\tilde{x}(s,w)$ with respect to $w$ as well as the boundedness of the differential.

\begin{lemma}\label{lem:bounded_x_bar} 

Under Hypothesis \ref{as:U_compact},  for any $x_0 \in \mathbb{R}^{n_x}$, any $T>0$, any $t\geq T $, any input trajectory $u$, any process noise signal $w \in \textrm{L}_{\infty}([0,t],\mathbb{R}^{n_x}) $ and any $\nu_t >0$:
\begin{align}
      \sup_{s\in [t-T,t]} \sup_{\Vert w\Vert_{\infty,[0,t]}\leq \nu_t } \Vert \tilde{x}(s,w) \Vert <+\infty.
\end{align}

\end{lemma}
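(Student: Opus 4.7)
The plan is to bound $\tilde x(\cdot,w)$ uniformly in $w$ by comparing the perturbed trajectory with the unperturbed reference $x(\cdot)$ through a Gronwall estimate, then propagating that estimate over the whole interval $[0,t]$ via a continuation argument.

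First, I would observe that $x(\cdot) = \phi_f(\cdot;0,x_0,u)$ is continuous on the compact interval $[0,t]$, so $C_0 = \sup_{s\in[0,t]}\Vert x(s)\Vert < +\infty$. Setting $e(s,w) = \tilde x(s,w) - x(s)$, subtracting the two Cauchy problems defining $\tilde x(\cdot,w)$ and $x(\cdot)$ gives $e(0,w)=0$ and, for almost every $s \in [0,t]$,
\begin{align*}
    \dot e(s,w) = f(\tilde x(s,w),u(s)) - f(x(s),u(s)) + w(s).
\end{align*}

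Next, for any $R>0$, Hypothesis \ref{as:U_compact} makes $\widebar B(0,C_0+R)\times U$ compact, so the $C^1$ map $f$ is Lipschitz in its first argument on this set, uniformly in the second, with some constant $L_R>0$. Introducing the exit time $\tau_R(w) = \inf\{s\in[0,t]:\Vert e(s,w)\Vert \geq R\}$ (with $\tau_R(w)=t$ when the set is empty), one has $\tilde x(s,w), x(s) \in \widebar B(0, C_0 + R)$ for every $s\in[0,\tau_R(w)]$, hence
\begin{align*}
    \Vert \dot e(s,w)\Vert \leq L_R \Vert e(s,w)\Vert + \nu_t,
\end{align*}
and Gronwall's inequality yields $\Vert e(s,w)\Vert \leq \frac{\nu_t}{L_R}\bigl(e^{L_R t}-1\bigr)$ on $[0,\tau_R(w)]$, uniformly in $w$ with $\Vert w\Vert_{\infty,[0,t]}\leq \nu_t$.

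Finally, the standing assumption that $\tilde\phi_f(\cdot;0,x_0,u,w)$ is globally defined for every admissible $w$ prevents blow-up and permits a continuation argument: one partitions $[0,t]$ into finitely many sufficiently short subintervals and chains the above Gronwall-type estimate across them, using the bound at the right endpoint of each subinterval as the initial datum for the next. Combining $\Vert \tilde x(s,w)\Vert \leq C_0 + \Vert e(s,w)\Vert$ with the resulting uniform bound and taking suprema over $s\in[t-T,t]$ and $\Vert w\Vert_{\infty,[0,t]}\leq \nu_t$ concludes the proof.

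The main obstacle is precisely this continuation step: $L_R$ may grow rapidly with $R$, so the Gronwall estimate need not close with a single choice of $R$. The global-existence hypothesis is what allows one to terminate the continuation in finitely many steps; otherwise the partition would necessarily shrink to zero, which would produce a $w$ with $\Vert w\Vert_\infty \leq \nu_t$ for which $\tilde x(\cdot,w)$ blows up in finite time, contradicting the assumption that $\tilde\phi_f$ is defined on all of $[0,t]$.
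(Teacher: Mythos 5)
The paper does not actually argue this lemma: its ``proof'' is a citation to Theorem 3.2.3 of \cite{bressan_introduction_2007}. Your attempt to supply a self-contained argument is in the right spirit --- bounding the reference trajectory on the compact interval, writing the error equation for $e(s,w)=\tilde{x}(s,w)-x(s)$, using Hypothesis \ref{as:U_compact} to get a Lipschitz constant $L_R$ for $f$ on $\widebar{B}(0,C_0+R)\times U$, and running Gronwall up to the exit time $\tau_R(w)$ --- and everything up to and including the estimate $\Vert e(s,w)\Vert \leq \frac{\nu_t}{L_R}\bigl(e^{L_R t}-1\bigr)$ on $[0,\tau_R(w)]$ is correct.

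The gap is the final continuation step, and it is exactly the step you flag yourself. If the chained estimates fail to cover $[0,t]$, what you actually obtain is a time $s^*\leq t$ and a \emph{sequence} of controls $w_n$ with $\Vert w_n\Vert_{\infty,[0,t]}\leq \nu_t$ such that $\sup_{s\leq s_n}\Vert \tilde{x}(s,w_n)\Vert\to+\infty$ for some $s_n\uparrow s^*$, while each individual trajectory remains finite. To contradict the standing global-existence assumption you must produce a \emph{single} $w$ whose trajectory blows up, i.e.\ pass to a limit along $(w_n)$. The closed ball of $L_\infty$ is not norm-compact, so this requires a genuine compactness argument: weak-$*$ sequential compactness of the ball (as the dual ball of $L_1$), the fact that the perturbation enters the dynamics additively so that the trajectory map is sequentially continuous for that topology, and an Arzel\`a--Ascoli argument on the trajectories restricted to $[0,\sigma]$ for $\sigma<s^*$ where they are still uniformly bounded --- in short, a Filippov-type compactness-of-trajectories argument. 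None of this appears in your proposal, so the dichotomy ``either the continuation terminates in finitely many steps or some single $w$ blows up'' is asserted rather than proved, and it is precisely the nontrivial content of the lemma. Note also that when $f$ has at most linear growth in $x$ uniformly over the compact $U$ (or, more generally, under an Osgood-type condition on $m(r)=\sup_{\Vert x\Vert\leq r,\,u\in U}\Vert f(x,u)\Vert$), a single Gronwall estimate closes directly and no continuation is needed at all; the delicate case is superlinear growth, which is exactly where your argument stops short.
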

\begin{proof}
    See Theorem 3.2.3 in \cite{bressan_introduction_2007}.
\end{proof}

\begin{lemma}\label{lem:diff_process_noise}

Under Hypothesis \ref{as:U_compact},  for any $T>0$, $t\geq T$, $x_0 \in \mathbb{R}^{n_x}$, and $s \in [t-T,t]$, $\tilde{x}(s,\cdot)$ is continuously differentiable in $\textrm{L}_{\infty}([0,t],\mathbb{R}^{n_x})$. Furthermore, its differential is denoted by $\diff_{w}\tilde{x}(s,w)$ and  $\diff_{w}\tilde{x}(s,w)\Delta w=z(s)$  for any $(w,\Delta w) \in (\textrm{L}_{\infty}([0,t],\mathbb{R}^{n_x}))^2$, where $z$ is the unique solution of the following Cauchy problem for almost all $ s \in [0, t] $:
  \begin{align}
      \dot{z}(s)&= A(s)z(s) + \Delta w(s),\label{eq:diff_x_perturbed_w}\\
      z(0)&=0,\notag
  \end{align}
  with $A(s)=d_x f(\tilde{x}(s,w),u(s))$.
  % for any $(w,\Delta w) \in (\textrm{L}_{\infty}([t-T,t],\mathbb{R}^{n_x}))^2$:

  % \begin{align}
  %     \diff_{w}\tilde{x}(s,w)\Delta w=\int_{t-T}^s N(s',s,w)\Delta w(s')\diff s' 
  % \end{align}
  % where for any $t-T\leq s'\leq s \leq t$ and any $w, \in \textrm{L}_{\infty}([t-T,t],\mathbb{R}^{n_x})$, $N(s,s',w)$ is solution of the following matrix-valued Cauchy problem:
  % \begin{align}
  %     N(s',s',w)&=I_{n_x},\\
  %     \diff_s N(s,s',w)&=d_x f(\tilde{x}(s,w),u(s))N(s,s',w).  
  % \end{align}
  
  Additionally,  $T>0$ $t\geq T$ and $\nu_t>0$: 
  \begin{align}
     \sup_{s\in [t-T,t]} \sup_{\Vert w\Vert_{\infty,[0,t]}\leq \nu_t } \Vert \diff_{w}\tilde{x}(s,w) \Vert <+\infty. \label{eq:bounded_x_perturbed_t}
  \end{align}

\end{lemma}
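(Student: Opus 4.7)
The plan is to recognise $z$ as the formal linearisation of \eqref{eq:general_perturbed_dyn_continous_time} in the process-noise variable $w$ and to validate it as the Fr\'echet differential of $\tilde{x}(s,\cdot)$ by the standard Gr\"onwall argument adapted to the Banach space $L_\infty([0,t],\mathbb{R}^{n_x})$. First, I would establish well-posedness of the linear Cauchy problem \eqref{eq:diff_x_perturbed_w}. Under Hypothesis \ref{as:U_compact}, $U$ is compact and $u$ is piecewise continuous, so $u([0,t])$ lies in a compact set; Lemma \ref{lem:bounded_x_bar} (applied on the whole interval $[0,t]$, i.e.\ with the choice $T=t$) yields that $\tilde{x}(\cdot,w)$ remains in a compact $K_t$ uniformly over $w$ in any ball $\{\Vert w\Vert_{\infty,[0,t]}\leq \nu_t\}$. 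Since $f$ is twice continuously differentiable, $A(s)=\diff_x f(\tilde{x}(s,w),u(s))$ is measurable and essentially bounded on $[0,t]$, with $\Vert A\Vert_{\infty,[0,t]}\leq M_t$ depending only on $\nu_t$. Classical Carath\'eodory theory for linear ODEs then gives a unique absolutely continuous solution of \eqref{eq:diff_x_perturbed_w}, written in variation-of-constants form as $z(s)=\int_0^s \Psi(s,\tau)\Delta w(\tau)\diff\tau$ with $\Vert \Psi(s,\tau)\Vert \leq e^{M_t(s-\tau)}$. Hence the linear map $\Delta w\mapsto z(s)$ is bounded on $L_\infty([0,t],\mathbb{R}^{n_x})$ with operator norm at most $t\, e^{M_t t}$, giving a candidate for the differential that is uniformly bounded over the ball, i.e.\ \eqref{eq:bounded_x_perturbed_t}.

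Next, to prove that this candidate really is the Fr\'echet derivative, I would set $e(s)=\tilde{x}(s,w+\Delta w)-\tilde{x}(s,w)-z(s)$ and compute $\dot e$ almost everywhere on $[0,t]$. Subtracting the perturbed dynamics from each other and using \eqref{eq:diff_x_perturbed_w} yields
\[
\dot e(s) = f(\tilde{x}(s,w+\Delta w),u(s)) - f(\tilde{x}(s,w),u(s)) - A(s)z(s).
\]
A first-order Taylor expansion of $f$ in $x$ at $\tilde{x}(s,w)$ with integral remainder gives
\[
f(\tilde{x}(s,w+\Delta w),u(s)) - f(\tilde{x}(s,w),u(s)) = A(s)\bigl(\tilde{x}(s,w+\Delta w)-\tilde{x}(s,w)\bigr) + \rho(s),
\]
with $\Vert \rho(s)\Vert \leq K\Vert \tilde{x}(s,w+\Delta w)-\tilde{x}(s,w)\Vert^2$ for a constant $K$ depending only on $\sup_{K_t\times u([0,t])}\Vert \diff^2_x f\Vert$, which is finite by Hypothesis \ref{as:U_compact} and $f\in C^2$. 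A preliminary Gr\"onwall estimate applied to $\tilde{x}(s,w+\Delta w)-\tilde{x}(s,w)$ yields $\sup_{s\in[0,t]}\Vert \tilde{x}(s,w+\Delta w)-\tilde{x}(s,w)\Vert \leq C'_t\,\Vert \Delta w\Vert_{\infty,[0,t]}$, so $\Vert \rho\Vert_{\infty,[0,t]}=O(\Vert \Delta w\Vert_{\infty,[0,t]}^2)$. Therefore $e$ satisfies $\dot e = A e + \rho$ with $e(0)=0$, and a final Gr\"onwall inequality gives $\sup_{s\in[t-T,t]}\Vert e(s)\Vert = O(\Vert \Delta w\Vert_{\infty,[0,t]}^2)$. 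This establishes Fr\'echet differentiability and identifies $\diff_w\tilde{x}(s,w)\Delta w$ with $z(s)$.

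Finally, continuous differentiability amounts to showing that $w\mapsto \diff_w\tilde{x}(s,w)$ is continuous in the operator norm of $\mathcal{L}(L_\infty([0,t],\mathbb{R}^{n_x}),\mathbb{R}^{n_x})$, uniformly in $s\in[t-T,t]$. For $w_1,w_2$ in a fixed ball, continuous dependence of ODE solutions on the forcing term (again Gr\"onwall) gives $\tilde{x}(\cdot,w_1)\to \tilde{x}(\cdot,w_2)$ uniformly on $[0,t]$ as $\Vert w_1-w_2\Vert_{\infty,[0,t]}\to 0$; since $\diff_x f$ is uniformly continuous on the compact $K_t\times u([0,t])$, the corresponding matrices $A_{w_1},A_{w_2}$ converge in $L_\infty([0,t])$; a standard Gr\"onwall estimate on the difference of the two linear flows then yields uniform convergence of the transition matrices $\Psi_{w_i}$ and hence operator-norm convergence of the differentials. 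The main obstacle throughout is that $w$ is only measurable, so \eqref{eq:general_perturbed_dyn_continous_time} and \eqref{eq:diff_x_perturbed_w} only hold almost everywhere and the Taylor remainder has to be controlled in an $L_\infty$ sense uniformly over $s$; however, none of the estimates leave the reach of classical Carath\'eodory theory and Gr\"onwall's inequality once boundedness of $\tilde{x}$ is secured by Lemma \ref{lem:bounded_x_bar}.
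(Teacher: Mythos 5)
Your argument is correct and is essentially the paper's: the paper proves this lemma purely by citation (Theorem 3.2.6 in Bressan and Proposition 5.1.1 in Tr\'elat for the variational equation \eqref{eq:diff_x_perturbed_w}, and Theorem 3.2.3 in Bressan for \eqref{eq:bounded_x_perturbed_t}), and those cited results are established by exactly the Carath\'eodory/Gr\"onwall linearisation argument you write out in full. The only point worth keeping explicit is that the Taylor-remainder constant $K$ requires the segment between $\tilde{x}(s,w)$ and $\tilde{x}(s,w+\Delta w)$ to stay in a fixed compact set, which follows from Lemma \ref{lem:bounded_x_bar} once $\Vert\Delta w\Vert_{\infty,[0,t]}$ is small enough that $w+\Delta w$ remains in the ball of radius $\nu_t$.
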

\begin{proof}
    See Theorem 3.2.6 in \cite{bressan_introduction_2007} and Proposition 5.1.1. in \cite{trelat_controoptimal_2005} for \eqref{eq:diff_x_perturbed_w} and Theorem 3.2.3 in \cite{bressan_introduction_2007} for \eqref{eq:bounded_x_perturbed_t}.
\end{proof} 
Note that the suprema in Lemma \ref{lem:bounded_x_bar} and \ref{lem:diff_process_noise} still depend on $t$. In order to get results that are uniform with respect to $t$, for an initial condition $x_0 \in \mathbb{R}^{n_x}$ and $u$ a control input trajectory, we introduce the following hypothesis:
\begin{hypothesis}\label{as:bounded_diff_x_tilde_w}

    There exists $\nu>0$ such that:
      \begin{align}
       \sup_{t\geq 0}  \sup_{\Vert w\Vert_{\infty}\leq \nu } \Vert \diff_w\tilde{x}(t,w) \Vert <+\infty.
  \end{align}

\end{hypothesis}

We can deduce the following Lemma:

\begin{lemma}\label{lem:boubnde_x_tilde_uniform}

   Let  $x_0 \in \mathbb{R}^{n_x}$ be an initial condition and  $u$ be a control input trajectory. Assume  Hypothesis \ref{as:bounded_diff_x_tilde_w} holds an that:
   \begin{align*}
        \sup_{t\geq 0}  \Vert {x}(t)\Vert <+\infty.
   \end{align*}
Then, there exists $\nu>0$ such that:
      \begin{align}
       \sup_{t\geq 0}  \sup_{\Vert w\Vert_{\infty}\leq \nu } \Vert \tilde{x}(t,w) \Vert <+\infty.
  \end{align}
  
\end{lemma}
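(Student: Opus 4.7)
The plan is to combine the uniform bound on the differential $\diff_w\tilde x$ supplied by Hypothesis \ref{as:bounded_diff_x_tilde_w} with the hypothesis that the unperturbed reference trajectory $x(\cdot)$ is bounded, using a fundamental-theorem-of-calculus argument along the segment from $0$ to $w$ in the noise space $L_\infty([0,t],\mathbb{R}^{n_x})$.

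First, I would invoke Hypothesis \ref{as:bounded_diff_x_tilde_w} to fix a radius $\nu>0$ and a constant
\begin{equation*}
    M := \sup_{t\geq 0}\ \sup_{\Vert w\Vert_{\infty}\leq \nu}\ \Vert \diff_w\tilde{x}(t,w) \Vert <+\infty.
\end{equation*}
By the boundedness assumption on $x(\cdot)$, I would also set $M_0 := \sup_{t\geq 0}\Vert x(t)\Vert <+\infty$ and recall from the unperturbed identity $\tilde{x}(t,0)=x(t)$ (stated just after \eqref{eq:reference_traj_perturbed}) that $\Vert \tilde{x}(t,0)\Vert \leq M_0$ for every $t\geq 0$.

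Next, I would fix $t\geq 0$ and $w\in L_\infty([0,+\infty[,\mathbb{R}^{n_x})$ with $\Vert w\Vert_\infty\leq \nu$, and use the restriction of $w$ to $[0,t]$ (still of $L_\infty$-norm at most $\nu$). Since $\tilde{x}(t,\cdot)$ is of class $C^1$ on $L_\infty([0,t],\mathbb{R}^{n_x})$ by Lemma \ref{lem:diff_process_noise}, I would apply the fundamental theorem of calculus along the segment $\tau\mapsto \tau w$, $\tau\in[0,1]$, which lies entirely in the closed ball of radius $\nu$ since $\Vert \tau w\Vert_\infty = \tau\Vert w\Vert_\infty\leq \nu$. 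This yields
\begin{equation*}
    \tilde{x}(t,w) - \tilde{x}(t,0) = \int_0^1 \diff_w\tilde{x}(t,\tau w)\cdot w\ \diff\tau.
\end{equation*}
Taking norms and bounding the integrand by $M\Vert w\Vert_\infty \leq M\nu$ gives $\Vert \tilde{x}(t,w)\Vert \leq M_0 + M\nu$, uniformly in $t\geq 0$ and $w$ with $\Vert w\Vert_\infty\leq \nu$, which is the desired conclusion.

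The only subtlety is that Lemma \ref{lem:diff_process_noise} states differentiability of $\tilde{x}(t,\cdot)$ on $L_\infty([0,t],\mathbb{R}^{n_x})$ rather than on $L_\infty([0,+\infty[,\mathbb{R}^{n_x})$; I would handle this by noting that $\tilde{x}(t,w)$ only depends on the restriction $w_{|[0,t]}$ (the Cauchy problem \eqref{eq:general_perturbed_dyn_continous_time} integrates $w$ up to time $t$ only), so the $C^1$ regularity on $L_\infty([0,t])$ transfers to $C^1$ regularity of the composition with the restriction map from $L_\infty([0,+\infty[)$, with the same differential and the same operator norm. This identification makes the integrand in the FTC formula well-defined and preserves the bound $M$ from Hypothesis \ref{as:bounded_diff_x_tilde_w}. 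I expect this bookkeeping between the two function spaces to be the only mildly delicate step; everything else is a direct application of the mean-value inequality in Banach spaces.
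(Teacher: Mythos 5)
Your proposal is correct and follows essentially the same route as the paper's proof: a triangle inequality $\Vert \tilde{x}(t,w)\Vert \leq \Vert x(t)\Vert + \Vert \tilde{x}(t,w)-\tilde{x}(t,0)\Vert$ combined with the mean value inequality (your fundamental-theorem-of-calculus integral along the segment $\tau \mapsto \tau w$ is just the integral form of that inequality), using the uniform bound on $\diff_w\tilde{x}$ from Hypothesis~\ref{as:bounded_diff_x_tilde_w}. The extra bookkeeping you note about restricting $w$ from $L_\infty([0,+\infty[)$ to $L_\infty([0,t])$ is a reasonable clarification that the paper leaves implicit.
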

\begin{proof}
    The result follows from the fact that, for any $t\geq 0$  any $\nu>0$, and any $w\in \widebar{B}_\infty(0,\nu)$, $ \Vert \tilde{x}(t,w) \Vert \leq  \Vert {x}(t) \Vert+ \Vert \tilde{x}(t,w) -{x}(t)\Vert$ and that, from the Mean Value Theorem:
    \begin{align*}
        \Vert \tilde{x}(t,w) -{x}(t)\Vert \leq  \nu \sup_{\Vert w\Vert_{\infty}\leq \nu } \Vert \diff_w\tilde{x}(t,w) \Vert. 
    \end{align*}
\end{proof}

Lemma \ref{lem:xi_v_diff_l_perturbed} in Appendix \ref{app:xi_v_diff_l_perturbed} gathers the important differentiability properties of $\tilde{l}$ with respect to $\xi$ and $\eta$ as well as the explicit expressions. We now state the first  result of the section knowing the  existence, local uniqueness and stability the solution of \eqref{pb:receding_perturbed_horizon_gen_state} in the case of weakly persistent input trajectory.

\begin{theorem}\label{th_weak_per_robustness}
Let $x_0 \in \mathbb{R}^{n_x}$ be an initial condition and $u$ be an input trajectory. Assume that $u$ is a weakly persistent input trajectory at $x_0$ and all the associated $\mathcal{K}$-functions $\kappa_t$ have finite sensitivity. Then, there exists $T>0$ such that for any $t\geq T$, there exist $\nu_t>0$, $R_t>0$, $K_t>0$ such that for any $\eta\in {B}_{t,\infty}(0,\nu_t)$, \eqref{pb:receding_perturbed_horizon_gen_state} has a unique local solution on $\widebar{B}(x(t-T),R_t)$ denoted by $\xi^*_t(\eta)$ and it satisfies:

\begin{align}
\Vert \xi^*_t(\eta)-x(t-T) \Vert\leq K_t   \Vert \eta \Vert_{t,T}.  \label{eq:error_solution_perturbed_weak_per}
\end{align}

\end{theorem}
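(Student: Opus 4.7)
The plan is to apply the classical Implicit Function Theorem (IFT) in Banach spaces to the first-order optimality condition of \eqref{pb:receding_perturbed_horizon_gen_state}. For each fixed $t \geq T$, introduce
\[ F_t : \mathbb{R}^{n_x} \times \Theta_{t,T} \longrightarrow (\mathbb{R}^{n_x})^*, \qquad F_t(\xi, \eta) := \mathrm{d}_\xi \tilde l(t-T, t, \xi, u, \eta). \]
When $\eta = 0$, the perturbed reference trajectory satisfies $\tilde x(\cdot, 0) = x(\cdot)$, so $\tilde l(t-T, t, \xi, u, 0) = l(t-T, t, x(t-T), \xi, u)$, and Lemma \ref{lem:cost_MHE_state} gives $F_t(x(t-T), 0) = 0$. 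By Lemma \ref{lem:xi_v_diff_l_perturbed}, $F_t$ is continuously differentiable jointly in $(\xi, \eta)$, and its partial derivative with respect to $\xi$ at $(x(t-T), 0)$ equals
\[ \mathrm{d}_\xi F_t(x(t-T), 0) = \mathrm{d}^2_{\xi_2} l(t-T, t, x(t-T), x(t-T), u) = 2\, \mathcal{C}(t, T, x(t-T), u). \]
Because $u$ is weakly persistent at $x_0$ and all the $\kappa_t$ have finite sensitivity, Corollary \ref{cor:grammian_pos_definite_weak_per} yields $\mathcal{C}(t, T, x(t-T), u) \succ 0$ for every $t \geq T$; in particular the operator $\mathrm{d}_\xi F_t(x(t-T), 0)$ is invertible.

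Applying the Banach-space IFT therefore provides, for each $t \geq T$, radii $\nu_t, R_t > 0$ and a $C^1$ map $\xi_t^* : B_{t,\infty}(0, \nu_t) \to \widebar B(x(t-T), R_t)$ with $\xi_t^*(0) = x(t-T)$, such that $\xi_t^*(\eta)$ is the \emph{unique} zero of $F_t(\cdot, \eta)$ inside $\widebar B(x(t-T), R_t)$. Shrinking $\nu_t, R_t$ if needed and invoking joint continuity of $\mathrm{d}^2_\xi \tilde l$, one also obtains $\mathrm{d}^2_\xi \tilde l(t-T, t, \xi, u, \eta) \succ 0$ throughout $\widebar B(x(t-T), R_t) \times \widebar B_{t,\infty}(0, \nu_t)$, which upgrades the unique critical point $\xi_t^*(\eta)$ to a strict local minimiser of \eqref{pb:receding_perturbed_horizon_gen_state}, hence to the unique local solution in $\widebar B(x(t-T), R_t)$.

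The Mean Value Theorem, applied to the $C^1$ map $\xi_t^*$ on the convex open ball $B_{t,\infty}(0, \nu_t)$ (after one more mild shrinking of $\nu_t$ to make the operator-norm supremum of $\mathrm{d}_\eta \xi_t^*$ finite), then yields
\[ \| \xi_t^*(\eta) - x(t-T) \| = \| \xi_t^*(\eta) - \xi_t^*(0) \| \leq K_t \| \eta \|_{t,T}, \qquad K_t := \sup_{\eta' \in \widebar B_{t,\infty}(0, \nu_t/2)} \| \mathrm{d}_\eta \xi_t^*(\eta') \|, \]
which is \eqref{eq:error_solution_perturbed_weak_per}. The principal technical obstacle is hidden in Lemma \ref{lem:xi_v_diff_l_perturbed}, namely establishing joint $C^1$-smoothness of $\tilde l$ in $(\xi, \eta)$ — in particular with respect to $\eta \in \Theta_{t,T}$, an infinite-dimensional Banach space, which itself relies on the process-noise differentiability of $\tilde x$ from Lemma \ref{lem:diff_process_noise}. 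Once that ingredient is in place, the proof is a one-shot application of the Banach-space IFT; no uniformity in $t$ is needed, which is precisely what will distinguish this argument from the weakly regularly persistent counterpart requiring the time-uniform IFT advertised in the Introduction.
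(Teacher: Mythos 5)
Your overall strategy is the same as the paper's: apply the classical Banach-space Implicit Function Theorem to the first-order condition $\diff_{\xi}\tilde{l}(t-T,t,\cdot,u,\cdot)=0$ at the unperturbed solution $(x(t-T),0)$, use Corollary \ref{cor:grammian_pos_definite_weak_per} to invert $2\,\mathcal{C}(t,T,x(t-T),u)$, use joint continuity of $\diff^2_{\xi}\tilde{l}$ to propagate positive definiteness of the Hessian to a product neighbourhood (hence strict convexity on $\widebar{B}(x(t-T),R_t)$ and uniqueness of the local minimiser there), and finish with the Mean Value Theorem. Up to that last step the argument is correct and matches Appendix \ref{app:weak_per_robustness}.

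The one step that does not hold as written is your definition of $K_t$ as $\sup_{\eta'}\Vert \diff_{\eta}\xi^*_t(\eta')\Vert$ over a closed ball in $\Theta_{t,T}$, justified by ``one more mild shrinking of $\nu_t$.'' The ball $\widebar{B}_{t,\infty}(0,\nu_t/2)$ lives in the infinite-dimensional Banach space $\Theta_{t,T}=L_\infty([t-T,t],\mathbb{R}^{n_y})\times L_\infty([0,t],\mathbb{R}^{n_x})$, where closed bounded sets are not compact; continuity of $\diff_{\eta}\xi^*_t$ therefore does not imply that this supremum is finite, and no shrinking of the radius fixes that. The finiteness of $K_t$ has to be established explicitly, which is what the paper does: from the IFT formula $\diff_{\eta}\xi^*_t(\eta)=(\diff^2_{\xi}\tilde{l})^{-1}\diff_{\eta}\diff_{\xi}\tilde{l}$ one bounds the first factor by the uniform lower bound $\diff^2_{\xi}\tilde{l}\succeq \mu_t I_{n_x}$ on the shrunken neighbourhood, and the second factor by the explicit integral expressions \eqref{eq:2_v_xi_diff_l_perturbed}--\eqref{eq:2_w_xi_diff_l_perturbed}, whose suprema $C_{1,t}$, $C_{2,t}$ are finite by continuity of $H$, $\phi_f$, $\Phi_f$ together with Lemmas \ref{lem:bounded_x_bar} and \ref{lem:diff_process_noise}. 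With $K_t$ obtained this way the Mean Value Theorem step goes through and the proof is complete; all the ingredients you need are already in your setup, so this is a repairable omission rather than a wrong approach.
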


\begin{proof}
See Appendix \ref{app:weak_per_robustness}.
\end{proof}

In order to prove the analogue of  Theorem \ref{th_weak_per_robustness} that involves time-independent quantities in the case of a weakly regularly persistent input trajectory,  we first prove a uniform Implicit Function Theorem on Banach spaces with explicit neighbourhoods. The classical Implicit Function Theorem on Banach spaces typically involves a pair $(x_0,y_0)$ valued in two Banach spaces and satisfying an equation of the form $F(x_0,y_0)=0$. The goal is then to prove the existence of $\delta>0$, $\epsilon>0$ and a function $\phi$ such that $y=\phi(x)$ if $\Vert x-x_0 \Vert< \delta $  and $\Vert y - y_0 \Vert <\epsilon$.  The idea of Proposition \ref{prop:uni_implicit_function_th} is to extend the classical Implicit Function Theorem to the case where one has a family of pairs of solutions $(x_{0,t},y_{0,t})_{t\in J}$ valued in Banach spaces and satisfying equations of the form  $F(t,x_{0,t},y_{0,t})=0$ for any $t \in J$. The main hurdle is that, in order to obtain a new interesting result in the  MHE analysis, one is looking for radii $\delta>0$ and $\epsilon>0$ that are uniform in $t$. The proof of Proposition \ref{prop:uni_implicit_function_th} is largely inspired by those of the Theorem in \cite{holtzman_explicit_1970} and Theorem 3.13 in \cite{pathak_introduction_2018}. 

\begin{proposition}[Uniform Implicit Function Theorem on Banach spaces with explicit neighbourhoods]\label{prop:uni_implicit_function_th}
Let $J$ be a set and $X$, $Y$, $Z$ be three Banach spaces. In the following, we do not distinguish the different norms, including those on linear operator spaces, and denote them by $\Vert \cdot \Vert$. Let $\Omega\subset  X\times Y$ be an open set and $F:J\times \Omega \rightarrow Z$ be a map on $J\times \Omega$. Let $\Omega^J$ be the set of mappings from $J$ to $\Omega$ and  let $((x_{0,t},y_{0,t}))_{t\in J}\in \Omega^J$  be a family of elements of $\Omega$ indexed by $J$. Let  $\epsilon>0$, $\delta>$, $L>0$ and $0<\alpha<1$ and for any $t\in J$, set $S_t=B(x_{0,t},\delta)\times \widebar{B}(y_{0,t},\epsilon)$ where $B$ and $\widebar{B}$ respectively denote the open and closed ball. Assume that:

\begin{enumerate}[label=(\roman*)]

\item \label{as:admissible_ball} for any $t \in J$, $S_t\subset \Omega$;
\item \label{as:equation_ref_proof} for any $t \in J$, $F(t,x_{0,t},y_{0,t})=0$;
\item \label{as:cont_diff_proof}for any $t \in J$, $F(t,\cdot,\cdot)$ is continuously differentiable on $\Omega$ so that, in particular, $\diff_yF(t,\cdot,\cdot)$ exists and is continuous on  $\Omega$;
\item \label{as:bounded_inverse_proof}for any $t \in J$, the linear operator $\diff_yF(t,x_{0,t},y_{0,t}):Y\rightarrow Z$ is invertible, and its inverse $\Gamma_t=\left(\diff_yF(t,x_{0,t},y_{0,t})\right)^{-1}$ is such that $\Vert\Gamma_t\Vert \leq L$;  
\item \label{as:lip_non_decreasing_proof} there exists $g_1:[0,\delta]\times[0,\epsilon]\rightarrow\mathbb{R}$ such that for any $r \in [0,\delta]$ and any $s \in [0,\epsilon]$,  $g_1(r,\cdot)$ and $g_1(\cdot,s)$ are non-decreasing and such that, for any $t\in J$ and any $(x,y)\in S_t$:
\begin{align*}
    \Vert \diff_yF(t,x,y)-\diff_yF(t,x_{0,t},y_{0,t})\Vert \leq g_1(\Vert x- x_{0,t}\Vert,\Vert y- y_{0,t}\Vert);
\end{align*}
\item \label{as:lip_non_decreasing_2_proof} there exists a non-decreasing function $g_2:[0,\delta]\rightarrow\mathbb{R}$ such that for any $t\in J$ and any $x\in B(x_{0,t},\delta)$:
\begin{align*}
    \Vert F(t,x,y_{0,t})\Vert\leq  g_2(\Vert x- x_{0,t}\Vert);
\end{align*}
\item \label{as:condition_delta_eps_proof}the positive numbers $\delta$, $\epsilon$, $L$ and $\alpha$ satisfy: \begin{align*}
    &Lg_1(\delta,\epsilon)\leq \alpha<1,
    &Lg_2(\delta)\leq\epsilon(1-\alpha). 
\end{align*}
\end{enumerate}

Then, for any $t\in J$, there exists a unique continuously differentiable maps $\phi_t: B(x_{0,t},\delta)\rightarrow B(y_{0,t},\epsilon) $ such that:

\begin{enumerate}[label=(\alph*)]
    \item $y_{0,t}=\phi_t(x_{0,t})$;\label{eq:phi_ref_proof}
    \item for any $x\in B(x_{0,t},\delta)$, $F(t,x,\phi_t(x))=0$;\label{eq:implicit_proof}
    \item \label{eq:diff_phi}for any $x\in B(x_{0,t},\delta)$, $\diff_yF(t,x_,\phi_t(x))$ is invertible and
    \begin{align*}
     \Vert(\diff_y&F(t,x,\phi_t(x)))^{-1}\Vert \leq \frac{L}{1-Lg_1(\delta,\epsilon)},\\
        \diff \phi_t(x)&=(\diff_yF(t,x_,\phi_t(x)))^{-1}\diff_xF(t,x_,\phi_t(x)),
    \end{align*}
    
    where $(\diff_yF(t,x_,\phi_t(x)))^{-1}:R(t,x)\subset Z\rightarrow X$ and $R(t,x)=image(\diff_yF(t,x,\phi_t(x)))$.
\end{enumerate}

\end{proposition}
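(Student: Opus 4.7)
The plan is to apply the Banach Fixed Point Theorem uniformly in $t$ to a Newton-like operator, with all constants chosen to depend only on the data $\delta,\epsilon,L,\alpha$ and the universal bounds $g_1,g_2$, never on $t$. For each fixed $t \in J$ and each $x \in B(x_{0,t},\delta)$, I define $T_{t,x}: \widebar{B}(y_{0,t},\epsilon) \to Y$ by
$$T_{t,x}(y) = y - \Gamma_t\, F(t,x,y),$$
where $\Gamma_t$ is the bounded inverse from hypothesis (iv). Since $\Gamma_t$ is invertible, $y$ is a fixed point of $T_{t,x}$ iff $F(t,x,y)=0$. Using the fundamental theorem of calculus in Banach spaces together with the identity $I = \Gamma_t \diff_y F(t,x_{0,t},y_{0,t})$, I can write
$$T_{t,x}(y_1) - T_{t,x}(y_2) = \Gamma_t \int_0^1 \bigl[\diff_y F(t,x_{0,t},y_{0,t}) - \diff_y F(t,x, y_2 + s(y_1-y_2))\bigr](y_1-y_2)\,\diff s,$$
whose norm is bounded by $L\,g_1(\delta,\epsilon)\|y_1-y_2\| \leq \alpha \|y_1-y_2\|$ by hypotheses (iv), (v), (vii). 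Hence $T_{t,x}$ is a contraction with constant $\alpha$ independent of $t$ and $x$.

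Next I verify that $T_{t,x}$ sends $\widebar{B}(y_{0,t},\epsilon)$ into itself by splitting
$$T_{t,x}(y) - y_{0,t} = \bigl(T_{t,x}(y) - T_{t,x}(y_{0,t})\bigr) + \bigl(T_{t,x}(y_{0,t}) - y_{0,t}\bigr).$$
The first piece is bounded by $\alpha\epsilon$ by the contraction estimate. The second piece equals $-\Gamma_t F(t,x,y_{0,t})$, whose norm is controlled by $L\,g_2(\|x - x_{0,t}\|) \leq L\,g_2(\delta) \leq \epsilon(1-\alpha)$ thanks to hypotheses (iv), (vi), (vii). The total is at most $\epsilon$. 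The Banach Fixed Point Theorem then produces a unique fixed point $\phi_t(x)$ in $\widebar{B}(y_{0,t},\epsilon)$. Property (a) is immediate since $y_{0,t}$ is a fixed point of $T_{t,x_{0,t}}$ by (ii), and property (b) is the fixed-point relation. The uniformity in $t$ is automatic since every estimate above depended only on the universal constants.

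For property (c), I factor
$$\diff_y F(t,x,\phi_t(x)) = \diff_y F(t,x_{0,t},y_{0,t})\,\bigl(I + A_t(x)\bigr),$$
where $A_t(x) = \Gamma_t\bigl[\diff_y F(t,x,\phi_t(x)) - \diff_y F(t,x_{0,t},y_{0,t})\bigr]$. Hypothesis (v), together with $\phi_t(x) \in \widebar{B}(y_{0,t},\epsilon)$, yields $\|A_t(x)\| \leq L\,g_1(\delta,\epsilon) \leq \alpha < 1$, so $I + A_t(x)$ is invertible by Neumann series with norm at most $1/(1 - L\,g_1(\delta,\epsilon))$; composing with $\Gamma_t$ produces the displayed operator-norm bound. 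The formula for $\diff \phi_t$ and its continuity follow from the usual implicit differentiation argument: subtract the identity $F(t,x+h,\phi_t(x+h)) = F(t,x,\phi_t(x)) = 0$, expand using continuous differentiability of $F$ from (iii) and Lipschitz continuity of $\phi_t$ (which is a by-product of the contraction estimate), and solve for $\phi_t(x+h) - \phi_t(x)$ using the invertibility just established.

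The main obstacle I anticipate is keeping every estimate genuinely uniform in $t$ rather than invoking a local continuity argument at $(x_{0,t},y_{0,t})$ to shrink the admissible radii, which is what the textbook Implicit Function Theorem does implicitly and what would destroy the whole point of the result. The architecture with $g_1$, $g_2$ and the threshold condition (vii) encodes modulus-of-continuity and size-of-residual information in a $t$-independent fashion, and (vii) is precisely the quantitative balance between the two that makes the self-map and contraction estimates close simultaneously with $t$-independent radii $\delta$ and $\epsilon$. This is the ingredient that will later be needed to upgrade the stability analysis of \eqref{pb:receding_perturbed_horizon_gen_state} from a pointwise-in-$t$ statement to one with uniform-in-$t$ neighbourhoods under weak regular persistence.
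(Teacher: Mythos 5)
Your proposal is correct and follows essentially the same route as the paper's proof: the same Newton-like operator $y\mapsto y-\Gamma_t F(t,x,y)$, the same contraction estimate from (iv)--(v)--(vii), the same self-map splitting controlled by $g_2$, and the Banach Fixed Point Theorem applied uniformly in $t$. The only variation is in part (c), where you invert $\diff_yF(t,x,\phi_t(x))$ via the factorisation $\diff_yF(t,x_{0,t},y_{0,t})(I+A_t(x))$ and a Neumann series, whereas the paper uses a reverse-triangle lower bound together with its Lemma on inverses of bounded-below operators; your version is marginally stronger (it yields surjectivity onto $Z$ rather than an inverse defined only on the image) but both deliver the same norm bound $L/(1-Lg_1(\delta,\epsilon))$.
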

\begin{proof}
See Appendix \ref{app:uni_implicit_function_th}.
\end{proof}
 We can now state the main result of the section.

\begin{theorem}\label{th_weak_reg_per_robustness}
Let $x_0 \in \mathbb{R}^{n_x}$ be an initial condition, $u$ be an input trajectory. Assume that  Hypotheses \ref{as:three_times_diff}, \ref{as:U_compact} and \ref{as:bounded_diff_x_tilde_w} hold and that $u$ is a weakly regularly persistent input trajectory at $x_0$ with an associated $\mathcal{K}$-function $\kappa$ that has finite sensitivity and an associated time horizon $T$ such that System \eqref{eq:general_dyn_continous_time} is regularly bounded at $x_0$ with horizon $T$. Then, there exist $\mu>0$, $R'>0$, $\nu'>0$ and variable-wise non-decreasing functions vanishing at $0$, $g_1:\mathbb{R}^+\times\mathbb{R}^+\rightarrow\mathbb{R}^+$, $g_2:\mathbb{R}^+\rightarrow\mathbb{R}^+$  $g_3:\mathbb{R}^+\times\mathbb{R}^+\rightarrow\mathbb{R}^+$  such that for any $0<\nu<\nu'$, $0<R<R'$, $0<\alpha<1$ and any $\eta\in {B}_{\infty}(0,\nu)$, if
\begin{align}
     &\frac{g_1(\nu,R)}{\mu}\leq \alpha<1, &\frac{g_2(\nu)}{\mu}\leq R (1-\alpha),\label{eq:conditions_neigh_th_weak_reg}
\end{align}
then for any $t\geq T$,  \eqref{pb:receding_perturbed_horizon_gen_state} has a unique local solution in $\widebar{B}(x(t-T),R)$, denoted by $\xi^*_t(\eta)$, and it satisfies:
\begin{align}
\Vert \xi^*_t(\eta)-x(t-T) \Vert\leq  \frac{g_3(\nu,R)}{\mu-g_1(\nu,R)}   \Vert\eta \Vert.  \label{eq:error_solution_perturbed_weak_reg_per}
\end{align}
\end{theorem}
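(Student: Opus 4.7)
The plan is to cast the first-order optimality system for \eqref{pb:receding_perturbed_horizon_gen_state} as a parametric equation and apply the Uniform Implicit Function Theorem (Proposition \ref{prop:uni_implicit_function_th}). I would define $F(t,\eta,\xi):=\diff_\xi\tilde{l}(t-T,t,\xi,u,\eta)$ and use the reference family $(x_{0,t},y_{0,t})=(0,x(t-T))$ with $X=\Theta$, $Y=Z=\mathbb{R}^{n_x}$. By Lemma \ref{lem:cost_MHE_state}, $F(t,0,x(t-T))=0$, and continuous differentiability of $F$ follows from Lemma \ref{lem:xi_v_diff_l_perturbed}. Since $u$ is weakly regularly persistent with $\kappa$ of finite sensitivity and the system is regularly bounded, Proposition \ref{prop:converse_hessian_lower_bound_weak_reg_per} yields $\mu>0$ (up to relabelling a constant) such that $\diff_\xi F(t,0,x(t-T))\succeq \mu I_{n_x}$ uniformly in $t\geq T$; hence $\Gamma_t=(\diff_\xi F(t,0,x(t-T)))^{-1}$ exists with $\|\Gamma_t\|\leq L:=1/\mu$, giving hypothesis \ref{as:bounded_inverse_proof}.

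The technical heart is the construction of variable-wise non-decreasing, origin-vanishing functions $g_1,g_2,g_3$ that certify the remaining hypotheses \emph{uniformly} in $t$. From the closed-form expressions of Lemma \ref{lem:xi_v_diff_l_perturbed}, $\diff_\xi F$, $F(t,\cdot,x(t-T))$ and $\diff_\eta F$ are integrals over $[t-T,t]$ of products involving the output derivatives $\diff h,\diff^2h$, the flow sensitivities $\Phi_f,\diff_\xi\Phi_f$ evaluated along $\phi_f(s;t-T,\xi,u)$, the perturbed reference $\tilde{x}(s,w)$, and its sensitivity $\diff_w\tilde{x}(s,w)$. Each factor can be bounded $t$-uniformly: regular boundedness combined with Lemma \ref{lem:regular_bound_diff} controls $\phi_f,\Phi_f,\diff_\xi\Phi_f$ on $\widebar{B}(x(t-T),R')$; Hypothesis \ref{as:bounded_diff_x_tilde_w} together with Lemma \ref{lem:boubnde_x_tilde_uniform} controls $\tilde{x}$ and $\diff_w\tilde{x}$ on $\widebar{B}_\infty(0,\nu')$; while Hypothesis \ref{as:three_times_diff} and compactness of $U$ (Hypothesis \ref{as:U_compact}) yield uniform bounds and moduli of continuity for $h,\diff h,\diff^2h$ on the resulting compact image sets. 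Composing these bounds and integrating over the fixed-length window $[t-T,t]$ produces the desired $g_1(\nu,R)$, $g_2(\nu)$ and $g_3(\nu,R)$, establishing hypotheses \ref{as:lip_non_decreasing_proof} and \ref{as:lip_non_decreasing_2_proof}.

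With $L=1/\mu$, hypothesis \ref{as:condition_delta_eps_proof} of Proposition \ref{prop:uni_implicit_function_th} becomes exactly \eqref{eq:conditions_neigh_th_weak_reg}, so the proposition yields, for each $t\geq T$, a unique $C^1$ map $\phi_t:B_\infty(0,\nu)\to B(x(t-T),R)$ satisfying $\phi_t(0)=x(t-T)$ and $F(t,\eta,\phi_t(\eta))=0$. Setting $\xi^*_t(\eta):=\phi_t(\eta)$, part \ref{eq:diff_phi} gives $\|(\diff_\xi F(t,\eta,\phi_t(\eta)))^{-1}\|\leq 1/(\mu-g_1(\nu,R))$; combining this with the $g_3$-bound on $\diff_\eta F$ and the Mean Value Inequality applied to $\phi_t$ yields \eqref{eq:error_solution_perturbed_weak_reg_per}. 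The same $g_1$-bound also gives $\diff_\xi^2\tilde{l}(t-T,t,\xi^*_t(\eta),u,\eta)\succeq(\mu-g_1(\nu,R))I_{n_x}\succ 0$, so $\tilde{l}(t-T,t,\cdot,u,\eta)$ is strictly convex on the convex ball $\widebar{B}(x(t-T),R)$; hence $\xi^*_t(\eta)$ is a \emph{strict} local minimiser, and any other local minimiser in the ball would provide a second zero of $F$, contradicting the uniqueness clause of Proposition \ref{prop:uni_implicit_function_th}.

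The main obstacle is the uniform-in-$t$ construction of $g_1,g_2,g_3$. Regular boundedness of $x$ and Hypothesis \ref{as:bounded_diff_x_tilde_w} are precisely the ingredients that prevent the evaluation sets of $\phi_f$ and $\tilde{x}$ from drifting to infinity with $t$; without them, only the $t$-local bounds of Theorem \ref{th_weak_per_robustness} would be available.
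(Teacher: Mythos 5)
Your proposal is correct and follows essentially the same route as the paper's proof: casting the stationarity condition $\diff_\xi\tilde{l}=0$ as a parametric equation with $(x_{0,t},y_{0,t})=(0,x(t-T))$, obtaining the uniform invertibility constant $L=1/\mu$ from the Grammian/Hessian lower bound, building $g_1,g_2,g_3$ from $t$-uniform bounds supplied by regular boundedness, Lemma \ref{lem:regular_bound_diff}, Hypothesis \ref{as:bounded_diff_x_tilde_w} and Lemma \ref{lem:boubnde_x_tilde_uniform}, and then invoking Proposition \ref{prop:uni_implicit_function_th} together with the Mean Value Theorem for the error bound. The only cosmetic difference is that the paper derives $g_1$ and $g_2$ explicitly as linear moduli via the Mean Value Theorem applied to $\diff_{(\xi,\eta)}\diff^2_\xi\tilde l$ and $\diff_\eta\diff_\xi\tilde l$, whereas you appeal to moduli of continuity on compact image sets; both are equivalent here.
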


\begin{proof}
See Appendix \ref{app:th_weak_reg_per_robustness}.

% Assume that $u$ is a weakly persistent input trajectory at $x_0$ and all the associated $\mathcal{K}$-functions $\kappa_t$ have finite sensitivity. 

% Sketch of proof:
% \begin{enumerate}
%   \item \label{step_hess_low_bounded} Show that  there exists $T>0$, $\mu>$, $R>0$ and $\nu'>0$ such that for any $t\geq T$,  such that if $\Vert v \Vert_{\infty}\leq \nu$ and for any $\xi \in B(x(t-T),R)$
%     \begin{align*}
%      \mathrm{d}^2_{\xi}\tilde{l}(t-T,t,\xi,x(t-T),u,v)\succeq \mu I_{n_x}.
%     \end{align*}
%     \item \label{step_loc_sol_2}  Show that for any $t\geq T$, there exists $\nu_t>0$ such that if $\Vert v \Vert_{\infty,[t-T,t]}\leq \nu_t$, then Problem \eqref{pb:receding_perturbed_horizon_gen_state} has a locally unique solution $\xi_t^*(v)$ such that $\xi_t^*(v) \in B(x(t-T),R_t)$ (Implicit function Theorem maybe with infinite dimension) 
    
%      \item \label{step_loc_sol_time_inv}  Show that there exists $\nu>0$ such that if  $\Vert v \Vert_{\infty}\leq \nu$ for any $t\geq T$:
%      \begin{align*}
%          \Vert \xi_t^*(v)-x(t-T) \Vert \leq R
%      \end{align*}
%  (Computing the differential of $ \xi_t^*(v)$, apply the mean value theorem and find a Lipschitz that dos not depend on $t$)
 
%   \item Show that if $\Vert v \Vert_{\infty}\leq \min(\nu,\nu')$, then \eqref{pb:receding_perturbed_horizon_gen_state} has a local solution in $B(x(t-T),R)$ from Step \ref{step_loc_sol_time_inv} that is unique on $B(x(t-T),R)$ as $\tilde{l}(t-T,t,\cdot,x(t-T),u,v)$ is strictly convex on $B(x(t-T),R)$ from Step \ref{step_hess_low_bounded}.
% \end{enumerate}
\end{proof}

% \begin{remark}
% For state-affine systems, weak regular persistence and regular persistence are equivalent because the Observability Grammian does not depend on $x(t-T)$ in that case. Besides, weak observability has been assessed in terms of the Observability Grammian in \cite{powel_empirical_2015}.
% \end{remark}

% \begin{remark}
% \label{rem:fast_MHE}
%  One can see from  \eqref{eq:proof_hessian_local} that under the assumptions on the Observability Grammian of Proposition \ref{prop:grammian_pos_definite} , $c(t,T,\cdot,u)$ is strongly convex on a ball around $x(t-T)$. Note that the latter property has already recently been used in Fast Moving Horizon Estimation where one wants to find approximate local solutions of Problem \eqref{pb:receding_horizon_gen_state} at each time $t$. In \cite{alessandri_fast_2017,morari_efficient_2009,kang_moving_2006,wynn_convergence_2014,zavala_stability_2010}, the time-independent lower bound on the Observability Grammian denoted by $\mu$ in  \eqref{eq:proof_hessian_local} ensures the local convergence and robustness of approximated Moving Horizon Estimators. 
%  \end{remark}

\begin{remark}
Both results in Theorem \ref{th_weak_per_robustness} and Theorem \ref{th_weak_reg_per_robustness} state that the distance between the local solution of \eqref{pb:receding_perturbed_horizon_gen_state} and the true state is at most proportional to the norm of the measurement and process noises. Theorem \ref{th_weak_per_robustness} is a direct consequence of the classical Implicit Function Theorem. We state it as it gives an element of comparison to Theorem \ref{th_weak_reg_per_robustness} which is the main contribution of the section. Indeed, depending on the evolution  of $K_t$ in $t$ in Theorem \ref{th_weak_per_robustness}, the norm of the noise $\Vert \eta \Vert_{t,T}$ may need to vanish when $t$ goes to infinity in order to keep the right-hand side bounded. It significantly limits the class of noise trajectories that can be dealt with by the system. On the contrary, in the setting of Theorem \ref{th_weak_reg_per_robustness}, the stability of the solution of  \eqref{pb:receding_perturbed_horizon_gen_state} is ensured for any sufficiently small bounded perturbation trajectory since the parameters $(R,\nu,\mu)$ do not depend on $t$. Therefore, weak regular persistence is more useful in practice than weak persistence.   
\end{remark}

\begin{remark}
The explicit expressions of $g_1$ and $g_2$ in Theorem \ref{th_weak_reg_per_robustness} are not included in order to clarify its link to the time-uniform Implicit Function Theorem as presented in Proposition \ref{prop:uni_implicit_function_th}. However, by looking more closely at Equations \eqref{eq:lip_constant_xi_v_xi_xi_diff_4} and \eqref{eq:lip_constant_v_xi_diff_2} in Appendix \ref{app:th_weak_reg_per_robustness} then the conditions \eqref{eq:conditions_neigh_th_weak_reg} read:
\begin{align}
    &a_1(\nu,R)(\nu+R)\leq \alpha\mu,&\frac{a_2(\nu)\nu}{R}\leq (1-\alpha)\mu,\label{eq:conditions_detailed}
\end{align}
where $a_2(\nu)>0$ and $a_1(\nu,R)>0$ are non-decreasing with respect to $\nu$ and $R$. 
Note that $R$ represents the radius of the neighbourhood of $x(t-T)$ where \eqref{pb:receding_perturbed_horizon_gen_state} is known to have a unique local solution and that $\nu$ represents the maximal amount of noise allowed in order to keep stability. It is clear that \eqref{eq:conditions_detailed}  encodes a trade-off between $R$ and $\nu$ regulated by the choice of $0<\alpha<1$ which is arbitrary. 
\end{remark}

\begin{remark}
Similar existing observability concepts for MHE which can be found in \cite{alessandri_moving-horizon_2008,alessandri_moving-horizon_2016,morari_efficient_2009,kang_moving_2006,wynn_convergence_2014} are all stated uniformly with respect to the input unlike the ones introduced in this paper. In Section \ref{sec:example_loc}, we provide an example of system that is not observable uniformly with respect to the input.
\end{remark}

\section{An example: bearing-only localisation }\label{sec:example_loc}
In this section, we present the problem of bearing-only localisation where one wants to recover the position of a mobile sensor using measurements of the direction toward a beacon.  Thus, we consider the following  2D dynamics and observation equation:

\begin{align}
    \dot{x}=&u,\label{eq:dyn_1landmark_continuous_time}\\
    y=&h(x):=\frac{\ell-x}{\Vert \ell-x\Vert},
\end{align}
where $\ell\in \mathbb{R}^2$ is assumed to be known a priori and $u$ is an input trajectory valued in $\mathbb{R}^2$. Let $x_0 \in \mathbb{R}^2$ be an initial condition such that $x_0 \neq \ell$ and $t_0=0$ be the reference initial time. In this case the solution flow $\phi$ and its differential $\Phi$ read: 
\begin{align}
    \phi(t;0,x_0,u)=&x_0+\int_0^tu(s)\mathrm{d}s,\label{eq:flow_single_int}\\
    \Phi(t;0,x_0,u)=&I_2.\label{eq:diff_flow_single_int}
\end{align}
 For any $\xi=(\xi_1,\xi_2)\in \mathbb{R}^2$ and any $\ell=(\ell_1,\ell_2)\in \mathbb{R}^2$ such that $r=\Vert \ell-\xi\Vert>0$, let  $H=\diff  h$ be:
    \begin{align}
      H(\xi)=  \frac{1}{r^3}
    \begin{bmatrix}
    -(\xi_2-\ell_2)^2 & (\xi_1-\ell_1)(\xi_2-\ell_2)\\
    (\xi_1-\ell_1)(\xi_2-\ell_2) &  -(\xi_1-\ell_1)^2
    \end{bmatrix}.\label{eq:diff_h_bearing}
    \end{align}

From \eqref{eq:flow_single_int}, \eqref{eq:diff_flow_single_int}, and \eqref{eq:diff_h_bearing} and straightforward computations one gets, for any $T>0$, any $t\geq T$ and any $\xi \in \mathbb{R}^2$, that:
\begin{align*}
   \mathcal{C}(t,T,\xi,u)= \int_{t-T}^{t}  \frac{1}{{r}^4(s)}\begin{bmatrix}
                                   e_2^2(s) &           -e_1(s)e_2(s)\\
                                    -e_1(s)e_2(s) &  e_1^2(s)
                                \end{bmatrix}\mathrm{d}s,
\end{align*}
where ${r}(s)=\Vert \ell-{x}(s)\Vert$, $e(s)=(e_1(s),e_2(s))={x}(s)-\ell$ and ${x}(s)=\phi(s;0,x_0,u)$. In the following, we define three classes of input trajectories. 
 \begin{enumerate}[label=\arabic*.,wide, labelindent=0pt]
    \item \textit{Radial constant input trajectory}
    
     for any $\sigma \in \mathbb{R}$, and any $s\geq0$, we define the radial constant input trajectory $u_{cst}(s,\sigma)$ as follows:
     \begin{align}
         u_{cst}(s,\sigma)=\sigma(\ell-x_0);\label{eq:ex_input_cst}
     \end{align}
     \item  \textit{Circular input trajectory}
     
     for any $\omega>0$ and $r_0>0$ and any $s\geq0$, we define the circular input trajectory  as follows:
     \begin{align}
    u_{circ}(s,\omega,r_0)&=\omega r_0\begin{bmatrix}
                                        -\sin(\omega s+{\psi}_0)\\
                                        \cos(\omega s+{\psi}_0)
                                      \end{bmatrix},\label{eq:ex_input_circ}
     \end{align}
  
     where $r_0=\Vert \ell-x_0\Vert$, $\psi_0=\textrm{atan2}(\ell_1-x_{0,1},\ell_2-x_{0,2})$ and $x_0=(x_{0,1},x_{0,2})$;
     
     \item  \textit{Outward spiral input trajectory}
     
      for any $\omega>0$, $\alpha>0$, $r_0>0$ and any $s\geq0$, we define the outward spiral trajectory  as follows:
     \begin{align}
    u_{spi}(s,\omega,\alpha,r_0)&=\omega r_0\exp(\alpha s)\left(\begin{bmatrix}
                                        -\sin(\omega s+{\psi}_0)\\
                                        \cos(\omega s+{\psi}_0)
                                      \end{bmatrix}+\alpha\begin{bmatrix}
                                        \cos(\omega s+{\psi}_0)\\
                                        \sin(\omega s+{\psi}_0)
                                      \end{bmatrix}\right).\label{eq:ex_input_spi}
     \end{align}
    \end{enumerate}
 In Proposition \ref{prop:ex_per}, we show that input trajectories in \eqref{eq:ex_input_cst} represent those that are not weakly persistent. Then we show that input trajectories in \eqref{eq:ex_input_circ} represent weakly regularly persistent ones  and that \eqref{eq:ex_input_spi} represent weakly persistent input trajectories such that the associated Observability Grammian can never be lower bounded as in Corollary \ref{cor:grammian_bound_definite_weak_reg_per}.

\begin{proposition}\label{prop:ex_per}

The following statements are true:
\begin{itemize}
    \item For any $\sigma \in \mathbb{R}$, $u_{cst}(\cdot,\sigma)$ is neither  a universal input trajectory of system \eqref{eq:dyn_1landmark_continuous_time} nor a weakly persistent input trajectory  at $x_0$.
    \item For any $\omega>0$ and $r_0>0$ ,  $u_{circ}(\cdot,\omega,r_0)$ is a weakly regularly persistent input trajectory of system \eqref{eq:dyn_1landmark_continuous_time} at $x_0$.
    \item   For any $\omega>0$, $\alpha>0$ and $r_0>0$,  $u_{spi}(\cdot,\omega,\alpha,r_0)$ is a weakly persistent input trajectory of system \eqref{eq:dyn_1landmark_continuous_time} at $x_0$ and for any $T>0$, $\lim_{t\rightarrow +\infty}\Vert  \mathcal{C}(t,T,x(t-T),u_{spi})\Vert=0$.
\end{itemize}

\end{proposition}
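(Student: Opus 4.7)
My plan is to reduce each of the three claims to a computation of the Observability Grammian \eqref{eq:grammian_obs_gen} and then invoke Corollaries \ref{cor:grammian_pos_definite_weak_per} and \ref{cor:grammian_bound_definite_weak_reg_per}. The basic device is to parametrize each trajectory in polar coordinates around $\ell$, writing $e(s):=x(s)-\ell=r(s)(\cos\theta(s),\sin\theta(s))$; substitution into \eqref{eq:grammian_obs_gen} yields the integrand $\frac{1}{r^2(s)}\bigl(I_2-\hat e(s)\hat e(s)^\top\bigr)$ with $\hat e=(\cos\theta,\sin\theta)$, which is a rank-one projection scaled by $1/r^2$.

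For $u_{cst}$, integration gives $x(s)=x_0+s\sigma(\ell-x_0)$, so $e(s)$ stays on the line spanned by $\ell-x_0$ and the bearing takes only the two values $\pm e_0$ with $e_0:=(\ell-x_0)/\|\ell-x_0\|$. To disprove universality, I exhibit collinear pairs of initial states whose trajectories never cross $\ell$ and whose bearings stay on the same branch: for $\sigma\leq 0$, $(x_0,x_0+\lambda(\ell-x_0))$ with $\lambda\in(0,1)$ produces the constant bearing $+e_0$ on both; for $\sigma>0$, two distinct points $x_0+\lambda_i(\ell-x_0)$ with $\lambda_i>1$ produce the constant bearing $-e_0$. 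To disprove weak persistence at $x_0$, I fix any $T>0$, pick $t\geq T$ with $1/\sigma\notin[t-T,t]$ (automatic when $\sigma\leq 0$; otherwise $t>T+1/\sigma$ works), and take $\xi_2=x(t-T)+\epsilon(\ell-x_0)$. Then $\ell-\phi_f(s;t-T,\xi_2,u_{cst})=(1-s\sigma-\epsilon)(\ell-x_0)$ keeps the sign of $1-s\sigma$ on $[t-T,t]$ for $\epsilon$ small, so $l(t-T,t,x(t-T),\xi_2,u_{cst})=0$ with $\xi_2$ arbitrarily close to $x(t-T)$, violating \eqref{eq:weakly_per_input}.

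For $u_{circ}$, direct integration places $x(s)$ on the circle of radius $r_0$ around $\ell$, giving $r(s)\equiv r_0$ and $\theta(s)=\omega s+\mathrm{const}$. The half-angle identities reduce the Grammian to
\begin{align*}
\mathcal{C}(t,T,x(t-T),u_{circ})=\frac{T}{2r_0^2}I_2-\frac{1}{2r_0^2}\int_{t-T}^{t}\begin{pmatrix}\cos 2\theta(s) & \sin 2\theta(s)\\ \sin 2\theta(s) & -\cos 2\theta(s)\end{pmatrix}\diff s,
\end{align*}
and the oscillating remainder has operator norm at most $\sqrt{2}/(r_0^2\omega)$ (vanishing exactly when $T\in\frac{2\pi}{\omega}\mathbb{N}^*$). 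Hence $\mathcal{C}\succeq\mu I_2$ uniformly in $t$ for $T$ large enough, and regular boundedness in the sense of Definition \ref{def:regular_bound} is immediate from $x(\cdot)$ being trapped on a circle while $\int_{t-T}^{s}u_{circ}\,\diff\tau$ remains bounded by $2r_0$. Corollary \ref{cor:grammian_bound_definite_weak_reg_per} then yields weak regular persistence.

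For $u_{spi}$, the ansatz $x(s)-\ell=r_0 e^{\alpha s}e^{i(\omega s+\psi_0)}$ (in complex notation, matching \eqref{eq:ex_input_spi} up to the angle convention) reproduces $\dot x=u_{spi}$, so $r(s)=r_0 e^{\alpha s}$ grows exponentially while $\theta(s)=\omega s+\mathrm{const}$ still rotates uniformly. The trace $\mathrm{tr}(\mathcal{C})=\int_{t-T}^{t}\frac{\diff s}{r_0^2 e^{2\alpha s}}=\frac{e^{-2\alpha(t-T)}(1-e^{-2\alpha T})}{2\alpha r_0^2}$ tends to $0$ as $t\to\infty$, forcing $\|\mathcal{C}\|\leq\mathrm{tr}(\mathcal{C})\to 0$. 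Positive definiteness at each fixed $t\geq T$ comes from the strict Cauchy--Schwarz inequality applied to $\sin\theta$ and $\cos\theta$ with weight $w(s)=1/(r_0^2 e^{2\alpha s})$:
\begin{align*}
\det\mathcal{C}=\Bigl(\int_{t-T}^{t}w\sin^2\theta\,\diff s\Bigr)\Bigl(\int_{t-T}^{t}w\cos^2\theta\,\diff s\Bigr)-\Bigl(\int_{t-T}^{t}w\sin\theta\cos\theta\,\diff s\Bigr)^2>0,
\end{align*}
because equality would force $\tan\theta$ constant a.e.\ on $[t-T,t]$, which is incompatible with $\omega>0$. Combined with $\mathrm{tr}(\mathcal{C})>0$ this gives $\mathcal{C}\succ 0$, and Corollary \ref{cor:grammian_pos_definite_weak_per} yields weak persistence. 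The main hurdle is precisely this spiral case: the diagonal-plus-oscillation bound used for $u_{circ}$ fails to give a quantitative $\lambda_{\min}>0$ uniformly in $(\omega,\alpha)$, making the determinant argument essential; symmetrically, its purely qualitative character is consistent with $\|\mathcal{C}\|\to 0$ and the failure of weak regular persistence.
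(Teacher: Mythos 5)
Your proof is correct and follows the same high-level strategy as the paper (compute the Observability Grammian and invoke Corollaries \ref{cor:grammian_pos_definite_weak_per} and \ref{cor:grammian_bound_definite_weak_reg_per}), but the computations take a genuinely different route. For $u_{cst}$ the paper simply asserts that every $\xi\in\ell+\mathbb{R}(\ell-x_0)$ produces the constant bearing $(\ell-x_0)/\Vert\ell-x_0\Vert$ and hence $l=0$; you are more careful, tracking the sign flip of the bearing across $\ell$ and choosing $t$ so that the crossing time $1/\sigma$ avoids $[t-T,t]$, which closes a small gap the paper leaves open. For $u_{circ}$ and $u_{spi}$ the paper computes both eigenvalues of $\mathcal{C}$ in closed form ($\lambda_\pm^{circ}=\frac{1}{2r_0^2}\bigl[T\pm\vert\sin(\omega T)\vert/\omega\bigr]$, and an analogous but messier expression for the spiral) and reads off positivity, the uniform lower bound, and the decay directly; you instead use a diagonal-plus-oscillation norm estimate for the circle and a trace/determinant argument with strict Cauchy--Schwarz for the spiral. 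Your route buys robustness: the Cauchy--Schwarz step gives $\mathcal{C}\succ 0$ for \emph{every} $T>0$ rather than only for $T$ exceeding the paper's explicit threshold, and it sidesteps the ``cumbersome computations'' the paper alludes to; the cost is that you lose the explicit eigenvalues, so for the circle you only get the uniform bound for $T$ large enough (which still suffices, since weak regular persistence only requires some $T$). Two minor points: the oscillatory remainder in the circular case vanishes when $T\in\frac{\pi}{\omega}\mathbb{N}^*$, not $\frac{2\pi}{\omega}\mathbb{N}^*$; and when invoking Corollary \ref{cor:grammian_bound_definite_weak_reg_per} you should also record, as the paper does, that Hypotheses \ref{as:three_times_diff} and \ref{as:U_compact} hold (the dynamics are linear and $u_{circ}$ is valued in a compact set).
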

\begin{proof}
See Appendix \ref{app:ex_per}.
\end{proof}

\begin{remark}
The first item in Proposition \ref{prop:ex_per} shows that System \eqref{eq:dyn_1landmark_continuous_time} is not uniformly observable in the sense of  Definition \ref{def:universal_input}. Thus, the MHE algorithms mentioned in Remark \ref{rem:MHE_estimation_error}, which require uniform observability properties, could not directly be applied to this example without an adequate choice of input trajectory. However, the second item in Proposition \ref{prop:ex_per} shows that by using circular input trajectories, one recovers the properties of the associated MHE problem discussed in Section \ref{sec:weak_per}. The last item shows that there exist input trajectories that can never be proved to be weakly regularly persistent using the Observability Grammian.

\end{remark}

\begin{remark}
   Note that System \eqref{eq:dyn_1landmark_continuous_time} does not satisfy Hypothesis \ref{as:bounded_diff_x_tilde_w} as any nonzero constant process noise would make $\tilde{x}$ unbounded. However, System \eqref{eq:dyn_1landmark_continuous_time} could be modified by adding a linear  locally stabilising control feedback term to $u_{circ}$ in order to robustly track a circle for example. In this case, Hypothesis \ref{as:bounded_diff_x_tilde_w} would hold as System     \eqref{eq:diff_x_perturbed_w} in Lemma \eqref{lem:diff_process_noise} would become robustly stable in the presence of small perturbations.
\end{remark}
\section*{Conclusion}

In this paper, we have first studied connections between classical nonlinear observability and optimisation notions. Then, we have introduced the concepts of weakly and weakly regularly persistent input trajectory along with their connection to the Observability Grammian and the existence and uniqueness of solutions to the problem of Moving Horizon Estimation.  Then, thanks to a specifically designed time-uniform Implicit Function Theorem, we have shown that these conditions imply the stability of MHE solutions with respect to small additive perturbations in the measurements both uniformly and non-uniformly in time. Finally, we presented an example of a nonlinear system where classical uniform observability conditions do not hold along with  examples and counter-examples of weakly persistent and  weakly regularly persistent input trajectories. In future works, one could introduce an arrival cost in the MHE problem and study the stability properties of the resulting optimisation problem in the spirit of \cite{zavala_stability_2010}.

% Future works could include an extension of the stability result to  the case of process noise, again using Theorem \ref{prop:uni_implicit_function_th}. 
\section*{Acknowledgements}
This work received funding from the Australian Government, via grant AUSMURIB000001 associated with ONR MURI grant N00014-19-1-2571. We would like to thank Abhishek Bhardwaj for his comments.
\appendix

\section{Differentials of $l$}\label{app:1_2_diff_l}

\begin{lemma}[First and second order differential of  $l(t-T,t,\xi_1,\cdot,u)$ ] \label{lem:1_2_diff_l}
For any $T>0$, any $t\geq T$, any input trajectory $u$, $l(t,T,\xi_1,\cdot,u)$ is twice continuously differentiable and for any $(\xi_1,\xi_2)\in(\mathbb{R}^{n_x})^2$, $\diff_{\xi_2}l(t-T,t,\xi_1,\xi_2,u)$ and  $\diff^2_{\xi_2}l(t-T,t,\xi_1,\xi_2,u)$ read:
\begin{align}
    &\diff_{\xi_2}l(t-T,t,\xi_1,\xi_2,u)=2\int_{t-T}^{t}(h(x_2(s),u(s))-h(x_1(s),u(s)))^T H(x_2(s),u(s))\Phi_f(s,\xi_2) \mathrm{d}s, \label{eq:1_diff_l}
\end{align}
where  $H(\xi_1,u(s))=\diff_x h(\xi_1,u(s))$, $x_1(s)=\phi_f(s;t-T,\xi_1,u)$, $x_2(s)=\phi_f(s;t-T,\xi_2,u)$ and  $\Phi_f(s,\xi_2)={\diff}_x\phi_f(s;t-T,\xi_2,u)$.
\begin{align}
    \diff^2_{\xi_2}l(t-T,t,\xi_1,\xi_2,u) = 2\mathcal{C}&(t,T,\xi_2,u)+2\mathcal{R}(t,T,\xi_1,\xi_2,u),\label{eq:2_diff_l}
\end{align}
where for any $(\Delta \xi_2,\Delta' \xi_2) \in (\mathbb{R}^{n_x})^2$: 
\begin{align*}
   \Delta \xi_2^T\mathcal{C}(t,T,\xi_2,u)\Delta' \xi_2 &=
   \int_{t-T}^{t}\Delta \xi_2^T\Phi_f(s,\xi_2)^T H^T(x_2(s),u(s)) H(x_2(s),u(s))\Phi_f(s,\xi_2)\Delta' \xi_2 \mathrm{d}s,\\
     \Delta \xi_2^T\mathcal{R}(t,T,\xi_1,\xi_2,u)\Delta' \xi_2&=
    \int_{t-T}^{t}(h(x_2(s),u(s))-h(x_1(s),u(s)))^T \mathrm{}{\xi_2}(H\Phi_f(s,\xi_2)\Delta' \xi_2)\cdot\Delta \xi_2  \mathrm{d}s,
\end{align*}
where for any $\xi_2 \in \mathbb{R}^{n_x}$, $H\Phi_f(s,\xi_2)=H(x_2(s),u(s))\Phi_f(s,\xi_2)$ and for any $\Delta \xi_2 \in \mathbb{R}^{n_x}$:
\begin{align*}
    &\diff_{\xi_2}H\Phi_f(s,\xi_2)\cdot \Delta \xi_2 =\\
    &H(x_2(s),u(s)) (\diff_{\xi_2}\Phi_f(s,\xi_2)\cdot\Delta \xi_2)+(\diff_{x}H(x_2(s),u(s))\cdot(\Phi_f(s,\xi_2) \cdot \Delta \xi_2))\Phi_f(s,\xi_2).
\end{align*}
Besides, $\diff^2_{\xi_2}l(t-T,t,\xi_1,\xi_1,u) = 2\mathcal{C}(t,T,\xi_1,u)$.

\end{lemma}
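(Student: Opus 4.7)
The plan is to reduce everything to differentiation under the integral sign on the compact interval $[t-T,t]$, then apply chain and product rules carefully to the integrand. The first ingredient I would invoke is classical smooth dependence of the flow: since $f$ is twice continuously differentiable, Theorem 2.3.2 in \cite{bressan_introduction_2007} (already cited in the proof of Lemma \ref{lem:cost_MHE_state}) gives that $\xi_2\mapsto \phi_f(s;t-T,\xi_2,u)$ is twice continuously differentiable, with first differential $\Phi_f(s,\xi_2)$ and second differential $\diff_{\xi_2}\Phi_f(s,\xi_2)$ that are jointly continuous in $(s,\xi_2)$. Composing with $h$, which is also twice continuously differentiable, yields the same regularity for the integrand $\|h(x_2(s),u(s))-h(x_1(s),u(s))\|^2$ in $\xi_2$, uniformly on the compact interval $[t-T,t]$, which legitimizes swapping $\diff_{\xi_2}$ with the integral.

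For the first-order differential, I would write the integrand as $\|g(s,\xi_2)\|^2$ with $g(s,\xi_2)=h(x_2(s),u(s))-h(x_1(s),u(s))$ and note that only the first term depends on $\xi_2$. The chain rule gives $\diff_{\xi_2}g(s,\xi_2)=H(x_2(s),u(s))\Phi_f(s,\xi_2)$, and then $\diff_{\xi_2}\|g\|^2=2g^{T}\diff_{\xi_2}g$ produces exactly \eqref{eq:1_diff_l}.

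For the second-order differential, I would differentiate the integrand of \eqref{eq:1_diff_l} again in $\xi_2$, viewing it as a bilinear product $A(\xi_2)^{T}B(\xi_2)$ with $A(\xi_2)=h(x_2(s),u(s))-h(x_1(s),u(s))$ and $B(\xi_2)=H(x_2(s),u(s))\Phi_f(s,\xi_2)$. Since $\diff_{\xi_2}A=H(x_2,u)\Phi_f$, the $\diff A$ contribution yields the quadratic form $\Phi_f^{T}H^{T}H\Phi_f$ integrated over $[t-T,t]$, which is precisely $2\mathcal{C}(t,T,\xi_2,u)$. The $\diff B$ contribution gives the $\mathcal{R}$ term, and $\diff_{\xi_2}B$ itself is obtained by a further product rule on $H(x_2(s),u(s))\Phi_f(s,\xi_2)$: differentiating $\Phi_f$ in $\xi_2$ yields the $H(x_2,u)(\diff_{\xi_2}\Phi_f\cdot\Delta\xi_2)$ summand, and differentiating $H(x_2(s),u(s))$ through $x_2$ by the chain rule yields the $(\diff_x H\cdot (\Phi_f\Delta\xi_2))\Phi_f$ summand in the displayed formula. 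Twice-continuous differentiability of $l(t-T,t,\xi_1,\cdot,u)$ then follows from joint continuity in $(s,\xi_2)$ of the resulting integrand. Finally, for the identity $\diff^2_{\xi_2}l(t-T,t,\xi_1,\xi_1,u)=2\mathcal{C}(t,T,\xi_1,u)$, I would use uniqueness of the flow to conclude that $x_2(s)=x_1(s)$ identically on $[t-T,t]$ when $\xi_2=\xi_1$, hence $A(\xi_1)\equiv 0$ and the entire $\mathcal{R}$ contribution vanishes.

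The computation is essentially bookkeeping; the only subtle point is the justification of differentiation under the integral sign for the second derivative, which I expect to be the main (but still routine) obstacle. It is handled by observing that the integrand and both of its partial derivatives in $\xi_2$ are continuous in $(s,\xi_2)$ on $[t-T,t]\times V$ for any compact neighbourhood $V$ of $\xi_2$, hence bounded there, so dominated convergence applies. No additional hypotheses beyond the standing twice continuous differentiability of $f$ and $h$ are needed.
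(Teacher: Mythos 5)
Your proposal is correct and follows essentially the same route as the paper: twice continuous differentiability of the flow via Theorem 2.3.2 of Bressan--Piccoli, differentiation under the integral sign justified by continuity on the compact interval, chain and product rules for the explicit formulas, and the vanishing of $\mathcal{R}(t,T,\xi_1,\xi_1,u)$ (since $x_2\equiv x_1$ when $\xi_2=\xi_1$) for the final identity. Your write-up is in fact more detailed than the paper's two-line sketch, but no new idea is involved.
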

\begin{proof}
 Note that, for any $T>0$, any $t\geq T$, any input trajectory $u$, and $\xi \in \mathbb{R}^{n_x}$ and according to Theorem 2.3.2 in \cite{bressan_introduction_2007} applied twice, $\phi_f(s;t-T,\xi_1,u)$ is twice continuously differentiable since $f$ is. Since $h$ is also twice continuously differentiable then $\diff_{\xi_2}l$ and $\diff^2_{\xi_2}l$ exist. Note that \eqref{eq:1_diff_l} and \eqref{eq:2_diff_l} can be derived by the theorems of derivation inside integrals and the application of the chain rule while the last equation results from the fact that for any  $\xi_1 \in \mathbb{R}^{n_x}$, $\mathcal{R}(t,T,\xi_1,\xi_1,u)=0$.
\end{proof}

\section{Proof of Proposition \ref{prop:hessian_pos_definite_weak_per}}\label{app:hessian_pos_definite_weak_per}

\begin{proof}
 Assume that  there exists $T>0$ such that for any $t\geq T$, there exists $R_t>0$ such that for any $(\xi_1,\xi_2)\in (\widebar{B}(x(t-T),R_t))^2$,  \eqref{eq:hess_pos_prop} holds. From Lemma \ref{lem:cost_MHE_state}, for $T>0$ and $t\geq T$, and $\xi_1\in \mathbb{R}^{n_x}$ one has $l(t,T,\xi_1,\xi_1,u)=0$  and ${\diff}_{\xi_2}l(t,T,\xi_1,\xi_1,u)=0$. Moreover, from the mean value form of the Taylor expansion of $l(t-T,t,\xi_1,\cdot,u)$ at $\xi_1$ (see Equation $(b)$ in  Proposition A.23 of \cite{bertsekas_nonlinear_1997}), one has that for any $(\xi_1,\xi_2)\in (\widebar{B}(x(t-T),R_t))^2$: 
\begin{align}
    l&(t,T,\xi_1,\xi_2,u)=\frac{1}{2}(\xi_2-\xi_1)^T\mathrm{d}^2_{\xi_2}l(t-T,t,\xi_1,\chi,u)(\xi_2-\xi_1),\label{eq:mean_value_l_state}
\end{align}
with $\chi=(1-\lambda)\xi_1+\lambda \xi_2$ and $0<\lambda<1$. Since $\chi \in\widebar {B}(x(t-T),R_t) $, one has, from  \eqref{eq:hess_pos_prop}, that $\mathrm{d}^2_{\xi_2}l(t-T,t,\xi_1,\chi,u)\succ 0$. By denoting by $\mu_t$ the smallest eigenvalue of $\mathrm{d}^2_{\xi_2}l(t-T,t,\xi_1,\chi,u)$, one gets that for any $(\xi_1,\xi_2)\in (\widebar{B}(x(t-T),R_t))^2$, $l(t,T,\xi_1,\xi_2,u)\geq\frac{\mu_t}{2}\Vert\xi_2-\xi_1\Vert^2$,
and the results is proven by choosing $\kappa_t(r)=\frac{\mu_t}{2}r^2$.
For the converse, assume that if $u$ is a weakly persistent input trajectory at $x_0$ and all the associated $\mathcal{K}$-functions $\kappa_t$ have finite sensitivity. Then, there exists $T>0$ such that for any $ t\geq T$ there exist $R_t>0$, and a $\mathcal{K}$-function $\kappa_t$ such that for any $(\xi_1,\xi_2)\in (\widebar{B}(x(t-T),R'_t))^2$:
\begin{align}
      l(t-T,t,\xi_1,\xi_2,u)\geq\kappa_t(\Vert \xi_1 -\xi_2 \Vert),
\end{align}
\begin{align}
   \mu'_t= \inf_{\Vert \xi \Vert \leq R'_t} \frac{\kappa_t(\Vert \xi \Vert)}{\Vert \xi \Vert^2}>0.
\end{align}
In particular, for any $\xi\in\widebar {B}(x(t-T),R'_t)$:
\begin{align}
     l(t-T,t,x(t-T),\xi,u)\geq \mu'_t \Vert \xi- x(t-T) \Vert^2.\label{eq:lower_bound_l_proof}
\end{align}
                  
From the Taylor's expansion of $l(t-T,T,x(t-T),\cdot,u)$ at $x(t-T)$, see  Equation $(c)$ in  Proposition A.23 of \cite{bertsekas_nonlinear_1997},  and from Lemma \ref{lem:cost_MHE_state}, for $\xi$ in a neighborhood of  $x(t-T)$, one gets that:
\begin{align}
    &l(t,T,x(t-T),\xi,u)=\frac{1}{2}w^T\mathrm{d}^2_{\xi_2}l(t-T,t,x(t-T),x(t-T),u)w +\Vert w \Vert^2\theta(\xi),\label{eq:taylor_l}
\end{align}
where $w=\xi-x(t-T)$ and  $\lim_{\xi \rightarrow x(t-T)}\theta(\xi)=0$.
By further combining  \eqref{eq:lower_bound_l_proof} and \eqref{eq:taylor_l}, one gets for $\xi$ in a neighborhood of $x(t-T)$ such that $\xi\neq x(t-T)$:
\begin{align*}
    \frac{w^T \mathrm{d}^2_{\xi_2}l(t-T,t,x(t-T),x(t-T),u)w}{\Vert w \Vert^2}&\geq \tilde{\mu'_t}+2\theta(\xi),\\
    \frac{w^T \mathrm{d}^2_{\xi_2}l(t-T,t,x(t-T),x(t-T),u) w}{\Vert w \Vert^2}&\geq \mu_t,
\end{align*}
where $\mu_t =\frac{\tilde{\mu_t}}{2}$. Thus, $\mathrm{d}^2_{\xi_2}l(t-T,t,x(t-T),x(t-T),u)\succ 0$. Then by continuity of the smallest eigenvalue and of $(\xi_1,\xi_2)\rightarrow \mathrm{d}^2_{\xi_2}l(t-T,t,\xi_1,\xi_2,u)$, there exist $R_t>0$ such that for any $(\xi_1,\xi_2)\in (\widebar{B}(x(t-T),R_t))^2$, $ \mathrm{d}^2_{\xi_2}l(t-T,t,\xi_1,\xi_2,u)\succ0$, and the result is proven. 
\end{proof}

% \section{Proof of Proposition \ref{prop:hessian_lower_bound_definite_weak_reg_per}}\label{app:hessian_lower_bound_definite_weak_reg_per}
% \begin{proof}
%  Assume that there exist $T>0$, $\mu>0$ and $R>0$ such that for any $t\geq T$ and for any $(\xi_1,\xi_2)\in (\widebar{B}(x(t-T),R))^2$, \eqref{eq:hess_low_bound_prop} holds. Similarly to the proof of Proposition \ref{prop:hessian_pos_definite_weak_per}, one gets  from Lemma \ref{lem:cost_MHE_state} and from the mean value form of the Taylor expansion of $l(t-T,t,\xi_1,\cdot,u)$ at $\xi_1$ that for any $(\xi_1,\xi_2)\in (\widebar{B}(x(t-T),R))^2$: 
% \begin{align}
%     l&(t,T,\xi_1,\xi_2,u)=\notag\\
%                     &\frac{1}{2}(\xi_2-\xi_1)^T\mathrm{d}^2_{\xi_2}l(t-T,t,\xi_1,\chi,u)(\xi_2-\xi_1),
% \end{align}
% with $\chi=(1-\lambda)\xi_1+\lambda \xi_2$ and $0<\lambda<1$. Since $\chi \in\widebar {B}(x(t-T),R) $, one has, from  \eqref{eq:hess_low_bound_prop}, that $\mathrm{d}^2_{\xi_2}l(t-T,t,\xi_1,\chi,u)\succeq \mu$. Thus, for any $(\xi_1,\xi_2)\in (\widebar{B}(x(t-T),R))^2$, one has: 
% \begin{align*}
%     l&(t,T,\xi_1,\xi_2,u)\geq\frac{\mu}{2}\Vert\xi_2-\xi_1\Vert^2,
% \end{align*}  
% and the results is proven by choosing $\kappa(r)=\frac{\mu}{2}r^2$.
% \end{proof}

\section{Proof of Lemma \ref{lem:regular_bound_diff}}\label{app:regular_bound_diff}

 \begin{proof}
 Assume that System \eqref{eq:general_dyn_continous_time} is regularly bounded at $x_0$ with horizon $T$. Then there exists $L>0$ and $R>0$ such that for any $t\geq T$, any $s\in [t-T,t]$ and any $\xi\in \widebar{B}(x(t-T),R)$, 
 \begin{align}
    \Vert  \phi_f(s;t-T,\xi,u) \Vert \leq L.\label{eq:regular_bound_proof}
\end{align}
 
 According to Theorem 2.3.2 in \cite{bressan_introduction_2007},  for any $t\geq T$ and $s\in [t-T,t]$, and any $\xi\in \widebar{B}(x(t-T),R) $, $\Phi_f(s;t-T,\xi,u)=M(s,t-T)$  is the solution  of the following matrix-valued linear Cauchy problem:
\begin{align*}
    \diff_s M(s,t-T)&=\diff_{x} f(\phi_f(s;t-T,\xi,u) ,u(s))M(s,t-T),\\
    M(t-T,t-T)&=I_{n_x}.
\end{align*}
By integrating on $[t-T,t]$ and taking the norm, one gets for any $t\geq T$ and $s\in [t-T,t]$:
\begin{align}
    \Vert M(s,t-T)\Vert &\leq \Vert  M(t-T,t-T)\Vert+ &\int_{t-T}^t \Vert\diff_{x} f(\phi_f(s;t-T,\xi,u) ,u(s))\Vert \Vert M(s,t-T)\Vert \mathrm{d}s.
\end{align}
By assumption, $\diff_{x}f$ is continuous. Thus, from Hypothesis \ref{as:U_compact} and \eqref{eq:regular_bound_proof}, there exists $\sigma_1>0$ such that  for any $t\geq T$, $s\in [t-T,t]$, and any $\xi\in \widebar{B}(x(t-T),R)$, $\Vert\diff_{x} f(\phi_f(s;t-T,\xi,u) ,u(s))\Vert \leq\sigma_1$. This leads  for any $t\geq T$, $s\in [t-T,t]$, and any $\xi\in \widebar{B}(x(t-T),R)$ to:
\begin{align*}
    \Vert M(s,t-T)\Vert \leq 1+\sigma_1\int_{t-T}^t \Vert M(s,t-T)\Vert \mathrm{d}s.
\end{align*}
By Gronwall Lemma, $\Vert \Phi_f(s;t-T,\xi,u)\Vert \leq L_1$ where $L_1=\exp(\sigma_1 T)>0$. One can obtain \eqref{eq:regular_bound_2_diff} using the same argument by applying Theorem 2.3.2 in \cite{bressan_introduction_2007} to the system represented by $(M(s,t-T),\xi)$ and combining it with \eqref{eq:regular_bound_proof} and \eqref{eq:regular_bound_1_diff}. Finally, under Hypothesis \ref{as:three_times_diff},  $\diff^2_{\xi_2} \Phi_f(s;t-T,\xi,u)$ is well defined and \eqref{eq:regular_bound_3_diff} can be obtained similarly.
 \end{proof}
\section{Proof of Lemma \ref{lem:obs_gram_hess_regular}}\label{app:obs_gram_hess_regular}

\begin{proof}

{$\ref{item:lower_bound_grammian_obs_lem}\Rightarrow \ref{item:lower_bound_hess_lem}$}:
Assume that \ref{item:lower_bound_grammian_obs_lem} holds. Then, there exists $\mu'>0$ such that for any $t\geq T$:
\begin{align}
     \mathcal{C}(t,T,x(t-T),u)\succeq \mu'I_{n_x}.\label{eq:hessian_lower_bound_ref_proof}
\end{align}
We recall that for any $t\geq T$:
\begin{align}
   \mathcal{C}(t,T,x(t-T),u)= \frac{1}{2}\mathrm{d}^2_{\xi_2}l(t-T,t,x(t-T),x(t-T),u).
\end{align}

Furthermore, since System \eqref{eq:general_dyn_continous_time} is regularly bounded at $x_0$ with horizon $T$ then there exist $R'>0$ and $L>0$ such that for any $t\geq T$, any $s\in [t-T,t]$ and any $\xi\in \widebar{B}(x(t-T),R')$, 
\begin{align}
    \Vert  \phi_f(s;t-T,\xi,u) \Vert \leq L.\label{eq:regular_bound_2}
\end{align}
From Lemma \ref{lem:regular_bound_diff} and Hypothesis \ref{as:three_times_diff},  there exist $L'>0$ such that for any $t\geq T$, any $s\in [t-T,t]$ and any $\xi\in \widebar{B}(x(t-T),R')$:
\begin{align}
    \max(\Vert  \Phi_f(s;t-T,\xi,u) \Vert,
      \Vert  \diff_{\xi} \Phi_f(s;t-T,\xi,u)\Vert,
       \Vert  \diff^2_{\xi_2} \Phi_f(s;t-T,\xi,u)\Vert) \leq L'.\label{eq:regular_bound_diff_2}
\end{align}
From Assumptions \ref{as:three_times_diff}, $\mathrm{d}^2_{\xi_2}l(t-T,t,\cdot,\cdot,u)$ is continuously differentiable.  The differential of  $\mathrm{d}^2_{\xi_2}l(t-T,t,\cdot,\cdot,u)$ is denoted by  $\diff_{(\xi_1,\xi_2)}\mathrm{d}^2_{\xi_2}l(t-T,t,\cdot,\cdot,u)$. By combining Lemma \ref{lem:1_2_diff_l} with \eqref{eq:regular_bound_2}, \eqref{eq:regular_bound_diff_2} and Hypotheses \ref{as:three_times_diff} and \ref{as:U_compact}, one gets for any $0<R\leq R'$ that:
\begin{align}
L(R)=\sup_{t\geq T}\sup_{(\xi_1,\xi_2)\in (\widebar{B}(x(t-T),R))^2}\Vert \mathrm{d}_{(\xi_1,\xi_2)}\mathrm{d}^2_{\xi_2}l(t-T,t,\xi_1,\xi_2,u) \Vert<+\infty.\label{eq:third_diff_sup}
\end{align}
Additionally, for any $0<R\leq R'$, any $(\xi_1,\xi_2)\in (\widebar{B}(x(t-T),R))^2$, by combining \eqref{eq:hessian_lower_bound_ref_proof}, \eqref{eq:third_diff_sup} and the mean value theorem applied to  $\mathrm{d}^2_{\xi_2}l(t-T,t,\cdot,\cdot,u)$ between $(x(t-T),x(t-T))$ and $(\xi_1,\xi_2)$,  one gets:
\begin{align}
    \Vert \mathrm{d}^2_{\xi_2}l(t-T,t,\xi_1,\xi_2,u)-&\mathrm{d}^2_{\xi_2}l(t-T,t,x(t-T),x(t-T),u)\Vert\leq\notag\\ 
    & L(R)(\Vert \xi_1- x(t-T) \Vert^2+\Vert \xi_2- x(t-T) \Vert^2)^{\frac{1}{2}},\notag\\
      \Vert \mathrm{d}^2_{\xi_2}l(t-T,t,\xi_1,\xi_2,u)-&\mathrm{d}^2_{\xi_2}l(t-T,t,x(t-T),x(t-T),u)\Vert\leq\sqrt{2}L(R)R. \label{eq:hessian_upper_bound_mean_value_th}
\end{align}
Combining, \eqref{eq:hessian_lower_bound_ref_proof}, \eqref{eq:hessian_upper_bound_mean_value_th} and applying the reverse triangle inequality yields
$\mathrm{d}^2_{\xi_2}l(t-T,t,\xi_1,\xi_2,u)\succeq (2\mu'-\sqrt{2}L(R)R )I_{n_x}$.
Since $L(R)$ is non increasing with $R$ from \eqref{eq:third_diff_sup}, there exist $R>0$ such that $\mu=2\mu'-\sqrt{2}L(R)R>0$. Finally, this means that, there exist $T>0$, $R>0$ and $\mu>0$ such that for any $t\geq T$ $(\xi_1,\xi_2)\in (\widebar{B}(x(t-T),R))^2$ and $\mathrm{d}^2_{\xi_2}l(t-T,t,\xi_1,\xi_2,u)\succeq \mu I_{n_x}$. Hence, the result is proven.

$ \ref{item:lower_bound_hess_lem}\Rightarrow\ref{item:lower_bound_grammian_obs_lem}$:
Take $(\xi_1,\xi_2)=(x(t-T),x(t-T))$ in \eqref{eq:hessian_lower_bound_lem}.
\end{proof}

\section{Proof of Proposition \ref{prop:converse_hessian_lower_bound_weak_reg_per}}\label{app:converse_hessian_lower_bound_weak_reg_per}

\begin{proof}
Under the assumptions of the proposition and from Definition \ref{def:finite_sens} and, there exist $T>0$, $R'>0$, $L>0$ and a $\mathcal{K}$-function $\kappa$ such that for any $t\geq T$, $s\in [t-T,T]$ and $(\xi_1,\xi_2)\in (\widebar{B}(x(t-T),R'))^2$, $l(t-T,t,\xi_1,\xi_2,u)\geq\kappa(\Vert \xi_1 -\xi_2 \Vert)$,
%$\mu'= \inf_{\Vert \xi \Vert \leq R} \frac{\kappa(\Vert \xi \Vert)}{\Vert \xi \Vert^2}>0$,
and $\Vert  \phi_f(s;t-T,\xi,u) \Vert \leq L$. By using the same proof technique as in the proof of Proposition \ref{prop:hessian_pos_definite_weak_per} in Appendix \ref{app:hessian_pos_definite_weak_per}, one gets that there exists $\mu'>0$ such that:
$\mathrm{d}^2_{\xi_2}l(t-T,t,x(t-T),x(t-T),u)\succeq \mu' I_{n_x}$. Finally, from Lemma \ref{lem:obs_gram_hess_regular}, there exist $R>0$ and $\mu>0$ such that for any $t\geq T$, $(\xi_1,\xi_2)\in (\widebar{B}(x(t-T),R))^2$, $\mathrm{d}^2_{\xi_2}l(t-T,t,\xi_1,\xi_2,u)\succeq \mu I_{n_x}$, thus proving the result.
\end{proof}

\section{Differential of $\tilde{l}$}\label{app:xi_v_diff_l_perturbed}

\begin{lemma}\label{lem:xi_v_diff_l_perturbed}
For any $T>0$ and any $t\geq T$, $\xi\in \mathbb{R}^{n_y}$, any input trajectory $u$ and any perturbation signal $\eta=(v,w)\in \Theta_{t,T}$,  $\tilde{l}(t-T,t,x(t-T),\cdot,u,\eta)$ is continuously differentiable in $\mathbb{R}^{n_x}$  and $\diff_{\xi} \tilde{l}(t-T,t,x(t-T),\cdot,u,\cdot)$ is  continuously differentiable in $\mathbb{R}^{n_x}\times\Theta_{t,T}$. Additionally, under Hypothesis \ref{as:three_times_diff}, $\diff^2_{\xi}\tilde{l}(t-T,t,x(t-T),\cdot,u,\cdot)$ is continuously differentiable in $\mathbb{R}^{n_x}\times\Theta_{t,T}$.

The respective differentials read for any $x_0\in \mathbb{R}^{n_x}$  any $T>0$, any $t\geq T$, any $\xi \in \mathbb{R}^{n_x}$, any $\eta=(v,w)\in \Theta_{t,T}$ and any $\Delta\eta=(\Delta v,\Delta w)\in \Theta_{t,T}$:
\begin{align}
    &\diff_{\xi} \tilde{l}(t-T,t,\xi,u,\eta)=2\int_{t-T}^{t}(h(\hat{x}(s,\xi),u(s))-h(\tilde{x}(s,w),u(s))-v(s))^T H\Phi_f(s,\xi)\mathrm{d}s,\label{eq:1_xi_diff_l_perturbed} 
\end{align}
where $\hat{x}(s,\xi)=\phi_f(s;t-T,\xi,u)$, $H\Phi_f(s,\xi)=H(\hat{x}(s,\xi),u(s))\Phi_f(s;t-T,\xi,u)$, and $\tilde{x}(s,w)$ is defined as in \eqref{eq:general_perturbed_dyn_continous_time};
\begin{align}
    &\diff^2_{\xi}\tilde{l}(t-T,t,\xi,u,\eta) = 2\mathcal{C}(t,T,\xi,u)+2\widetilde{\mathcal{R}}(t,T,x(t-T),\xi,u,\eta),\label{eq:2_xi_xi_diff_l_perturbed}
\intertext{where for any $(\Delta \xi,\Delta' \xi) \in (\mathbb{R}^{n_x})^2$:} 
   &\Delta \xi^T\widetilde{\mathcal{R}}(t,T,x(t-T),\xi,u,\eta)\Delta' \xi=\notag\\
    &\int_{t-T}^{t}(h(\hat{x}(s,\xi),u(s))-h(\tilde{x}(s,w),u(s))-v(s))^T(\diff_{\xi}H\Phi_f(s,\xi)\Delta' \xi)\cdot \Delta\xi \mathrm{d}s.
\end{align}
The differential of $\diff_{\xi} \tilde{l}$ with respect to $\eta$ reads: 
\begin{align}
&\diff_{\eta}\diff_{\xi} \tilde{l}(t-T,t,\xi,u,\eta)\Delta \eta=\notag \\
&\diff_{v}\diff_{\xi} \tilde{l}(t-T,t,\xi,u,\eta)\Delta v +\diff_{w}\diff_{\xi} \tilde{l}(t-T,t,\xi,u,\eta)\Delta w,\\
    &\diff_{v}\diff_{\xi} \tilde{l}(t-T,t,\xi,u,\eta)\cdot\Delta v=2\int_{t-T}^{t} H\Phi_f^T(s,\xi) \Delta v(s) \mathrm{d}s,\label{eq:2_v_xi_diff_l_perturbed}\\
    &\diff_{w}\diff_{\xi} \tilde{l}(t-T,t,\xi,u,\eta)\cdot\Delta w=2\int_{t-T}^{t} H\Phi_f^T(s,\xi) (H(\tilde{x}(s,w),u(s))\diff_{w}\tilde{x}(s,w) \Delta w) \mathrm{d}s.\label{eq:2_w_xi_diff_l_perturbed}
\end{align}
\end{lemma}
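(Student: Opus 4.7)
The plan is to write
\begin{align*}
\tilde{l}(t-T,t,\xi,u,\eta) = \int_{t-T}^{t}\bigl\Vert r(s,\xi,\eta)\bigr\Vert^{2}\,\diff s,
\end{align*}
where $r(s,\xi,\eta) = h(\tilde{x}(s,w),u(s)) + v(s) - h(\hat{x}(s,\xi),u(s))$ and $\hat{x}(s,\xi) = \phi_f(s;t-T,\xi,u)$, and then to compute each differential by combining Leibniz's rule (differentiation under the integral sign) with the chain rule. Continuity of the resulting differentials will in each case reduce to continuity of the integrand in $(\xi,\eta)$ uniformly in $s\in[t-T,t]$, which follows from local boundedness of $h$, $H$, $\phi_f$, $\Phi_f$, $\tilde{x}$, and $\diff_{w}\tilde{x}$ on bounded sets and from a standard dominated-convergence argument, using Lemmas \ref{lem:bounded_x_bar} and \ref{lem:diff_process_noise} and Theorem 2.3.2 in \cite{bressan_introduction_2007}.

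First I would handle the $\xi$-differential. Invoking the differentiability of $\phi_f$ in its initial condition (Theorem 2.3.2 in \cite{bressan_introduction_2007}, already used in Lemma \ref{lem:1_2_diff_l}), the chain rule gives $\diff_{\xi} r(s,\xi,\eta) = -H\Phi_{f}(s,\xi)$, and expanding $\diff_{\xi}\Vert r\Vert^{2} = 2\,r^{\top}\diff_{\xi}r$ inside the integral yields \eqref{eq:1_xi_diff_l_perturbed}. Iterating this procedure for $\diff_{\xi}^{2}\tilde{l}$ via the product rule produces two terms: the quadratic term $2\int (H\Phi_{f})^{\top}(H\Phi_{f})\,\diff s = 2\mathcal{C}(t,T,\xi,u)$ coming from differentiating $\diff_{\xi} r$ in the dual factor, and the residual term $2\widetilde{\mathcal{R}}$ coming from differentiating $H\Phi_{f}$ in $\xi$. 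The latter step is why Hypothesis \ref{as:three_times_diff} is needed: it ensures that $\Phi_{f}$ itself is continuously differentiable in $\xi$ (so $\diff_{\xi}\Phi_{f}$ exists and is continuous), exactly as in the proof of Lemma \ref{lem:regular_bound_diff}.

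Next I would differentiate $\diff_{\xi}\tilde{l}$ with respect to $\eta=(v,w)$. The dependence on $v$ is affine inside the norm, so $\diff_{v}r\cdot\Delta v = \Delta v(s)$ pointwise and \eqref{eq:2_v_xi_diff_l_perturbed} follows by inspection. For $w$, the key input is Lemma \ref{lem:diff_process_noise}, which gives the Fréchet differentiability of $\tilde{x}(s,\cdot)$ in $L_{\infty}([0,t],\mathbb{R}^{n_{x}})$ together with an explicit expression for $\diff_{w}\tilde{x}$ as the solution of the linearised system \eqref{eq:diff_x_perturbed_w}. Composing with the chain rule on $h$ yields $\diff_{w}r\cdot\Delta w = H(\tilde{x}(s,w),u(s))\,\diff_{w}\tilde{x}(s,w)\Delta w$, and plugging into $\diff_{w}\diff_{\xi}\tilde{l}\cdot\Delta w = 2\int r^{\top}\diff_{w}(\,\cdot\,)\diff s$ after symbolic manipulation gives \eqref{eq:2_w_xi_diff_l_perturbed}. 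Summing the two contributions delivers $\diff_{\eta}\diff_{\xi}\tilde{l}$.

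The main obstacle I anticipate is rigorously justifying the Fréchet differentiability in the infinite-dimensional variable $w\in L_{\infty}([0,t],\mathbb{R}^{n_{x}})$ together with continuity of the resulting differential on the product space $\mathbb{R}^{n_{x}}\times\Theta_{t,T}$. While $\tilde{x}(s,\cdot)$ is Fréchet differentiable for each fixed $s$, exchanging differentiation with the integral over $s$ requires a uniform-in-$s$ bound on $\diff_{w}\tilde{x}(s,w)$ and on its modulus of continuity in $w$; both follow by applying Gronwall's lemma to the linear Cauchy problem \eqref{eq:diff_x_perturbed_w} on the bounded interval $[t-T,t]$ and using Hypothesis \ref{as:U_compact}. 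The continuity of $\diff_{\xi}^{2}\tilde{l}$ on $\mathbb{R}^{n_{x}}\times \Theta_{t,T}$ is handled analogously but with the $C^{3}$ regularity of $f$ and $h$ from Hypothesis \ref{as:three_times_diff}, which propagates to joint continuity of $\diff_{\xi}\Phi_{f}$ and of $\diff_{x}H$ on bounded sets and thereby controls $\widetilde{\mathcal{R}}$.
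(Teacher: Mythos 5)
Your proposal is correct and follows essentially the same route as the paper's proof: differentiation under the integral sign combined with the chain rule for the $\xi$-derivatives (exactly as in Lemma \ref{lem:1_2_diff_l}), the affine dependence of the residual on $v$ to obtain \eqref{eq:2_v_xi_diff_l_perturbed}, Lemma \ref{lem:diff_process_noise} together with the Banach-space Leibniz rule for the $w$-derivative, and direct integral estimates for the continuity of the resulting differentials on $\mathbb{R}^{n_x}\times\Theta_{t,T}$. The only slight inaccuracy is the role you assign to Hypothesis \ref{as:three_times_diff}: the existence and continuity of $\diff_{\xi}\Phi_f$, hence of $\widetilde{\mathcal{R}}$ and of formula \eqref{eq:2_xi_xi_diff_l_perturbed}, already follow from the standing $C^2$ assumption on $f$ and $h$, and Hypothesis \ref{as:three_times_diff} is needed only for the final claim that $\diff^2_{\xi}\tilde{l}$ is itself continuously differentiable.
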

\begin{proof} 
Equations \eqref{eq:1_xi_diff_l_perturbed} and \eqref{eq:2_xi_xi_diff_l_perturbed} can be obtained in the same way as \eqref{eq:1_diff_l} and \eqref{eq:2_diff_l} in Lemma \ref{lem:1_2_diff_l}. To get \eqref{eq:2_v_xi_diff_l_perturbed},first notice from $\eqref{eq:1_xi_diff_l_perturbed}$ that $\diff_{\xi} \tilde{l}(t-T,t,x(t-T),\xi,u,v)$ is affine in $v$. Secondly, note that for any $T>0$, any $t\geq T$, $\xi \in \mathbb{R}^{n_x}$ and any  $\Delta v\in L_\infty([t-T,t],\mathbb{R}^{n_y})$:
\begin{align}
    \left\Vert 2\int_{t-T}^{t} H\Phi_f^T(s,\xi) \Delta v(s) \mathrm{d}s \right\Vert &\leq K_1(t,\xi)  \Vert \Delta v\Vert_{[t-T,t],\infty},\label{eq:continuity_v_diff_l_perturbed}\\
\end{align}
where $ 0\leq K_1(t,\xi)=2T\sup_{s\in[t-T,t]} \Vert H\Phi_f^T(s,\xi) \Vert <+\infty\\$

From \eqref{eq:continuity_v_diff_l_perturbed}, one gets that $J(t,\xi):\Delta v \rightarrow   2\int_{t-T}^{t} H\Phi_f^T(s,\xi) \Delta v(s) \mathrm{d}s$ is continuous on $L_\infty([t-T,t],\mathbb{R}^{n_y})$ and \eqref{eq:2_v_xi_diff_l_perturbed} follows. Furthermore, concerning the continuity of $J(t,\cdot)$, fix $T>0$, $t\geq T$. For any $(\xi,\Delta \xi) \in (\mathbb{R}^{n_x})^2$ and any  $ v\in L_\infty([t-T,t],\mathbb{R}^{n_y})$ such that $\Vert v\Vert_{[t-T,t],\infty}=1$, $\vert J(t,\xi +\Delta \xi)\cdot v-J(t,\xi)\cdot v\vert\leq \int_{t-T}^{t} \Vert H\Phi_f^T(s,\xi+\Delta \xi)-H\Phi_f^T(s,\xi)\Vert \mathrm{d}s$. By denoting $\Vert \cdot \Vert$, the operator norm of bounded linear form on $L_\infty([t-T,t],\mathbb{R}^{n_y})$, one gets that:
    \begin{align}
     \Vert J(t,\xi +\Delta \xi)-J(t,\xi)\Vert\leq& \int_{t-T}^{t} \Vert H\Phi_f^T(s,\xi+\Delta \xi)-H\Phi_f^T(s,\xi)\Vert \mathrm{d}s.\label{eq:cont_J}
\end{align}
Consequently, $\lim_{\Delta_\xi \rightarrow 0} \Vert J(t,\xi +\Delta \xi)-J(t,\xi)\Vert=0$, by the theorem of continuity under integral and continuity of $H\Phi_f^T(s,\cdot)$. This means that $J(t,\cdot)$ and thus $(\xi,v)\rightarrow J(t,\xi)$ are continuous. This further proves that $\diff_{v}\diff_{\xi} \tilde{l}(t-T,t,x(t-T),\cdot,u,\cdot)$ is continuous on $\mathbb{R}^{n_x}\times \Theta_{t,T}$. 
One can notice that, in \eqref{eq:1_xi_diff_l_perturbed}, $\diff_{\xi} \tilde{l}(t-T,t,\xi,u,\eta)$ only depends on $w$ through $\tilde{x}(s,w)$. Then, by removing constant terms with respect to w in \eqref{eq:1_xi_diff_l_perturbed} and adapting the differentiation rule under the integral sign on Banach space in Example 2.4.16 of \cite{abraham_manifolds_1988} to the case of integrands that are only  piece-wise continuous in time, one gets \eqref{eq:2_w_xi_diff_l_perturbed} and for any $t\geq T$, $\xi \in \mathbb{R}^{n_x}$ and any  $\Delta v\in L_\infty([t-T,t],\mathbb{R}^{n_y})$:
\begin{align}
    \left\Vert \int_{t-T}^{t} H\Phi_f^T(s,\xi) (H(\tilde{x}(s,w),u(s))\diff_{w}\tilde{x}(s,w) \Delta w) \mathrm{d}s \right\Vert &\leq K_2(t,\xi,w)  \Vert \Delta w\Vert_{[0,t],\infty}\label{eq:continuity_w_diff_l_perturbed}
\end{align}
where $ 0\leq K_2(t,\xi,w)=2T\sup_{s\in[t-T,t]} \Vert H\Phi_f^T(s,\xi)\Vert \Vert H(\tilde{x}(s,w),u(s)) \Vert \Vert\diff_{w}\tilde{x}(s,w) \Vert.$

As far as  the continuity of $\diff^2_{\xi}\tilde{l}(t-T,t,x(t-T),\cdot,u,\cdot)$ is concerned, for any  $(\xi,\Delta \xi, \Delta \xi') \in (\mathbb{R}^{n_x})^3$ and $\eta=(v,w) \in (\Theta)$, one has:

\begin{align}
    \diff^2_{\xi}\tilde{l}(\xi,\eta) =2\mathcal{C}(\xi)+\widetilde{\mathcal{R}}_1(\xi,v)+\widetilde{\mathcal{R}}_2(\xi,w),\label{eq:2_xi_xi_diff_cont_perturbed}
\end{align}
where:
\begin{align*}
    {\Delta \xi'}^T\widetilde{\mathcal{R}}_1(\xi,v)\Delta \xi&=-\int_{t-T}^{t}v(s)^T(\diff_{\xi}H\Phi_f(s,\xi)\Delta' \xi)\cdot \Delta\xi \mathrm{d}s,\\
    {\Delta \xi'}^T\widetilde{\mathcal{R}}_2(\xi,w)\Delta \xi&=\int_{t-T}^{t}(h(\hat{x}(s,\xi),u(s))-h(\tilde{x}(s,w),u(s)))^T(\diff_{\xi}H\Phi_f(s,\xi)\Delta' \xi)\cdot \Delta\xi \mathrm{d}s,
\end{align*}
and unnecessary dependencies have been removed. Besides, $\mathcal{C}(t,T,\cdot,u) $ is continuous on $\mathbb{R}^{n_x}$  from Lemma \ref{lem:1_2_diff_l}. Moreover, from Lemma \ref{lem:diff_process_noise}, and Example 2.4.16 of \cite{abraham_manifolds_1988}, $\widetilde{\mathcal{R}}_2$ is continuous on $\mathbb{R}^{n_x} \times L_\infty([0,t],\mathbb{R}^{n_x})$.

% \begin{align}
%     \Vert \diff^2_{\xi}\tilde{l}(\xi+\Delta \xi,\eta+\Delta \eta)-\diff^2_{\xi}\tilde{l}(\xi,\eta) \Vert &\leq 2\Vert\widetilde{\mathcal{R}}(\xi+\Delta \xi,\eta+\Delta \eta)-\widetilde{\mathcal{R}}(\xi,\eta)\Vert
%     &+2\Vert\mathcal{C}(\xi+\Delta \xi)-\mathcal{C}(\xi) \Vert, \notag
% \end{align}

Note that for any  $(\xi,\delta \xi) \in (\mathbb{R}^{n_x})^2$ and $(v,\delta v)\in ( (L_\infty([t-T,t]))^2$, one gets from \eqref{eq:2_xi_xi_diff_cont_perturbed}:
\begin{align}
    \Vert\widetilde{\mathcal{R}}_1(\xi+\delta \xi,v+\delta v)-\widetilde{\mathcal{R}}_1(\xi,v)\Vert&\leq
    \Vert\widetilde{\mathcal{R}}_1(\xi+\delta \xi,v+\delta v)-\widetilde{\mathcal{R}}_1(\xi,v+\delta v)\Vert \label{eq:cont_R_tilde}\\
    &+ \Vert\widetilde{\mathcal{R}}_1(\xi,v+\delta v)-\widetilde{\mathcal{R}}_1(\xi,v)\Vert.\notag
\end{align}
From \eqref{eq:2_xi_xi_diff_cont_perturbed}, one further gets:

\begin{align*}
     \Vert\widetilde{\mathcal{R}}_1(\xi+\delta \xi,v+\delta v)-&\widetilde{\mathcal{R}}_1(\xi,v+\delta v)\Vert \\
     &\leq \left(\int_{t-T}^{t}\Vert\diff_{\xi}H\Phi_f(s,\xi+\delta\xi)-\diff_{\xi}H\Phi_f(s,\xi)\Vert  \mathrm{d}s \right ) \Vert v+\delta v \Vert_{[t-T,t]},
     \end{align*}
 \begin{align*}
      \Vert\widetilde{\mathcal{R}}_1(\xi,v+\delta v)-\widetilde{\mathcal{R}}_1(\xi,v)\Vert \leq \left(\int_{t-T}^{t}\Vert\diff_{\xi}H\Phi_f(s,\xi)\Vert  \mathrm{d}s \right ) \Vert\delta v \Vert_{[t-T,t]},
\end{align*}
leading to  $\lim_{(\delta_\xi,\delta_v) \rightarrow 0}\Vert\widetilde{\mathcal{R}}_1(\xi+\delta \xi,v+\delta v)-\widetilde{\mathcal{R}}_1(\xi,v)\Vert=0$ by continuity of  $\diff_{\xi}H\Phi_f(s,\cdot)$ which proves that $\mathcal{R}_1$ is continuous on $\mathbb{R}^{n_x} \times L_\infty([t-T,t])$. Furthermore, $\mathcal{C}$, $\mathcal{R}_1$ and $\mathcal{R}_2$ are continuous on $\mathbb{R}^{n_x}\times \Theta_{t,T}$ when seen as function of $(\xi,\eta)$ which implies $\diff^2_{\xi}\tilde{l}(t-T,t,x(t-T),\cdot,u,\cdot)$ is continuous on $\mathbb{R}^{n_x}\times  \Theta_{t,T}$. 
Finally under Assumption \ref{as:three_times_diff}, $H\Phi_f(s,\cdot)$ is twice continuously differentiable and  one can show, from \eqref{eq:2_xi_xi_diff_cont_perturbed}, by reproducing analogous arguments that  $\diff^2_{\xi}\tilde{l}(t-T,t,x(t-T),\cdot,u,\cdot)$ is continuously differentiable on $\mathbb{R}^{n_x}\times \Theta_{t,T}$. 

\end{proof}

\section{Proof of Theorem \ref{th_weak_per_robustness}}\label{app:weak_per_robustness}

\begin{proof}
% Assume that $u$ is a weakly persistent input trajectory at $x_0$ and all the associated $\mathcal{K}$-functions $\kappa_t$ have finite sensitivity. 

% Sketch of proof:
% \begin{enumerate}
%   \item \label{step_hess_pos} Show that  there exists $T>0$ such that for any $t\geq T$, there exists $\nu'_t>0$  such that if $\Vert v \Vert_{\infty,[t-T,t]}\leq \nu'_t$ and for any $\xi \in \widebar{B}(x(t-T),R_t)$
%     \begin{align*}
%      \mathrm{d}^2_{\xi}\tilde{l}(t-T,t,\xi,x(t-T),u,v)\succ 0.
%     \end{align*}
%     \item \label{step_loc_sol}  Show that for any $t\geq T$, there exists $\nu_t>0$ such that if $\Vert v \Vert_{\infty,[t-T,t]}\leq \nu_t$, then Problem \eqref{pb:receding_perturbed_horizon_gen_state} has a locally unique solution $\xi_t^*(v)$ such that $\xi_t^*(v) \in \widebar{B}(x(t-T),R_t)$ (Implicit function Theorem maybe with infinite dimension) 
 
%   \item Show that if $\Vert v \Vert_{\infty,[t-T,t]}\leq \min(\nu_t,\nu'_t)$, then \eqref{pb:receding_perturbed_horizon_gen_state} has a local solution in $\widebar{B}(x(t-T),R_t)$ from Step \ref{step_loc_sol} that is unique on $\widebar{B}(x(t-T),R_t)$ as $\tilde{l}(t-T,t,\cdot,x(t-T),u,v)$ is strictly convex from on $\widebar{B}(x(t-T),R_t)$ from Step \ref{step_hess_pos}.

% \end{enumerate}

   Assume that $u$ is a weakly persistent input trajectory at $x_0$ and all the associated $\mathcal{K}$-functions $\kappa_t$ have finite sensitivity. Then, by Corollary \ref{cor:grammian_pos_definite_weak_per}, there exists $T>0$ such that for any $t\geq T$
\begin{align}
     \mathcal{C}(t,T,x(t-T),u)\succ 0. 
\end{align}
In the sequel, we denote by $\mu_t$ the smallest eigenvalue of $\mathcal{C}(t,T,x(t-T),u)$: From Lemma \eqref{lem:xi_v_diff_l_perturbed}, one can see that  $\diff^2_{\xi}\tilde{l}(t-T,t,x(t-T),\cdot,u,\cdot)$ is continuous on $\mathbb{R}^{n_x}\times \Theta_{t,T} $ and that for any $t\geq T$:
\begin{align*}
    \diff^2_{\xi}\tilde{l}(t-T,t,x(t-T),u,0)=2 \mathcal{C}(t,T,x(t-T),u)\succeq 2\mu_t.
\end{align*}
Therefore, by continuity, for any $t\geq T$ there exist $\nu^{(1)}_t>0$ and $R_t>0$ such that for any $\xi\in \widebar{B}(x(t-T),R_t)$ and any $\eta \in B_{t,\infty}(0,\nu^{(1)}_t>0)$:
\begin{align}
    \diff^2_{\xi}\tilde{l}(t-T,t,\xi,u,\eta)\succeq \mu_t\succ 0.\label{eq:hess_pos_def_proof_robust}
\end{align}
From Lemma \ref{lem:cost_MHE_state} and \ref{lem:xi_v_diff_l_perturbed}, one gets that  for any $t\geq T$:
\begin{align}
    \diff_{\xi} \tilde{l}(t-T,t,x(t-T),u,0)=\diff_{\xi_2}{l}(t-T,t,x(t-T),x(t-T),u)=0.
\end{align}
Furthermore, Lemma \ref{lem:xi_v_diff_l_perturbed} shows that $ \diff_{\xi} \tilde{l}(t-T,t,\cdot,u,\cdot)$ is continuously differentiable on $\mathbb{R}^{n_x}\times \Theta_{t,T}$ and \eqref{eq:hess_pos_def_proof_robust} proves that $\diff^2_{\xi}\tilde{l}(t-T,t,x(t-T),u,0)$ is invertible. The Implicit Function Theorem on Banach spaces, see Theorem 3.13 in \cite{pathak_introduction_2018}, states that for any $t\geq T$, there exist $\nu^{(2)}_t>0$, $0<R'_t\leq R_t$ and a unique continuously differentiable function $\xi^*_t: B_{t,\infty}(0,\nu^{(2)}_t) \rightarrow \widebar{B}(x(t-T),R'_t)$ such that $\xi^*(0)=x(t-T)$ and for any $\eta\in B_{t,\infty}(0,\nu^{(2)}_t)$, $\xi^*_t(\eta) \in \widebar{B}(x(t-T),R'_t)$ and:
\begin{align}
     \diff_{\xi} \tilde{l}(t-T,t,\xi^*_t(\eta),u,\eta)=0.\label{eq:implicit_first_order_condition}
\end{align}

Set $\nu_t=\min(\nu^{(1)}_t\nu^{(2)}_t)$. Then, the differential of $\xi^*_t$  reads  for any $\eta\in B_{t,\infty}(0,\nu_t)$:
\begin{align}
    \diff_\eta \xi^*_t(\eta)=(\diff^2_{\xi}\tilde{l}(t-T,t,\xi^*_t(\eta),u,\eta))^{-1}\diff_{\eta}\diff_{\xi} \tilde{l}(t-T,t,\xi^*_t(\eta),u,\eta), \label{eq:v_diff_solution_perturbed_weak_per}
\end{align}
where the inverse of $\diff^2_{\xi}\tilde{l}(t-T,t,x(t-T),\xi^*_t(\eta),u,\eta)$ is ensured to exist by \eqref{eq:hess_pos_def_proof_robust}.
 By combining \eqref{eq:hess_pos_def_proof_robust} and \eqref{eq:implicit_first_order_condition}, one has that, for any $t\geq T$ and any  $\eta\in B_{t,\infty}(0,\nu_t)$:
\begin{align}
    &\diff_{\xi} \tilde{l}(t-T,t,\xi^*_t(\eta),u,\eta)=0, &\diff^2_{\xi}\tilde{l}(t-T,t,\xi^*_t(\eta),u,\eta)\succ 0.\label{eq:implicit_first_second_order_condition}
\end{align}
Note that  \eqref{eq:implicit_first_second_order_condition} implies that $\xi^*_t(\eta)$ is a strict local solution of Problem \ref{pb:receding_perturbed_horizon_gen_state}. Moreover, \eqref{eq:hess_pos_def_proof_robust} implies that, for any $t\geq T$, and any  $\eta\in B_{t,\infty}(0,\nu_t)$, $\tilde{l}(t-T,t,\cdot,u,\eta)$ is strictly convex on $\widebar{B}(x(t-T),R_t)$. Since $\xi^*_t(\eta) \in \widebar{B}(x(t-T),R_t)$  for any $\eta \in B_{t,\infty}(0,\nu_t)$, then $\xi^*_t(\eta)$ is the only local solution of Problem \eqref{pb:receding_perturbed_horizon_gen_state} on $\widebar{B}(x(t-T),R_t)$. Finally, to prove that \eqref{eq:error_solution_perturbed_weak_per} holds, we can combine \eqref{eq:hess_pos_def_proof_robust} and \eqref{eq:v_diff_solution_perturbed_weak_per} to get that, for any $t\geq T$ and any $\eta\in B_{t,\infty}(0,\nu_t)$:
\begin{align}
    \Vert   \diff_\eta \xi^*_t(\eta) \Vert \leq \frac{1}{2\mu_t} \Vert \diff_{\eta}\diff_{\xi} \tilde{l}(t-T,t,\xi^*_t(\eta),u,\eta)\Vert,\label{eq:bound_v_diff_solution_perturbed}
\end{align}
  From \eqref{eq:continuity_v_diff_l_perturbed} and \eqref{eq:continuity_w_diff_l_perturbed} in the proof of Lemma \ref{lem:xi_v_diff_l_perturbed}, for any $\Delta\eta=(\Delta v,\Delta w) \in \Theta_{t,T}$

  \begin{align}
      \Vert  \diff_{\eta}\diff_{\xi} \tilde{l}(t-T,t,\xi^*_t(\eta),u,\eta)\Delta \eta \Vert &\leq\Vert \label{eq:diff_l_eta_1} \diff_{v}\diff_{\xi} \tilde{l}(t-T,t,\xi^*_t(\eta),u,\eta)\Delta v \Vert \notag\\& 
      +\Vert  \diff_{w}\diff_{\xi} \tilde{l}(t-T,t,\xi^*_t(\eta),u,\eta)\Delta w \Vert\\
    \Vert  \diff_{v}\diff_{\xi} \tilde{l}(t-T,t,\xi^*_t(\eta),u,\eta)\Delta v \Vert &\leq C_{1,t} \Vert   \Delta v \Vert_{\infty,[t-T,t]},\label{eq:diff_l_eta_2}\\
         \Vert\diff_{w}\diff_{\xi} \tilde{l}(t-T,t,\xi^*_t(\eta),u,\eta)\Delta v \Vert &\leq C_{2,t}\Vert \Delta w \Vert_{\infty,[t-T,t]},\label{eq:diff_l_eta_3}
\end{align}
where for any $s\in [t-T,t]$ and any $\xi \in \mathbb{R}^{n_x}$, $H\Phi_f(s,\xi)=H(\phi_f(s;t-T,\xi,u),u(s))\Phi_f(s;t-T,\xi,u)$ and:
\begin{align*}
    C_{1,t}&= 2T\left( \sup_{s\in[t-T,t]} \sup_{\eta \in  B_{t,\infty}(0,\nu_t)} \Vert H\Phi_f^T(s,\xi^*_t(\eta)) \Vert \right), \\
    C_{2,t}&=2T\left(\sup_{s\in[t-T,t]}\sup_{\eta \in  B_{t,\infty}(0,\nu_t)} \Vert H\Phi_f^T(s,\xi^*_t(\eta))\Vert \Vert H(\tilde{x}(s,w),u(s)) \Vert \Vert\diff_{w}\tilde{x}(s,w) \Vert \right).\\
\end{align*}    
    %  Since for $\Vert v \Vert_{\infty,[t-T,t]}\leq \nu_t$, $\xi^*_t(v) \in \widebar{B}(x(t-T),R_t)$, one gets that: 
    % \begin{align}
    %     &\Vert  \diff_{v}\diff_{\xi} \tilde{l}(t-T,t,x(t-T),\xi^*_t(v),u,v)\cdot \Delta v \Vert \leq  K'_t\Vert  \Delta v \Vert_{\infty,[t-T,t]},\label{eq:bound_norm_v_xi_diff}
    % \end{align}
    % where $K'_t= 2T\sup_{s\in[t-T,t]} \sup_{\xi \in \widebar{B}(x(t-T),R_t)} \Vert H\Phi_f^T(s,\xi) \Vert$. 
Note that $ C_{1,t} <+\infty$ since $H$, $\phi_f(\cdot;t-T,\cdot,u)$ and $\Phi_f(\cdot;t-T,\cdot,u)$ are continuous,  $u$ is assumed to be piecewise continuous, and for any $\eta \in  B_{t,\infty}(0,\nu_t), \xi^*_t(\eta) \in \widebar{B}(x(t-T),R_t)$. Besides, from Lemma \ref{lem:bounded_x_bar}  and \ref{lem:diff_process_noise}:
\begin{align*}
    \sup_{s\in [t-T,t]} \sup_{\Vert w\Vert_{\infty,[t-T,t]}\leq \nu_t } \Vert \tilde{x}(s,w) \Vert &<+\infty,\\
    \sup_{s\in [t-T,t]} \sup_{\Vert w\Vert_{\infty,[t-T,t]}\leq \nu_t } \Vert \diff_{w}\tilde{x}(s,w) \Vert &<+\infty.
\end{align*}
Thus, one has $C_{2,t} <+\infty$ since $\Vert w\Vert_{\infty,[t-T,t]} \leq \Vert \eta \Vert_{t,T} $ by definition. Finally, from \eqref{eq:diff_l_eta_1}- \eqref{eq:diff_l_eta_3}, one has for any $\Delta\eta=(\Delta v,\Delta w) \in \Theta_{t,T}$:
\begin{align*}
     \Vert  \diff_{\eta}\diff_{\xi} \tilde{l}(t-T,t,\xi^*_t(\eta),u,\eta)\Delta \eta \Vert \leq C_t\Vert\Delta\eta \Vert, 
     \intertext{where $C_t=C_{1,t}+C_{2,t}$, which leads to:}
    \Vert  \diff_{\eta}\diff_{\xi} \tilde{l}(t-T,t,\xi^*_t(\eta),u,\eta)\Vert \leq C_t, 
\end{align*}
 
Consequently, from \eqref{eq:bound_v_diff_solution_perturbed} , one gets  for any $t\geq T$ and any $v\in B_{t,\infty}(0,\nu_t)$, $\Vert \diff_\eta \xi^*_t(\eta) \Vert \leq K_t$, where $K_t=\frac{C_t}{2\mu_t}$. Further applying the mean value theorem to $\xi^*_t$ between $0$ and $\eta$ for  $\Vert \eta \Vert_{t,T}\leq \nu_t$, results in $ \Vert\xi^*_t(\eta)-x(t-T) \Vert\leq K_t \Vert \eta \Vert_{t,T}$,which proves \eqref{eq:error_solution_perturbed_weak_per}.
% \begin{align*}
%  \Vert\xi^*_t(v)-x(t-T) \Vert&\leq \left( \sup_{\Vert v \Vert_{\infty,[t-T,t]}\leq \nu_t}  \Vert \diff_v \xi^*_t(v) \Vert \right) \Vert v \Vert_{\infty,[t-T,t]},\\

% \end{align*}
\end{proof}

\section{Proof of Proposition \ref{prop:uni_implicit_function_th}}\label{app:uni_implicit_function_th}
We start by stating a useful lemma to determine an upper bound on the norm of the inverse of a linear operator.  
\begin{lemma} \label{lem:inverse_operator} 
Let X and Y be two normed vector spaces. Let $A:X\rightarrow Y$ be a continuous linear operator and $c>0$. Set $R=image(A)\subset Y$. The following are equivalent:
\begin{enumerate}[label=(\roman*)]
    \item $A^{-1}:R\rightarrow X$ exists and $\Vert A^{-1}\Vert \leq \frac{1}{c}$;
    \item for any $x\in X$, $\Vert Ax\Vert\geq c\Vert x \Vert$;
\end{enumerate}
\end{lemma}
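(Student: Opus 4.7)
The plan is to prove the two implications separately; both are short and follow by chasing the definition of operator norm, with injectivity of $A$ as the only non-cosmetic point.

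First I would prove $\text{(i)} \Rightarrow \text{(ii)}$. Assume $A^{-1}: R \to X$ exists and $\Vert A^{-1} \Vert \leq 1/c$. Fix any $x \in X$ and set $y = Ax \in R$. Then by definition $x = A^{-1} y$, and applying the operator norm bound gives
\begin{equation*}
\Vert x \Vert = \Vert A^{-1} y \Vert \leq \Vert A^{-1} \Vert \cdot \Vert y \Vert \leq \tfrac{1}{c} \Vert A x \Vert,
\end{equation*}
which rearranges to $\Vert A x \Vert \geq c \Vert x \Vert$.

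Next I would prove $\text{(ii)} \Rightarrow \text{(i)}$. Assume $\Vert A x \Vert \geq c \Vert x \Vert$ for all $x \in X$. The key observation is injectivity: if $A x = 0$, then $0 = \Vert A x \Vert \geq c \Vert x \Vert$ with $c > 0$ forces $x = 0$, so $\ker A = \{0\}$. Consequently $A : X \to R$ is a bijection and $A^{-1} : R \to X$ is well-defined (and linear). Then, for any $y \in R$, writing $y = A x$ for the unique preimage $x$, I get
\begin{equation*}
\Vert A^{-1} y \Vert = \Vert x \Vert \leq \tfrac{1}{c} \Vert A x \Vert = \tfrac{1}{c} \Vert y \Vert,
\end{equation*}
so by the definition of the operator norm $\Vert A^{-1} \Vert \leq 1/c$.

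There is essentially no main obstacle here: the argument is a direct unwinding of the operator norm and of what it means for $A$ to have a left inverse on its image. The only subtle point worth flagging is that $R = \operatorname{image}(A)$ is not assumed closed and $A^{-1}$ is only defined on $R$, not on all of $Y$; but since the estimate in (ii) only needs to be checked pointwise on $R$, this causes no issue and no completion or extension argument is required.
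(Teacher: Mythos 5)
Your proof is correct and follows essentially the same route as the paper: the $(i)\Rightarrow(ii)$ direction is identical, and for $(ii)\Rightarrow(i)$ the paper simply cites Section 2.7, Problem 7 of Kreyszig, whose standard solution is exactly the injectivity-plus-pointwise-bound argument you spell out. No gaps.
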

\begin{proof}

$(i)\Rightarrow (ii)$: Fix $x\in X$, and set $y=Ax$. By $(i)$, $\Vert A^{-1}y\Vert\leq \frac{1}{c} \Vert y  \Vert $. Thus,   $\Vert x \Vert\leq \frac{1}{c} \Vert Ax \Vert $ and $(ii)$ follows.

$(ii)\Rightarrow (i)$: See Section 2.7, Problem 7 in \cite{kreyszig_introductory_1978}.
\end{proof}
We can now show Proposition \ref{prop:uni_implicit_function_th}.
\begin{proof}[Proof of Proposition \ref{prop:uni_implicit_function_th} ]
Assume that \ref{as:admissible_ball}-\ref{as:condition_delta_eps_proof} hold. For any $t\in J$ and $(x,y)\in S_t$, set $G(t,x,y)=y-\Gamma_tF(t,x,y)$. From \ref{as:cont_diff_proof}, one has that for $t\in J$, $G(t,\cdot,\cdot)$ is continuously differentiable and that for any  $(x,y)\in S_t$:
\begin{align}
    \diff_y G(t,x,y)&=I_Y -\Gamma_t \diff_y F(t,x,y),\notag\\
                    &=-\Gamma_t(\diff_y F(t,x,y)-\diff_y F(t,x_{0,t},y_{0,t})),\notag\\
    \Vert \diff_y G(t,x,y)\Vert&\leq \Vert\Gamma_t \Vert \Vert \diff_y F(t,x,y)-\diff_y F(t,x_{0,t},y_{0,t})\Vert. \notag
    \intertext{From \ref{as:bounded_inverse_proof}, \ref{as:lip_non_decreasing_proof} and \ref{as:condition_delta_eps_proof}, one gets that:}
    \Vert \diff_y G(t,x,y)\Vert&\leq  Lg_1(\Vert x-x_{0,t} \Vert,\Vert y-y_{0,t}\Vert),\notag\\
     \Vert \diff_y G(t,x,y)\Vert&\leq Lg_1(\delta,\epsilon)\leq \alpha <1. \label{eq:contraction_proof}
\end{align}
Besides, for any $t\in J$ and any  $(x,y)\in S_t$:
\begin{align}
    \Vert G(t,x,y) -y_{0,t}\Vert &\leq  \Vert G(t,x,y) -G(t,x,y_{0,t})\Vert +\Vert G(t,x,y_{0,t}) -y_{0,t}\Vert, \notag\\
    \Vert G(t,x,y) -y_{0,t}\Vert &\leq  \Vert G(t,x,y) -G(t,x,y_{0,t})\Vert +\Vert \Gamma_t F(t,x,y_{0,t})\Vert.\notag
     \end{align}
    The Mean Value Theorem on Banach spaces (see Theorem 3.2 in  \cite{pathak_introduction_2018}) and \eqref{eq:contraction_proof} yield:
\begin{align}
    \Vert G(t,x,y) -y_{0,t}\Vert &\leq  \alpha \Vert y -y_{0,t}\Vert +\Vert \Gamma_t F(t,x,y_{0,t})\Vert .\notag
    \intertext{From \ref{as:bounded_inverse_proof}, \ref{as:lip_non_decreasing_2_proof} and \ref{as:condition_delta_eps_proof}:}
    \Vert G(t,x,y) -y_{0,t}\Vert &\leq  \alpha \Vert y -y_{0,t}\Vert +Lg_2(\Vert x-x_{0,t} \Vert),\notag\\
    \Vert G(t,x,y) -y_{0,t}\Vert &\leq  \alpha \epsilon +Lg_2(\delta),\notag\\
    \Vert G(t,x,y) -y_{0,t}\Vert &\leq  \alpha \epsilon +(1-\alpha)\epsilon=\epsilon.\label{eq:mapping_ball_proof} 
\end{align}

Then, \eqref{eq:contraction_proof} and \eqref{eq:mapping_ball_proof} imply that for any $t\in J$ and $x\in {B}(x_{0,t},\delta)$, $G(t,x,\cdot)$ is a contraction from $\widebar{B}(y_{0,t},\epsilon)$ to itself. From the Fixed Point Theorem on Banach spaces (see Proposition 3.1 in \cite{pathak_introduction_2018}) and for any $t\in J$,  there exists a unique continuous function $\phi_t: {B}(x_{0,t},\delta) \rightarrow \widebar{B}(y_{0,t},\epsilon)$ such that for any  $x\in {B}(x_{0,t},\delta)$:
\begin{align*}
&y_{0,t}=\phi_t(x_{0,t}), &F(t,x,\phi_t(x))=0,
\end{align*}
which proves \ref{eq:phi_ref_proof} and \ref{eq:implicit_proof}. To show that $\phi_t$ is continuously differentiable and that \ref{eq:diff_phi} holds, we first show that for  $(x,y)\in S_t$, $\diff_yF(t,x,y)$ is invertible. To do so, notice that from the reverse triangle inequality and for any $h \in Y$:
\begin{align}
    \Vert\diff_yF(t,x,y)\cdot h \Vert &\geq   \Vert\Gamma_t^{-1}\cdot h \Vert  - \Vert\diff_yF(t,x,y)\cdot h-\diff_yF(t,x_{0,t},y_{0,t})\cdot h \Vert \notag.
   \end{align}
  Using \ref{as:bounded_inverse_proof}, \ref{as:lip_non_decreasing_proof}, and Lemma \ref{lem:inverse_operator} applied to $\Gamma_t$, one gets $\Vert\diff_yF(t,x,y)\cdot h \Vert \geq \left( \frac{1}{L} -g_1(\delta,\epsilon) \right)\Vert h\Vert$. From \ref{as:condition_delta_eps_proof}, $\frac{1}{L} -g_1(\delta,\epsilon)>0$ so by Lemma \ref{lem:inverse_operator}, $\diff_yF(t,x,y)$ is invertible and for any $(x,y)\in S_t$, $\Vert(\diff_yF(t,x,y))^{-1}\Vert \leq \frac{L}{1-Lg_1(\delta,\epsilon)}$. Since $\phi_t(x)\in\widebar{B}(y_{0,t},\epsilon)$, then $\Vert(\diff_yF(t,x,\phi_t(x)))^{-1}\Vert \leq \frac{L}{1-Lg_1(\delta,\epsilon)}$,
and the rest of the proof follows from that of Theorem 3.13 in \cite{pathak_introduction_2018}.
\end{proof}

\section{Proof of Theorem \ref{th_weak_reg_per_robustness}}\label{app:th_weak_reg_per_robustness}
\begin{proof}

Assume that  Hypotheses \ref{as:three_times_diff},\ref{as:U_compact} and \ref{as:bounded_diff_x_tilde_w} hold and that $u$ is a weakly regularly persistent input trajectory at $x_0$ with an associated $\mathcal{K}$-function $\kappa$ that has finite sensitivity and an associated time horizon $T$ such that System \eqref{eq:general_dyn_continous_time} is regularly bounded at $x_0$ with horizon $T$. Then, by Corollary \ref{cor:grammian_bound_definite_weak_reg_per}, there exists $\mu>0$ such that for any $t\geq T$:
\begin{align}
     \mathcal{C}(t,T,x(t-T),u)\succeq \frac{\mu}{2} I_{n_x}. \label{eq:weak_reg_per_lower_bound_gram_proof}
\end{align}
Furthermore, from Hypothesis \ref{as:three_times_diff} and by Lemma \ref{lem:regular_bound_diff}, there exist $R'>0$, $L'>0$ such that for any $t\geq T$, any $s\in [t-T,t]$ and any $\xi\in \widebar{B}(x(t-T),R')$, 
\begin{align}
   \max(\Vert  \phi_f(s;t-T,\xi,u) \Vert, \Vert  \Phi_f(s;t-T,\xi,u) \Vert, 
      \Vert  \diff_{\xi} \Phi_f(s;t-T,\xi,u)\Vert ,
       \Vert  \diff^2_{\xi} \Phi_f(s;t-T,\xi,u)\Vert)  \leq L'.\label{eq:regular_bound_diff_proof}
\end{align}

Note that since, $x(t)=\phi_f(s;t-T,x(t-T),u)$ then the reference trajectory is bounded meaning that:
   \begin{align}
        \sup_{t\geq 0}  \Vert {x}(t)\Vert <+\infty\label{eq:bounded_reference}.
   \end{align}

Thus, from Hypothesis \ref{as:bounded_diff_x_tilde_w}, \eqref{eq:bounded_reference} and  Lemma \ref{lem:boubnde_x_tilde_uniform}, there exists $\nu'>0$ such that for any $0<\nu <\nu'$:
      \begin{align}
       \sup_{t\geq 0}  \sup_{\Vert w\Vert_{\infty}\leq \nu } \Vert \tilde{x}(t,w) \Vert &<+\infty,\label{eq:bounded_x_perturbed_proof}\\
       \sup_{t\geq 0}  \sup_{\Vert w\Vert_{\infty}\leq \nu } \Vert \diff_w\tilde{x}(t,w) \Vert &<+\infty.\label{eq:bounded_diff_w_x_perturbed_proof}
  \end{align}

From Lemma \ref{lem:xi_v_diff_l_perturbed}, for any $t\geq T$, any $\xi\in\mathbb{R}^{n_x}$ and any $\eta\in \Theta $, 
\begin{align}
    \diff^2_{\xi}\tilde{l}(t-T,t,\xi,u,\eta) &= 2\mathcal{C}(t,T,\xi,u)+2\widetilde{\mathcal{R}}(t,T,x(t-T),\xi,u,\eta),\label{eq:diff_l_tile_proof_th}
    \intertext{where for any $(\Delta \xi,\Delta' \xi) \in (\mathbb{R}^{n_x})^2$:} 
   \Delta \xi^T\widetilde{\mathcal{R}}(t,T,x(t-T),\xi,u,\eta)\Delta' \xi&=\notag\\
    \int_{t-T}^{t}(h(\hat{x}(s,\xi),u(s))&-h(\tilde{x}(s,w),u(s))-v(s))^T(\mathrm{}{\xi}H\Phi_f(s,\xi)\Delta' \xi)\cdot \Delta\xi \mathrm{d}s.\notag
\end{align}
Moreover, From Hypotheses \ref{as:three_times_diff}, and Lemma \ref{lem:diff_process_noise}$,  \diff^2_{\xi}\tilde{l}(t-T,t,x(t-T),\cdot,u,\cdot)$ is continuously differentiable. We denote $\diff_{(\xi,\eta)}\diff^2_{\xi}\tilde{l}(t-T,t,\xi,u,\eta)$ , the differential of $ \diff^2_{\xi}\tilde{l}(t-T,t,x(t-T),\cdot,u,\cdot)$ at $(\xi,\eta)$ where $ \mathbb{R}^{n_x}\times \Theta$ is equipped with the norm $\Vert (\xi,\eta)\Vert=\Vert \xi\Vert +\Vert \eta \Vert$.

Therefore, from Hypothesis \ref{as:U_compact}, \eqref{eq:regular_bound_diff_proof},  \eqref{eq:bounded_x_perturbed_proof}, \eqref{eq:bounded_diff_w_x_perturbed_proof} and by expanding $\diff_{(\xi,\eta)}\diff^2_{\xi}\tilde{l}(t-T,t,\xi,u,\eta)$ from \eqref{eq:diff_l_tile_proof_th}, one gets that, for any $0<\nu<\nu'$ and $0<R<R'$:
\begin{align}
a_1(\nu,R):=\sup_{t\geq T}\sup_{\Vert \eta \Vert\leq \nu} \sup_{\xi \in\widebar{B}(x(t-T),R) }   \Vert \diff_{(\xi,\eta)}\diff^2_{\xi}\tilde{l}(t-T,t,\xi,u,\eta)\Vert <+\infty.
\end{align}
From the Mean Value Theorem one gets that, for any $t\geq T $, any $\xi\in \widebar{B}(x(t-T),R) $ and any $\eta\in  B_{\infty}(0,\nu)$:
\begin{align}
    &\Vert \diff^2_{\xi}\tilde{l}(t-T,t,\xi,u,\eta) -  2\mathcal{C}(t,T,x(t-T),u)\Vert\leq g_1(\Vert \xi-x(t-T)\Vert,\Vert \eta \Vert),\label{eq:lip_constant_xi_v_xi_xi_diff_4}
\end{align}
where for any $\delta>0$ and $\epsilon>0$, $g_1(\delta,\epsilon)=a_1(\nu,R)(\delta+\epsilon)$.
Besides, from \eqref{eq:2_v_xi_diff_l_perturbed} and \eqref{eq:continuity_v_diff_l_perturbed}, one has for any $0<R<R'$, any $t\geq T$ , any $\xi \in \widebar{B}(x(t-T),R)$ and any  $\eta\in \Theta$:
\begin{align*}
     \Vert\diff_{v}\diff_{\xi} \tilde{l}(t-T,t,\xi ,u,\eta)\Vert&\leq2\int_{t-T}^{t} \Vert H\Phi_f^T(s,\xi )\Vert  \mathrm{d}s,\label{eq:bound_diff_v_xi_l_perturbed}\notag
\end{align*}
Similarly from \eqref{eq:2_w_xi_diff_l_perturbed} and \eqref{eq:continuity_w_diff_l_perturbed}, one has for any $0<R<R'$, any $t\geq T$ , any $\xi \in \widebar{B}(x(t-T),R)$ and any  $\eta\in \Theta$:

\begin{align*}
    \Vert \diff_{w}\diff_{\xi} \tilde{l}(t-T,t,\xi,u,\eta)\Vert&\leq 2\int_{t-T}^{t} \Vert H\Phi_f^T(s,\xi) (H(\tilde{x}(s,w),u(s))\diff_{w}\tilde{x}(s,w)) \Vert \mathrm{d}s.
\end{align*}
 Lemma \ref{lem:xi_v_diff_l_perturbed} yields:
\begin{align}
    \Vert \diff_{\eta}\diff_{\xi} \tilde{l}(t-T,t,\xi,u,\eta)\Vert&\leq\label{eq:diff_l_pertubed_xi_eta}\\
    2\int_{t-T}^{t} &\Vert H\Phi_f^T(s,\xi)\Vert (1+\Vert (H(\tilde{x}(s,w),u(s))\diff_{w}\tilde{x}(s,w)) \Vert) \mathrm{d}s.\notag
    \intertext{In particular,}
       \Vert \diff_{\eta}\diff_{\xi} \tilde{l}(t-T,t,x(t-T),u,\eta)\Vert&\leq\notag\\
    2\int_{t-T}^{t} \Vert H\Phi_f^T(s,x(t-T))&\Vert (1+\Vert (H(\tilde{x}(s,w),u(s))\diff_{w}\tilde{x}(s,w)) \Vert) \mathrm{d}s,\notag
\end{align}

From \eqref{eq:regular_bound_diff_proof}, Hypothesis \ref{as:U_compact}, \eqref{eq:bounded_x_perturbed_proof} and \eqref{eq:bounded_diff_w_x_perturbed_proof},   one gets for any $\nu>0$ that:

\begin{align*}
    a_2(\nu):=2\sup_{t\geq T }\sup_{\Vert w \Vert_{\infty}\leq \nu}\int_{t-T}^{t} &\Vert H\Phi_f^T(s,x(t-T))\Vert (1+\Vert (H(\tilde{x}(s,w),u(s))\diff_{w}\tilde{x}(s,w)) \Vert) \mathrm{d}s<+\infty.
\end{align*}
 Thus, by recalling that  for any $t\geq T$:
\begin{align}
    \diff_{\xi} \tilde{l}(t-T,t,x(t-T),u,0)&=0,\label{eq:first_order_cond_proof}
\intertext{and one has, from the Mean Value theorem, for any $\nu>0$ and any $\eta\in  B_\infty(0,\nu)$:}
     \Vert\diff_{\xi} \tilde{l}(t-T,t,x(t-T),u,\eta)\Vert&\leq g_2(\Vert \eta\Vert),\label{eq:lip_constant_v_xi_diff_2}
\end{align}
where for any $\delta>0$, $g_2(\delta)=a_2(\nu)\delta$. We now fix $0<R<R'$, $\nu>0$ and $0<\alpha<1$, and assume that:
\begin{align}
       &\frac{g_1(\nu,R)}{\mu}\leq \alpha<1,&\frac{g_2(\nu)}{\mu}\leq R (1-\alpha).\label{eq:condition_radius_proof}
\end{align}
Applying Proposition \ref{prop:uni_implicit_function_th} with  $J=[T,+\infty[$, $X=\Theta$, $Y=\mathbb{R}^{n_x}$,  $Z=\mathbb{R}^{n_x}$ and $\Omega=\Theta\times\mathbb{R}^{n_x}$;
  one obtains that for any $t\geq 0$, $y_{0,t}=x(t-T)$ and $x_{0,t}=0$ and  $F=\diff_{\xi}\tilde{l}$;
 $\delta=\nu$, $\epsilon=R$ and $L=\frac{1}{\mu}$. Note that in this case, \ref{as:admissible_ball} is clear, \ref{as:equation_ref_proof} holds thanks to \eqref{eq:first_order_cond_proof}, \ref{as:cont_diff_proof} holds thanks to Lemma \ref{lem:xi_v_diff_l_perturbed} and \ref{as:bounded_inverse_proof} holds from \eqref{eq:weak_reg_per_lower_bound_gram_proof} with $\Gamma_t=\left( 2\mathcal{C}(t,T,x(t-T),u)\right)^{-1}$. Note also that \ref{as:lip_non_decreasing_proof} and \ref{as:lip_non_decreasing_2_proof} hold from \eqref{eq:lip_constant_xi_v_xi_xi_diff_4} and \eqref{eq:lip_constant_v_xi_diff_2} with $g_1$ and $g_2$ being variable-wise non-decreasing and vanishing at 0, and \ref{as:condition_delta_eps_proof} is ensured by \eqref{eq:condition_radius_proof}.Therefore, by Proposition \ref{prop:uni_implicit_function_th}, there is a unique continuously differentiable mapping $\xi^*_t:B_{\infty}(0,\nu)\rightarrow \widebar{B}(x(t-T),R)$ such that  for any $t\geq T $, $\xi^*_t(0)=x(t-T)$ and for any $\eta\in  B_{\infty}(0,\nu)$:
\begin{align}
      \diff_{\xi} \tilde{l}(t-T,t,x(t-T),\xi^*_t(\eta),u,\eta)&=0.\label{eq:first_order_condition_sol_perturbed}
      \intertext{Additionally, $\diff^2_{\xi}\tilde{l}(t-T,t,\xi^*_t(\eta),u,\eta)$ is invertible and its inverse satisfies:}
      \Vert (\diff^2_{\xi}\tilde{l}(t-T,t,\xi^*_t(\eta),u,\eta))^{-1}\Vert &\leq \frac{1}{\mu-g_1(\nu,R)}\label{eq:inverse_xi_xi_diff_l_perturbed_sol}.
\end{align}
Also, the differential of $\xi^*_t$  for any $\eta\in  B_{\infty}(0,\nu)$ is:
      \begin{align}
          \diff\xi^*_t(\eta)=(\diff^2_{\xi}\tilde{l}(t-T,t,\xi^*_t(\eta),u,\eta))^{-1}\diff_\eta\diff_{\xi}\tilde{l}(t-T,t,\xi^*_t(\eta),u,\eta).    \label{eq:diff_xi_star}
      \end{align}
  The uniqueness in Proposition \ref{prop:uni_implicit_function_th} ensures that  for any $t\geq T $, and for any $\eta\in  B_{\infty}(0,\nu)$,  $\xi^*_t(\eta)$ is the only element of $\widebar{B}(x(t-T),R)$ satisfying \eqref{eq:first_order_condition_sol_perturbed} and thus the only local solution of \eqref{pb:receding_perturbed_horizon_gen_state} in  $\widebar{B}(x(t-T),R)$. Furthermore, from Lemma \ref{lem:inverse_operator}, \eqref{eq:inverse_xi_xi_diff_l_perturbed_sol} ensures that  for any $t\geq T $, and for any $\eta\in  B_{\infty}(0,\nu)$:
  \begin{align}
      \diff^2_{\xi}\tilde{l}(t-T,t,\xi^*_t(\eta),u,\eta)\succeq (\mu-g_1(\nu,R))I_{n_x},
  \end{align}
 which implies that $\xi^*_t(\eta)$ is a strict local solution of \eqref{pb:receding_perturbed_horizon_gen_state}.
  
  Moreover,  \eqref{eq:diff_xi_star} and \eqref{eq:inverse_xi_xi_diff_l_perturbed_sol} also imply that for any $t\geq T $, and any $\eta\in  B_{\infty}(0,\nu)$:
  \begin{align}
      \Vert \diff\xi^*_t(\eta)\Vert \leq \frac{1}{\mu-g_1(\nu,R)} \Vert \diff_\eta\diff_{\xi}\tilde{l}(t-T,t,\xi^*_t(\eta),u,\eta)  \Vert. \label{eq:bound_diff_xi_star}
  \end{align}
  Hypothesis \ref{as:U_compact} and  \eqref{eq:regular_bound_diff_proof} and Lemma \ref{lem:bounded_x_bar} and \ref{lem:diff_process_noise} lead to  
 \begin{align*}
      &g_3(\nu,R):=\\
      &2\sup_{t\geq T }\sup_{\eta\in  B_{\infty}(0,\nu)}\sup_{\xi\in \widebar{B}(x(t-T),R)} \int_{t-T}^{t} \Vert H\Phi_f^T(s,\xi)\Vert(1+\Vert (H(\tilde{x}(s,w),u(s))\diff_{w}\tilde{x}(s,w)) \Vert) \mathrm{d}s<+\infty,
 \end{align*}
     and from \eqref{eq:diff_l_pertubed_xi_eta} and \eqref{eq:bound_diff_xi_star}, one has $  \Vert \diff\xi^*_t(\eta)\Vert \leq \frac{g_3(\nu,R)}{\mu-g_1(\nu,R)}$.
Finally, by the Mean Value Theorem, one gets that, for any $t\geq T $, and any $\eta\in B_{\infty}(0,\nu)$, $\Vert \xi_t^*(\eta)-x(t-T) \Vert \leq \frac{g_3(R)}{\mu-g_1(\nu,R)} \Vert\eta \Vert_{\infty}$ and the result is proven since $g_3$ is variable-wise non decreasing.
\end{proof}

\section{Proof of Proposition \ref{prop:ex_per}}\label{app:ex_per}

\begin{proof}

 \begin{enumerate}[label=\arabic*., labelindent=0pt]
 
    \item \textit{Radial constant input trajectory}
    Let $\sigma \in \mathbb{R}$. From \eqref{eq:flow_single_int} and \eqref{eq:ex_input_cst}, one gets for any $\xi \in \mathbb{R}^{n_x}$, any $T>0$, any $t\geq T$ and any $s\in [t-T,t]$:
     \begin{align}
         \phi(s;t-T,\xi ,u_{cst})&=\xi+\sigma(s-t+T)(\ell-x_0)\label{eq:ex_state_cst}.
    \end{align}
       Thus, for any $\xi \in \ell+\mathbb{R}(\ell-x_0)$, $\phi(s;t-T,\xi ,u_{cst}) \in \ell+\mathbb{R}(\ell-x_0)$ and $h(\phi(s;t-T,\xi ,u_{cst}))=\frac{\ell-x_0}{\Vert \ell -x_0 \Vert}$ which implies that for any $T>0$, any $t\geq T$ and  $\xi \in \ell+\mathbb{R}(\ell-x_0)$, $l(t,T,x_0,\xi,u_{cst})=0$.
     Since one can find vectors $\xi \in \ell+\mathbb{R}(\ell-x_0)$  arbitrarily close to $x_0$,  this implies, by Definition \ref{def:weakly_persistent_input}, that for any $\sigma\in \mathbb{R}$, $u_{cst}(\cdot,\sigma)$ is not a weakly persistent input trajectory of System \eqref{eq:dyn_1landmark_continuous_time} at $x_0$. Besides for any $s>0$ by choosing $T=s$ and $t=T$,, one gets $l(s,0,x_0,\xi,u_{cst})=0$, for any $\xi \in \ell+\mathbb{R}(\ell-x_0)$ According to Definition \ref{def:universal_input}, this also proves that for any $\sigma\in \mathbb{R}$, $u_{cst}(\cdot,\sigma)$ is not a universal input.
     
     \item  \textit{Circular input trajectory}
     Let $\omega>0$ and $r_c>0$. From \eqref{eq:flow_single_int} and \eqref{eq:ex_input_circ},  one gets for any $T>0$, any $t\geq T$ and any $s\in [t-T,t]$, that:
     \begin{align}
    &\phi(s;t-T,x_0 ,u_{circ})=\ell+ r_0\begin{bmatrix}
                                        \cos(\omega (s-t+T)+{\psi}_{t-T})\\
                                        \sin(\omega (s-t+T)+{\psi}_{t-T})
                                      \end{bmatrix},
                                      &r(s)=r_0>0,\label{eq:ex_state_circ}
     \end{align}
     where ${\psi}_{t-T}=\psi_0+\omega(t-T)$.
     For any $T>0$ and  any $t\geq T$, we denote by $\lambda_{+}^{circ}(t,T)$ and $\lambda_{-}^{circ}(t,T)$ the two eigenvalues of  $\mathcal{C}(t,T,\xi,u_{circ})$. Following straightforward but cumbersome computations, one gets $\lambda_{\pm}^{circ}(t,T)= \frac{1}{2r_0^2}\left[T\pm \frac{\vert \sin(\omega T)\vert}{\omega}\right]$. Since $\omega>0$, then $\vert \sin(\omega T)\vert< \omega T$ and for any $T>0$ and  any $t\geq T$, $ \lambda_{+}^{circ}(t,T)\geq\lambda_{-}^{circ}(t,T)>0$ and do not depend on $t$. Thus, $u_{circ}(\cdot,\omega,r_0)$ satisfies \eqref{eq:gram_bound_prop} in Corollary \ref{cor:grammian_bound_definite_weak_reg_per} for any $T>0$. Besides, from \eqref{eq:ex_state_circ}, for any $\xi \in \mathbb{R}^{n_x}$, $ \Vert  \phi(s;t-T,\xi ,u_{circ}) \Vert \leq \Vert \ell \Vert +\Vert \xi-\ell \Vert$. Thus, for any $T>0$, System \eqref{eq:dyn_1landmark_continuous_time} is regularly bounded at $x_0$ with horizon $T$. Moreover, $u_{circ}$ is valued in a compact set, satisfying Hypothesis \ref{as:U_compact} and System \ref{eq:dyn_1landmark_continuous_time} satisfies Hypothesis \ref{as:three_times_diff} as it is linear.
Therefore, by Corollary \ref{cor:grammian_bound_definite_weak_reg_per}, for any $\omega>0$ and $r_0>0$ $u_{circ}(\cdot,\omega,r_0)$ is a weakly regularly persistent input trajectory of System \ref{eq:dyn_1landmark_continuous_time} at $x_0$.
     \item  \textit{Outward spiral input trajectory}     Let $\omega>0$, $\alpha>0$ and  $r_0>0$.  From \eqref{eq:flow_single_int} and \eqref{eq:ex_input_spi},  one gets for any $T>0$, any $t\geq T$ and any $s\in [t-T,t]$, that:
     \begin{align}
   \phi(s;t-T,x_0 ,u_{spi})&=\ell+ r(t-T)\exp(\alpha(s-t+T))\begin{bmatrix}
                                        \cos(\omega (s-t+T)+{\psi}_{t-T})\\
                                        \sin(\omega (s-t+T)+{\psi}_{t-T})
                                      \end{bmatrix},\label{eq:ex_state_spi}
     \end{align}
     where $ r(t-T)=r_0 \exp(\alpha(t-T))$. For any $T>0$ and  any $t\geq T$, we denote by $\lambda_{+}^{spi}(t,T)$ and $\lambda_{-}^{spi}(t,T)$ the two eigenvalues of  $\mathcal{C}(t,T,\xi,u_{spi})$. Following again simple but cumbersome computations, $\lambda_{\pm}^{spi}(t,T)$ read for any $T>0$ and any $t\geq T$:
 \begin{align}
     \lambda_{\pm}^{spi}(t,T)=\frac{1}{4\alpha r(t-T)^2}\left[ \exp(2T\alpha)-1 \pm b(\alpha,\omega,T) \right],\label{eq:eigenvalue_spi}
 \end{align}
where $b(\alpha,\omega,T)=\frac{\alpha}{\sqrt{\alpha^2+\omega^2}}(\exp(4T\alpha)-2\exp(2T\alpha)\cos(2T\omega)+1)^{\frac{1}{2}}$. Since $\cos(2\omega  T)\geq -1$, one gets for any $T>0$ and any $t\geq T$ and from \eqref{eq:ex_state_spi}:
\begin{align}
    b(\alpha,\omega,T)&\leq \frac{\alpha}{\sqrt{\alpha^2+\omega^2}}(\exp(4T\alpha)+2\exp(2T\alpha)+1)^{\frac{1}{2}},\notag\\
   % \lambda_{-}^{spi}(t,T)&\geq \frac{1}{4\alpha r(t-T)^2}\left[\exp(2T\alpha)-1-\frac{\alpha}{\sqrt{\alpha^2+\omega^2}}(\exp(2T\alpha)+1) \right], \notag \\
     \lambda_{-}^{spi}(t,T)&\geq \frac{1}{4\alpha r(t-T)^2}\left[\left(1-\frac{\alpha}{\sqrt{\alpha^2+\omega^2}}\right)\exp(2T\alpha)-\left(1+\frac{\alpha}{\sqrt{\alpha^2+\omega^2}}\right) \right]. \label{eq:lower_bound_eigenvalue_spi}
\end{align}
Thus, from \eqref{eq:lower_bound_eigenvalue_spi},  for any $T>0$ and any $t\geq T$, if $T>\frac{1}{2\alpha}\ln\left(\frac{\sqrt{\alpha^2+\omega^2}+\alpha}{\sqrt{\alpha^2+\omega^2}-\alpha}\right)$, then $ \lambda_{-}^{spi}(t,T)>0$ and $\mathcal{C}(t,T,x_0,u_{spi})\succ 0$. Therefore, by Corollary \ref{cor:grammian_pos_definite_weak_per}, one gets that, for any $\omega>0$, any $\alpha>0$ and any $r_0>0$, $u_{spi}(\cdot,\omega,\alpha,r_0)$ is a weakly persistent input trajectory of System \eqref{eq:dyn_1landmark_continuous_time} at $x_0$. Furthermore, by \eqref{eq:ex_state_spi} and \eqref{eq:eigenvalue_spi}, $\lim_{t\rightarrow +\infty} \lambda_{+}^{spi}(t,T)=0 $ and $0\preceq   \mathcal{C}(t,T,x_0,u_{spi}) \preceq \lambda_{+}^{spi}(t,T) I_2$. This implies that $\lim_{t\rightarrow +\infty} \Vert  \mathcal{C}(t,T,\xi,u_{spi})\Vert=0$ and the second result is proven.
\end{enumerate}
\end{proof}

\bibliographystyle{siamplain}
\bibliography{bibfile}
\end{document}